\newcommand{\dequal}{\overset{\rm d}{=}}
\newcommand{\iid}{\stackrel{\mbox{\scriptsize iid}}{\sim}}
\newcommand{\ind}{\stackrel{\mbox{\scriptsize ind}}{\sim}}
\newcommand{\indicator}{\ensuremath{\mathbbm{1}}}
\newcommand{\calN}{\mathcal{N}}
\newcommand{\dd}{\mathrm d}
\newcommand{\plaw}{\mathbf{P}}
\newcommand{\Pp}{\mathsf{P}}
\newcommand{\X}{\mathbb{X}}
\newcommand{\M}{\mathbb{M}}
\newcommand{\N}{\mathbb{N}}
\newcommand{\E}{\mathsf{E}}
\newcommand{\R}{\mathbb{R}}
\newcommand{\prob}{\mathsf{P}}
\newcommand{\Acr}{\mathscr{A}}
\newcommand{\Xcr}{\mathscr{X}}
\definecolor{OliveGreen}{RGB}{85,107,47}
\NewDocumentCommand{\evalat}{sO{\big}mm}{%
  \IfBooleanTF{#1}
   {\mleft. #3 \mright|_{#4}}
   {#3#2|_{#4}}%
}
\renewcommand{\mid}{\ensuremath{\,|\,}}
\date{}
\title{Extended feature allocation models}
\author[1]{Mario Beraha}
\author[1]{Federico Camerlenghi}
\author[2]{Lorenzo Ghilotti}
\affil[1]{University of Milano--Bicocca, Department of Economics, Management, and Statistics, Piazza dell'Ateneo Nuovo 1, 20126 Milano, Italy}
\affil[2]{Duke University, Department of Statistical Science, 218 Old Chemistry, Durham, North Carolina 27708, U.S.A.}
\affil[ ]{\begin{footnotesize} mario.beraha@unimib.it, federico.camerlenghi@unimib.it (corresponding author), lorenzo.ghilotti@duke.edu \end{footnotesize}}
\providecommand{\keywords}[1]{
  \small 
  \textbf{\textit{Keywords:}} #1
  \normalsize
}
\newtheorem{theorem}{Theorem}
\newtheorem{corollary}{Corollary}
\newtheorem{lemma}{Lemma}
\newtheorem{proposition}{Proposition}
\newtheorem{remark}{Remark}
\theoremstyle{definition}
\theoremstyle{remark}
\newtheorem{example}{Example}
\begin{document}

\maketitle

\begin{abstract}
Feature allocation models are Bayesian nonparametric tools tailored to  data in which each observation can simultaneously exhibit multiple characteristics, or features. 
A fundamental limitation of standard formulations is that feature labels are assumed to be independent and identically distributed, and therefore play no role in posterior inference.
The present paper introduces a unified Bayesian framework for \emph{extended feature allocation models}, in which feature labels and proportions are modeled jointly, thereby enabling the simultaneous discovery of features and learning of dependencies among their labels. Building on point process theory, we develop a full Bayesian analysis of these models.
Within this general setting, we also characterize previously proposed priors as those leading to poor predictive distributions, which cannot capture label dependencies and are insensitive to the observed frequency spectrum. Our methodology is designed to move beyond such standard formulations by leveraging the information carried by feature labels. We demonstrate the usefulness of our approach by introducing: (i) a Cox process prior that clusters genomic variant embeddings while predicting new variants and new variant clusters; (ii) a determinantal point process prior for repeated forest surveys, where prediction concerns both the number and the locations of unobserved trees.
\end{abstract}

\keywords{Indian buffet process, Bayesian nonparametrics, predictive characterizations, exchangeable feature probability function, point processes}

\section{Introduction}

\subsection{Background and motivation}

Since the introduction of the celebrated Indian buffet process (\textsc{ibp}) by \citet{Gha05}, feature allocation models \citep{Bro(13)} have become a cornerstone of Bayesian nonparametrics, with applications spanning image segmentation \citep{Tit(07), Gri(11), Hu(12), Bro(14)}, text mining \citep{Thi(07), Teh09}, matrix factorization \citep{Zho(12), zhou2016priors}, network analysis \citep{Mil(09), ayed2021nonnegative}, ecology \citep{dunsonstolf24}, and microbiome studies \citep{james2025poisson}.
In these models, each observation is represented by a finite set of ``characteristics'' or ``features'', which may occur with positive probability across different observations. We refer to the total occurrences of a feature as its frequency.
The connection between the \textsc{ibp} and beta-process priors in \citet{Thi(07)} paved the way to many generalizations, including priors that induce power-laws \citep{Teh09, Bro(13)} and general likelihoods \citep{Zho(12), zhou2016priors, Hea(16)}. See \cite{broderick_exponential} and \cite{Jam(17)} for comprehensive treatments under completely random measure (\textsc{crm}) priors.

A persistent limitation is that most feature allocation models treat the feature labels as essentially irrelevant, being  independent and identically distributed (i.i.d.) from a diffuse distribution. As a consequence, prior information can influence feature frequencies, but not dependence structures among the labels themselves. In many applications this is too restrictive. For instance, genomic variants may come with embeddings that encode functional or evolutionary similarity \citep[as suggested in][]{niu2025incorporating}; spatial locations may exhibit inhibition or repulsion; and latent features in image segmentation or factor models often benefit from regularizing dependence, as in repulsive mixture models \citep[see, e.g.,][and the references therein]{beraha2024}. These settings require models that can learn both from the  allocation of features across observations and from dependence among feature labels. To this end, we use  point processes to jointly model  feature labels and feature probabilities, allowing dependence among labels to directly inform posterior and predictive inference.

Prediction problems provide a natural setting in which the limitations of standard feature allocation models
become particularly apparent.
A canonical example is the \emph{unseen feature problem}: given an observed sample of size $n$ displaying $K_n$ distinct features,  the aim is to predict the number of new features appearing in an additional sample of size \(m\).
This task is a natural generalization of the classical \textit{unseen species problem} \citep[see, e.g.][for contributions both from the frequentist and Bayesian standpoint]{good1956number, orlitsky2016optimal, lijoi2007bayesian, favaro2012}. Applications of feature allocations model to the unseen feature problem can be found in genomic studies \citep{Mas(22), Cam(23), She(22)}, user activity prediction in online A/B testing \citep{beraha2025online}, and ecology \citep{ghilotti2024bayesian}. 
\citet{Jam(17)} showed that under \textsc{crm} priors the probability of discovering new features depends solely on the sample size $n$, making the predictive law insensitive to other relevant sampling information. This can be restrictive in prediction problems involving feature discovery  when the goal is not only to predict how many new features will appear, but also where they lie, which clusters they belong to, or how they relate to previously observed features. More generally, these examples highlight that the predictive rule induced by a prior is a key consideration in model specification.
This motivates the study of characterizations of feature allocation priors in terms on  how the observed sampling statistics enter the prediction rule, providing a principled basis for prior elicitation. 
In other words, one aims to identify sufficientness postulates in the spirit of those developed for species sampling models \citep{Pit96, Zab82}, which have led to characterizations of the Dirichlet process \citep{Reg(78), Lo(91)}, the Pitman--Yor process \citep{Pit(97), Zab05}, and Gibbs-type priors \citep{DeBlasi, Bac(17)}. 
By contrast, analogous predictive characterizations for feature allocation models remain limited, with the only contributions of \citet{Cam(22), Cam(23)}, who developed such characterizations within the class of scaled process priors \citep{Jam(15)}.

\subsection{Our contributions}

The present paper offers a unified Bayesian framework for \emph{extended feature allocation models}, in which feature labels and feature probabilities are modeled jointly. Relaxing the i.i.d. assumption on labels allows the model to capture interactions among features---such as repulsion and attraction---as well as arbitrary dependence structures among feature probabilities. More importantly, it expands the scope of inference beyond feature discovery, the primary objective of standard feature allocation models, to include learning dependence structures among labels and exploiting such information for prediction. To make this framework operational, we derive explicit expressions for the marginal, posterior, and predictive distributions of extended feature allocation models. These results provide the key ingredients for Bayesian posterior analysis when feature labels are themselves objects of inference.
Our theory holds without imposing specific assumptions on the prior distribution, requiring only a modest increase in analytical complexity compared with standard feature methods, such as those in \citet{Jam(17), broderick_exponential, ghilotti2024bayesian}, which are recovered as special cases.
Our analysis and proofs rely heavily on point process theory and Palm calculus, pioneered in Bayesian nonparametrics by \cite{Jam(02), Jam(05), Jam(06), Jam(17)}.

Within this general framework, we establish two sufficientness postulates that characterize the priors whose predictive law for unseen features depends only on the sample size, or only on the sample size and the number of distinct features. These results are the feature allocation analogue of the sufficientness postulates available for species sampling models. All standard feature allocation models, in which labels are irrelevant, fall within the classes characterized by these postulates. Our characterizations serve a dual purpose. On the one hand, they offer practical guidance for prior elicitation in  standard feature allocation settings, i.e., when the labels are irrelevant for inferential tasks and the unseen feature problem is the natural predictive target \citep{Cam(22), Cam(23)}. On the other hand, they also formally identify the limitations of standard models: any prior aiming to leverage informative labels in prediction must lie outside the classes characterized here.

Moving beyond these characterizations, we focus on the more compelling setting in which feature labels themselves carry relevant information for posterior inference. To this end, we develop two novel classes of models that capture qualitatively different forms of dependence among labels. We first introduce a prior based on cluster Cox processes to encode clustering among labels, and apply it to genomic variant data from patients affected by Glioblastoma, where each variant is associated with an embedding as in \cite{niu2025incorporating}. In this application, each patient is represented by the genomic variants it displays, which are the features, and each variant is associated with a high-dimensional Euclidean vector (the embedding) that encodes external information about the variant. We demonstrate how the proposed framework enables simultaneous clustering of variants and prediction of unseen ones, both globally and within clusters. We then introduce a prior based on determinantal point processes \citep[\textsc{dpp}s;][]{Hou(06),Lav(15)} to encode repulsion among labels, and apply it to a spatial setting where features represent tree locations in a forest. This model can recover both the total number of trees and the locations of unobserved ones, thereby addressing a classical problem posed by \citet{ord78}.
Although the paper focuses on settings in which both feature-sharing patterns and feature labels are observed, these ideas have clear potential in problems where features are latent, such as image segmentation and latent feature modeling. In such contexts, incorporating repulsion in the prior over features may yield more realistic and interpretable inference, similarly to what has been observed for repulsive mixture models \citep[see, e.g.,][and the references therein]{PeRaDu12, xie2019bayesian, cremaschi2023repulsion, beraha2024}.

\subsection{Outline of the paper}

The paper is organized as follows. In Section \ref{sec:prevwork}, we define the framework of \emph{extended feature models} and review key notions of  point process theory.
In Section \ref{sec:bnp}, we develop a general Bayesian theory for extended feature allocation models, without imposing assumptions on the prior distribution. Section \ref{sec:predictive_characterization} presents the two sufficientness postulates. Section \ref{sec:sncp_model_and_application} introduces a novel class of models based on cluster Cox processes to address both clustering of labels and feature discovery, and explores its use in genomics. Section \ref{sec:dpp_model_and_application} introduces a second class of models based on determinantal point processes that encode repulsion among feature labels, and illustrates its use in spatial statistics.  The paper ends with a discussion. Proofs, additional results, and further details on the applications are collected in the supplementary material. The code is available online at the link: \url{https://github.com/LGhilotti/ExtendedFeatureModels}.

\section{Extended feature allocation models}\label{sec:prevwork}

\subsection{Background on point processes}\label{sec:back_point_processes}

We review here the key basic definitions on point processes needed in  the rest of the paper, adopting notation from \cite{BaBlaKa}. See Appendix \ref{app:point_process_definition} for further mathematical details.

Let $\X$ be a Polish space equipped with the corresponding Borel $\sigma$-algebra $\Xcr$, a point process $\Phi$ on $\X$ can be represented as $\Phi = \sum_{j\geq 1} \delta_{X_j}$, where $(X_j)_{j\geq 1}$ is a sequence of random variables taking values in $\X$, and  $\delta_{X_j}$ denotes the Dirac delta mass at $X_j$.
Examples of point processes previously considered in the context of feature models encompass the Poisson, mixed Poisson, and the mixed binomial processes. In particular,
given a locally finite measure $\nu$ on $(\X, \Xcr)$, we say that $\Phi$ is a Poisson process with rate measure $\nu$ if for any collection of disjoint sets $B_1, \ldots, B_k \in \Xcr$, the random variables $\Phi(B_1), \ldots, \Phi(B_k)$ are independent and Poisson distributed with parameters $\nu(B_j)$, $j=1, \ldots, k$. The number of atoms of $\Phi$ can be either finite or infinite, but in the context of feature models it is typically assumed infinite. For brevity, we will write $\Phi \sim \textsc{pp} (\nu)$ to denote the distribution of a Poisson process.
A mixed Poisson process is simply a Poisson process with a random rate measure: $\nu = \gamma \tilde \nu$ where $\tilde \nu$ is a deterministic measure and $\gamma$ is a positive random variable. 
Instead, we say that $\Phi$ is a mixed binomial process if $\Phi = \sum_{j=1}^M \delta_{X_j}$, where $M$ is an integer-valued random variable (a.s. finite) and the $X_j$'s are i.i.d. $\X$-valued variables.

The mean measure $M_\Phi$ of $\Phi$ is defined by $M_\Phi(B) = \E[\Phi(B)]$, for $B \in \Xcr$, and the \emph{$k$-th order factorial} moment measure $M_{\Phi}^{(k)}$ of $\Phi$ by
\[
    M_{\Phi}^{(k)}(B) = \E \sum_{(j_1, \ldots, j_k)}^{\not =} \delta_{(X_{j_1}, \ldots, X_{j_k})} (B), \qquad B \in \Xcr^{\otimes k}
\]
where $\Xcr^{\otimes k}$ is the product $\sigma$-field, and the symbol $\not =$ indicates that the sum is extended over all pairwise distinct indexes.
We also denote by $\{\plaw_{\Phi}^x\}_{x \in \X}$ the \emph{Palm kernel} of $\Phi$. The Palm kernel is a central object in the study of point processes, and it arises as the disintegration probability kernel of the so-called Campbell measure (see Appendix \ref{app:point_process_definition} for further details).
Here, we limit ourselves to note that the Palm kernel is an extension of the concept of regular conditional distribution to the case of point processes. A point process
$\Phi_{x}$ with distribution $ \plaw_{\Phi}^x$ is called the \textit{Palm version of $\Phi$ at $x\in \X$}, and it holds that $\Pp(\Phi_x(\{x\}) \geq  1) = 1$, i.e., $x$ is a trivial atom of $\Phi_x$. Informally, $\plaw_{\Phi}^x$ can be understood as the probability distribution of $\Phi$ given that $\Phi$ has an atom at $x$. 
As a consequence, one can define the point process $\Phi^!_{x}:  = \Phi_{x} - \delta_{x}$, which is called 
the \textit{reduced Palm version of $\Phi$ at $x \in \X$}, whose associated reduced Palm kernel is indicated by $\plaw_{\Phi}^{!x}$.
In a similar fashion, it is possible to extend the definition of Palm distributions to multiple conditioning points $\bm{x} = (x_1, \ldots, x_k) \in \X^k$. In this case, the Palm distribution of $\Phi$ at $\bm{x}$ is interpreted as the probability distribution of $\Phi$ conditionally to $\Phi$ having $k$ atoms at locations $x_1, \ldots, x_k$. See Appendix \ref{app:point_process_definition}.
For the remainder of the paper, we denote by $\X$ the space of all possible feature labels. Moreover, it is worth noting that all the point processes central to the subsequent statistical modeling are defined on the product space $\X \times (0,1]$.

\subsection{Model definition} \label{sec:model_definetti}

Let $(X_j)_{j \geq 1}$ be a sequence of $\X$-valued random variables, hereafter interpreted as ``feature labels''.
An extended feature allocation model is a stochastic model for observations $(Z_i)_{i \geq 1}$ such that each $Z_i$ is characterized by its expressed features, or equivalently by the sequence of pairs $((A_{i,j}, X_j))_{j \geq 1}$, where $A_{i,j}=1$ if the feature $X_j$ belongs to the $i$-th individual, $A_{i,j}=0$ otherwise. Equivalently,  we can represent each $Z_i$ as a point process on $\X$, namely
\begin{equation} \label{eq:Zi_random_measure}
    Z_i = \sum_{j \geq 1} A_{i,j} \delta_{X_j}.
\end{equation}
We assume that, for any $j \geq 1$, the $A_{i,j}$'s are conditionally i.i.d. Bernoulli variables with a strictly positive random probability $S_j$, which entails both exchangeability of the $Z_i$'s and regularity of the feature allocation \citep{Bro(13)}. Namely, no feature appears in only a single observation within $(Z_i)_{i\geq 1}$, or, equivalently, if a label is observed in some individual, then it is observed in infinitely many individuals with probability one.

The feature labels $X_j$'s and parameters $S_j$'s, i.e., the probability of displaying feature $X_j$, are collected in the simple point process on $\X \times (0,1]$ defined by
$\Psi: = \sum_{j \geq 1} \delta_{(X_j, S_j)}$
or in its functional 
\begin{equation}
    \label{eq:mu_definition}
    \mu(B) = \int_{\X \times (0,1]} s \indicator_B (x ) \Psi(\dd x \, \dd s), \quad B \in \Xcr,
\end{equation}
where $\indicator_B $ denotes the indicator function of the Borel set $B$.
We say that $Z_i$ in \eqref{eq:Zi_random_measure} is a Bernoulli process with parameter $\mu$, indicated as $Z_i \sim  \mathrm{BeP}(\mu)$. According to our point process formulation, we observe that  $Z_i$ in \eqref{eq:Zi_random_measure} is obtained by first thinning $\Psi$ with retention function $p(x,s) = s$ and then removing the second component.
Summing up, we are dealing with the following statistical model  
\begin{equation} \label{eq:representation_theorem}
    \begin{aligned}
        Z_i \mid \mu & \iid \mathrm{BeP}(\mu),\\
        \mu & \sim \mathscr Q,
    \end{aligned}
    \end{equation}
where $\mathscr Q$ is the law of $\mu$. In other terms, $(Z_i)_{i \geq 1}$ is a sequence of exchangeable observations with de Finetti measure $\mathscr Q$.  We call model \eqref{eq:representation_theorem} an \textit{extended feature allocation model}, where \emph{extended} refers to the joint modeling of feature labels and feature probabilities.

\section{Bayesian analysis of extended feature models}\label{sec:bnp}

We provide here the full Bayesian analysis of the class of extended feature allocation models.
We focus on the statistical model \eqref{eq:representation_theorem} to describe the marginal distribution of a sample, the predictive structure of the next observation, and the posterior representation of the underlying measure $\mu$.

Throughout the rest of the paper, we assume that the $k$-th factorial moment measures of $\Psi$, denoted as $M_{\Psi}^{(k)}$, are $\sigma$-finite for all $k \geq 1$. This is a very mild assumption (for instance, it is verified by all feature allocation models previously considered in the literature) and is essential for the existence of the $k$-th order Palm distributions. Furthermore, under this assumption, the measure disintegration theorem \cite[Theorem 3.4]{Kallenberg21} entails
\begin{equation}\label{eq:mpsi_disinteg}
    M_{\Psi}^{(k)}(\dd \bm x \, \dd \bm s) =  \rho^{(k)}(\dd \bm s \mid \bm x) \tilde m^{(k)}_{\xi}(\dd \bm x),
\end{equation}
where $\tilde m^{(k)}_{\xi}(\, \cdot \, )$ is a $\sigma$-finite measure \emph{equivalent} to $M_{\Psi}^{(k)}(\, \cdot \,  \times (0,1]^k)$, i.e., both measures are absolutely continuous with respect to each other, and $\rho^{(k)}$ is a kernel from $\X^k$ to $(0,1]^k$.

Remarkably, another common assumption for feature allocation models is that any subject displays a.s. a finite number of features, meaning $Z_i(\X) < \infty$ a.s.. Clearly, this property is always guaranteed when $\Psi$ has a.s. a finite number of points. If $\Psi$ has an infinite number of points, a sufficient condition is $\int_{\X\times (0,1]} s M_{\Psi}(\dd x \, \dd s) < \infty$. See Appendix \ref{app:results_extended_feature} for further considerations on this point.

\subsection{General formulas: marginal, posterior and predictive distributions}\label{sec:general_bayesian_analysis}
We start by describing the marginal distribution of a sample $\bm{Z}:= (Z_1,\ldots,Z_n)$ from an extended feature model as in \eqref{eq:representation_theorem}. The proofs are based on an application of Palm calculus \citep{BaBlaKa} and deferred to \Cref{app:proofs_main_theorems}.

\begin{theorem}\label{thm:marg}
    Let $\bm{Z}$ be a sample from the statistical model \eqref{eq:representation_theorem}, where $\mu$ is the functional of a point process $\Psi$ defined via \eqref{eq:mu_definition}.
    The  probability that $\bm{Z}$ displays $k$ features having labels $\bm x^* = (x^*_1, \ldots, x^*_k)$ with corresponding vector of  frequency counts $\bm{m}:= (m_1, \ldots, m_k)$ is
    \[
         \int_{(0, 1]^k} \E\left\{ e^{ \int_{\X \times (0, 1]} n \log(1 - t) \Psi^!_{\bm x^*, \bm s}(\dd z \, \dd t)} \right\}   \prod_{\ell=1}^k  s_\ell^{m_\ell} (1 - s_\ell)^{n - m_\ell} \rho^{(k)}(\dd \bm s \mid \bm x^*) \cdot  \tilde m_{\xi}^{(k)}(\dd \bm x^*),
    \]
    where $\rho^{(k)}(\dd \bm s \mid \bm x^*)$ and $\tilde m_{\xi}^{(k)}(\dd \bm x^*)$ are defined as in \eqref{eq:mpsi_disinteg}, and, for $\bm x^* \in \X^k$ and $\bm s \in (0, 1]^k$, $\Psi^!_{\bm x^*, \bm s}$ is the reduced Palm version of $\Psi$ at $((x_1^*, s_1), \ldots, (x_k^*, s_k))$.
\end{theorem}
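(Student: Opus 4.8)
The plan is to condition on the directing point process $\Psi$, express the conditional probability as a sum over ordered $k$-tuples of atoms of $\Psi$, and collapse that sum by a single application of the $k$-th order reduced Palm (Campbell--Mecke) formula; the disintegration \eqref{eq:mpsi_disinteg} then recasts the answer in the displayed form. The statement should be read as a density in $\bm x^*$ with respect to $\tilde m_\xi^{(k)}$, so concretely I would prove that, for any pairwise disjoint Borel sets $C_1,\ldots,C_k\subseteq\X$ and any feature allocation $(B_{n,1},\ldots,B_{n,k})$ with $|B_{n,\ell}|=m_\ell$, the probability of the event $E$ that $\bm Z$ expresses exactly $k$ features, one with label in each of $C_1,\ldots,C_k$ and the one in $C_\ell$ displayed precisely by the observations indexed by $B_{n,\ell}$, equals the integral of the displayed integrand over $\bm x^*\in C_1\times\cdots\times C_k$.

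\textbf{Step 1: conditioning on $\Psi$.} Given $\Psi=\sum_{j\ge1}\delta_{(X_j,S_j)}$, the array $(A_{i,j})$ is independent Bernoulli with $\E[A_{i,j}\mid\Psi]=S_j$, so for any atom the probability that feature $X_j$ is displayed by exactly a prescribed set of $m$ observations is $S_j^{m}(1-S_j)^{n-m}$ and the probability that it is displayed by no observation is $(1-S_j)^n$. Decomposing $E$ according to which atoms of $\Psi$ carry the $k$ observed labels, $E$ is the disjoint union, over ordered tuples of distinct indices $(j_1,\ldots,j_k)$, of the events that $X_{j_\ell}\in C_\ell$ and feature $X_{j_\ell}$ is displayed exactly by $B_{n,\ell}$ for every $\ell$, while every other atom of $\Psi$ is silent; disjointness holds because $m_\ell\ge1$ and the $C_\ell$ are disjoint (two distinct admissible tuples would force some atom to be simultaneously silent and displayed), and exhaustiveness by simplicity of $\Psi$. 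Conditional independence then yields
\[
\Pp(E\mid\Psi)=\sum_{(j_1,\ldots,j_k)}^{\neq}\ \prod_{\ell=1}^k \indicator_{C_\ell}(X_{j_\ell})\,S_{j_\ell}^{m_\ell}(1-S_{j_\ell})^{n-m_\ell}\ \prod_{j\notin\{j_1,\ldots,j_k\}}(1-S_j)^n,
\]
and I would rewrite the last product as $\exp\{n\int_{\X\times(0,1]}\log(1-t)\,(\Psi-\sum_{\ell=1}^k\delta_{(X_{j_\ell},S_{j_\ell})})(\dd z\,\dd t)\}$, i.e.\ as a functional of the configuration obtained from $\Psi$ by deleting the $k$ selected atoms.

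\textbf{Step 2: reduced Palm formula.} Taking expectations, $\Pp(E)=\E\big[\Pp(E\mid\Psi)\big]$ is precisely $\E\big[\sum^{\neq}_{(j_1,\ldots,j_k)} h\big((X_{j_1},\ldots,X_{j_k}),(S_{j_1},\ldots,S_{j_k}),\Psi-\sum_{\ell=1}^k\delta_{(X_{j_\ell},S_{j_\ell})}\big)\big]$, where $h$ is the nonnegative measurable kernel $h(\bm x,\bm s,\varphi)=\prod_{\ell=1}^k\indicator_{C_\ell}(x_\ell)\,s_\ell^{m_\ell}(1-s_\ell)^{n-m_\ell}\,\exp\{n\int\log(1-t)\,\varphi(\dd z\,\dd t)\}$. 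The $k$-th order reduced Campbell--Mecke identity \citep{BaBlaKa} --- available since $M_\Psi^{(k)}$ is $\sigma$-finite by our standing assumption, and needing no further integrability check since all integrands are nonnegative --- then gives
\begin{multline*}
\E\Big[\sum_{(j_1,\ldots,j_k)}^{\neq} h\big((X_{j_1},\ldots,X_{j_k}),(S_{j_1},\ldots,S_{j_k}),\,\Psi-\sum_{\ell=1}^k\delta_{(X_{j_\ell},S_{j_\ell})}\big)\Big]\\
=\int_{(\X\times(0,1])^k}\E\big[h(\bm x,\bm s,\Psi^!_{\bm x,\bm s})\big]\,M_\Psi^{(k)}(\dd\bm x\,\dd\bm s),
\end{multline*}
with $\Psi^!_{\bm x,\bm s}$ a point process having the reduced Palm distribution of $\Psi$ at $((x_1,s_1),\ldots,(x_k,s_k))$. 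Substituting $h$,
\[
\Pp(E)=\int_{(\X\times(0,1])^k}\ \prod_{\ell=1}^k \indicator_{C_\ell}(x_\ell)\,s_\ell^{m_\ell}(1-s_\ell)^{n-m_\ell}\ \E\Big\{ e^{\int_{\X\times(0,1]} n\log(1-t)\,\Psi^!_{\bm x,\bm s}(\dd z\,\dd t)}\Big\}\,M_\Psi^{(k)}(\dd\bm x\,\dd\bm s).
\]

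\textbf{Step 3: disintegration, and the main difficulty.} It remains to insert \eqref{eq:mpsi_disinteg}, namely $M_\Psi^{(k)}(\dd\bm x\,\dd\bm s)=\rho^{(k)}(\dd\bm s\mid\bm x)\,\tilde m_\xi^{(k)}(\dd\bm x)$, and to use Tonelli to carry out the $\dd\bm s$-integration inside; since $C_1,\ldots,C_k$ are arbitrary disjoint Borel sets, the function of $\bm x^*$ multiplying $\tilde m_\xi^{(k)}(\dd\bm x^*)$ is precisely the claimed integrand, which proves the theorem. I do not anticipate a genuinely deep obstacle: the argument is essentially careful bookkeeping, and its only delicate points are in Step 1 --- verifying the pairwise disjointness of the tuple-indexed events, so that $\Pp(E\mid\Psi)$ is an honest sum with no combinatorial multiplicities, and carrying the Borel localization $C_1,\ldots,C_k$ through the whole computation so that the final expression is legitimately identified as a density against $\tilde m_\xi^{(k)}$. (When $\tilde m_\xi^{(k)}$ has atoms, e.g.\ for discrete $\X$, this ``density'' is an actual probability; otherwise it is a genuine density, the two cases being unified by the displayed formula.)
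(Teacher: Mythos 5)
Your proposal is correct and follows essentially the same route as the paper: the paper likewise writes the conditional likelihood as an integral of $\prod_\ell s_\ell^{m_\ell}(1-s_\ell)^{-m_\ell}\delta_{x_\ell^*}(x_\ell)$ times $e^{n\int\log(1-t)\,\Psi(\dd z\,\dd t)}$ against the factorial power $\Psi^{(k)}$ (citing James 2017 for this step, which you instead derive directly from the conditional Bernoulli structure), then applies the $k$-th order Campbell--Little--Mecke formula and the disintegration \eqref{eq:mpsi_disinteg}. Your additional care in testing against disjoint Borel sets $C_1,\ldots,C_k$ and checking disjointness of the tuple-indexed events is a more explicit justification of the same bookkeeping the paper performs via the Dirac-delta/off-diagonal argument.
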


Theorem \ref{thm:marg} decomposes the marginal distribution into the contributions of the observed features and the hitherto unseen ones. The term \(\prod_{\ell=1}^k s_\ell^{m_\ell}(1-s_\ell)^{n-m_\ell}\) is  associated with the observed features, whereas the expectation involving the reduced Palm version summarizes the contribution of all features that have not yet appeared in the sample. This is precisely where interactions among feature labels enter the marginal distribution. For priors with i.i.d. labels, $\Psi^!_{\bm x^*, \bm s}$ does not depend on $\bm x^*$.
The law of the induced feature allocation is obtained by marginalizing out the labels $\bm x^*$ and dividing by $k!$. Such a law 
can always be expressed as a symmetric function of the frequency counts $m_1,\ldots,m_k$, which means that it admits the exchangeable feature probability function (\textsc{efpf}), defined by \cite{Bro(13)}. Moreover, all these extended feature models are regular by construction.

We proceed by describing the general posterior distribution of the point process $\Psi$, or equivalently the random measure $\mu$, given a sample of size $n$ from \eqref{eq:representation_theorem}.

\begin{theorem}\label{thm:post}
    Let $\bm{Z}$ be a sample from the statistical model \eqref{eq:representation_theorem}, where $\mu$ is the functional of a point process $\Psi$ defined via \eqref{eq:mu_definition}. Further, suppose that $\bm{Z}$ displays $k$ features having labels $\bm x^* = (x^*_1, \ldots, x^*_k)$ with corresponding vector of  frequency counts $\bm{m}:= (m_1, \ldots, m_k)$.
  Then, the posterior distribution  of $\mu$, conditionally on $\bm Z$, satisfies the distributional equality
   \begin{equation}\label{eq:post_mu_sum}
     \mu \mid \bm Z \dequal  \sum_{\ell=1}^{k} S^*_\ell \delta_{x^*_\ell} +  \mu^{\prime},
   \end{equation}
   where
   \begin{enumerate}
       \item[(i)] $\bm S^* := (S^*_1, \ldots, S^*_k)$ is a vector of positive random variables with joint distribution
       \[
        f_{\bm S^*}(\dd \bm s) \propto  \E\left\{e^{\int_{\X \times (0, 1]} n \log(1-t) \Psi^!_{\bm x^*, \bm s}(\dd z \, \dd t)}\right\} \prod_{\ell=1}^k s_\ell^{m_\ell} (1 - s_\ell)^{n - m_\ell} \rho^{(k)}(\dd \bm s \mid \bm x^*);
       \]
       \item[(ii)] conditionally on $\bm S^* = \bm s^*$, the random measure $\mu^\prime$ admits the representation  $\mu^{\prime} = \sum_{j \geq 1} S^\prime_j \delta_{X^\prime_j}$, where the distribution of $\Psi^\prime : = \sum_{j \geq 1} \delta_{(X^\prime_j, S^\prime_j)}$ is absolutely continuous with respect to the distribution of $\Psi^!_{\bm x^*, \bm s^*}$, with density
\begin{equation}\label{eq:density_psiprime}
        f_{\Psi}^\prime(\nu) \propto e^{\int_{\X \times (0, 1]} n \log(1 - t) \nu(\dd z \, \dd t)}.
\end{equation}
    \end{enumerate}
\end{theorem}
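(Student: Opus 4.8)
The plan is to compute, for every bounded measurable functional $G$ of the point process $\Psi$, the (unnormalised) joint law of $\Psi$ and the observed feature labels on the event that the sample displays exactly $k$ features with labels $\bm x^*$ and frequency counts $\bm m$ --- running the Palm-calculus computation underlying \Cref{thm:marg} with an extra factor $G(\Psi)$ carried along --- and then to normalise by the marginal probability of that configuration, which is recovered by setting $G\equiv1$. Conditionally on $\mu$, i.e.\ on $\Psi=\sum_{j\ge1}\delta_{(X_j,S_j)}$, the indicators $A_{i,j}$ are conditionally independent $\bern(S_j)$ across $i$ and $j$; hence, for a fixed binary allocation matrix with $k$ nonzero columns having column sums $\bm m$, the conditional likelihood is a sum over ordered $k$-tuples $(j_1,\dots,j_k)$ of distinct atom indices in which the $\ell$-th ``expressed'' atom contributes $S_{j_\ell}^{m_\ell}(1-S_{j_\ell})^{n-m_\ell}$ and every other atom contributes $(1-S_j)^n$. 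Probing the labels with a nonnegative measurable test function $h$ on $\X^k$, the object to evaluate is therefore
\[
 \E\Big[\,G(\Psi)\sum_{(j_1,\dots,j_k)}^{\neq} h(X_{j_1},\dots,X_{j_k})\,\prod_{\ell=1}^{k}S_{j_\ell}^{m_\ell}(1-S_{j_\ell})^{n-m_\ell}\!\!\prod_{j\notin\{j_1,\dots,j_k\}}\!\!(1-S_j)^{n}\Big].
\]

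The key step is the Palm disintegration. When $\Psi=\Psi^!_{\bm x,\bm s}+\sum_{\ell=1}^k\delta_{(x_\ell,s_\ell)}$, the ``other'' atoms are exactly those of $\Psi^!_{\bm x,\bm s}$, so the product over unexpressed atoms equals $L_n(\Psi^!_{\bm x,\bm s})$, where $L_n(\varphi):=\exp\{\int_{\X\times(0,1]}n\log(1-t)\,\varphi(\dd z\,\dd t)\}$; no cancellation of weights is needed, and the contributions of the expressed and unexpressed atoms stay cleanly separated. The $k$-th order reduced Campbell--Mecke (refined Campbell) formula for the reduced Palm distributions of $\Psi$ --- valid since every $M_\Psi^{(k)}$ is $\sigma$-finite --- then rewrites the expectation of the sum as
\[
 \int_{(\X\times(0,1])^{k}}\!\! h(x_1,\dots,x_k)\Big(\prod_{\ell=1}^{k}s_\ell^{m_\ell}(1-s_\ell)^{n-m_\ell}\Big)\,\E\big[G\big(\Psi^!_{\bm x,\bm s}+{\textstyle\sum_{\ell}}\delta_{(x_\ell,s_\ell)}\big)\,L_n(\Psi^!_{\bm x,\bm s})\big]\,M_\Psi^{(k)}(\dd\bm x\,\dd\bm s),
\]
and disintegrating $M_\Psi^{(k)}$ via \eqref{eq:mpsi_disinteg} turns the inner part into a density against $\tilde m^{(k)}_\xi(\dd\bm x)$. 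Setting $G\equiv1$ and $h\equiv1$ returns precisely the integrand of \Cref{thm:marg}; dividing the two (that is, conditioning on the event and on the observed labels, which together with $\bm m$ exhaust the information in $\bm Z$ that affects the law of $\mu$) yields, for every bounded measurable $G$,
\[
 \E[G(\Psi)\mid\bm Z]=\frac{\displaystyle\int_{(0,1]^k}\!\Big(\prod_{\ell}s_\ell^{m_\ell}(1-s_\ell)^{n-m_\ell}\Big)\,\E\big[G\big(\Psi^!_{\bm x^*,\bm s}+{\textstyle\sum_{\ell}}\delta_{(x^*_\ell,s_\ell)}\big)\,L_n(\Psi^!_{\bm x^*,\bm s})\big]\rho^{(k)}(\dd\bm s\mid\bm x^*)}{\displaystyle\int_{(0,1]^k}\!\Big(\prod_{\ell}s_\ell^{m_\ell}(1-s_\ell)^{n-m_\ell}\Big)\,\E\big[L_n(\Psi^!_{\bm x^*,\bm s})\big]\rho^{(k)}(\dd\bm s\mid\bm x^*)}.
\]

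The three assertions follow by reading off this identity. The outer integral exhibits $\Psi\mid\bm Z$ as a mixture over $\bm s$ of the laws of $\Psi^!_{\bm x^*,\bm s}+\sum_\ell\delta_{(x^*_\ell,s_\ell)}$ with mixing law $f_{\bm S^*}(\dd\bm s)\propto\big(\prod_\ell s_\ell^{m_\ell}(1-s_\ell)^{n-m_\ell}\big)\,\E[L_n(\Psi^!_{\bm x^*,\bm s})]\,\rho^{(k)}(\dd\bm s\mid\bm x^*)$ --- which is (i), since $\E[L_n(\Psi^!_{\bm x^*,\bm s})]=\E\{e^{\int n\log(1-t)\Psi^!_{\bm x^*,\bm s}(\dd z\,\dd t)}\}$ and the normalising constant, being the \Cref{thm:marg} density at the (a.s.\ realised) label vector $\bm x^*$, is positive and finite. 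Conditionally on $\bm S^*=\bm s^*$, the residual measure $\Psi^\prime:=\Psi-\sum_\ell\delta_{(x^*_\ell,S^*_\ell)}$ has a law that integrates any bounded $G$ to a quantity proportional to $\E[G(\Psi^!_{\bm x^*,\bm s^*})\,L_n(\Psi^!_{\bm x^*,\bm s^*})]$, so the law of $\Psi^\prime$ is absolutely continuous with respect to that of $\Psi^!_{\bm x^*,\bm s^*}$ with density $f_{\Psi}^\prime(\nu)\propto L_n(\nu)=e^{\int n\log(1-t)\nu(\dd z\,\dd t)}$ --- this is (ii) and \eqref{eq:density_psiprime}. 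Finally, applying the functional \eqref{eq:mu_definition} to $\Psi\mid\bm Z\dequal\sum_\ell\delta_{(x^*_\ell,S^*_\ell)}+\Psi^\prime$ gives $\mu\mid\bm Z\dequal\sum_{\ell=1}^k S^*_\ell\delta_{x^*_\ell}+\mu^\prime$ with $\mu^\prime(B)=\int_{\X\times(0,1]} s\,\indicator_B(x)\,\Psi^\prime(\dd x\,\dd s)$ for $B\in\Xcr$, i.e.\ $\mu^\prime=\sum_{j\ge1}S^\prime_j\delta_{X^\prime_j}$; the displayed atoms are a.s.\ pairwise distinct, since $\Psi^!_{\bm x^*,\bm s^*}$ (hence $\Psi^\prime$) a.s.\ charges no point above any $x^*_\ell$ and $\Psi$ is simple, which establishes \eqref{eq:post_mu_sum}.

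The main obstacle is the measure-theoretic bookkeeping around the conditioning rather than any single calculation. One must check that $\tilde m^{(k)}_\xi$ is a legitimate reference measure, so that the notion of a joint density in the labels, and hence the displayed ratio, make sense --- this is exactly where the standing $\sigma$-finiteness of every $M_\Psi^{(k)}$, hence the disintegration \eqref{eq:mpsi_disinteg}, is used; that conditioning on $\bm Z$ reduces, for the law of $\mu$, to conditioning on the sufficient statistics $(K_n,\bm x^*,\bm m)$, the remaining information (which observation carries which feature) being immaterial by exchangeability; and that $L_n(\Psi)=\prod_{j\ge1}(1-S_j)^n$ is a bona fide $[0,1]$-valued random variable --- a decreasing limit of finite products, hence measurable, vanishing exactly on $\{\sum_{j}-\log(1-S_j)=\infty\}$ --- so that all interchanges of $\E$, the tuple sum and the Palm integral are licensed; as every integrand is nonnegative, Tonelli suffices throughout. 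The reduced Campbell--Mecke identity itself is invoked as a black box from \cite{BaBlaKa}, exactly as in the proof of \Cref{thm:marg}.
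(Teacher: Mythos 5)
Your proposal is correct and follows essentially the same route as the paper: both proofs express the (unnormalised) conditional law as a Palm disintegration of the likelihood via the $k$-th order reduced Campbell--Little--Mecke formula, disintegrate $M_\Psi^{(k)}$ as in \eqref{eq:mpsi_disinteg}, and then read off the mixture over $\bm s$ and the exponential tilting $e^{\int n\log(1-t)\,\nu(\dd z\,\dd t)}$ of $\Psi^!_{\bm x^*,\bm s^*}$. The only (cosmetic) difference is that the paper carries the Laplace functional $e^{-\Psi(sf)}$ as the test functional and verifies at the end that the posterior Laplace functional matches that of $\sum_\ell S^*_\ell\delta_{x^*_\ell}+\mu^\prime$, whereas you carry a general bounded functional $G(\Psi)$ and identify the hierarchical structure directly.
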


From Theorem \ref{thm:post}, the posterior distribution of $\mu$ is decomposed in a part that involves previously observed labels out of the sample and a component that involves hitherto unseen features, i.e., $\mu^\prime$. Note that $\mu^{\prime}$ in point (ii) is an a.s. discrete random measure whose Laplace functional is available and equals
\begin{equation}\label{eq:post_laplace_main}
    \E\left\{ e^{- \int_\X f(z) \mu^\prime(\dd z)}  \mid \bm S^* = \bm s^* \right\} = \frac{\E\left\{ e^{- \int_{\X \times (0, 1]} t f(z) - n \log(1 - t) \Psi^!_{\bm x^*, \bm s^*}(\dd z \, \dd t)} \right\}}{\E\left\{ e^{\int_{\X \times (0, 1]} n \log(1 - t) \Psi^!_{\bm x^*, \bm s^*}(\dd z \, \dd t)} \right\}}.
\end{equation}
The posterior representation resembles available results in the literature; see, for example, \cite[Theorem 3.1]{Jam(17)}, with the fundamental difference that here the distribution of $\mu^\prime$ depends on the previously observed features $\bm x^*$ via the reduced Palm version of $\Psi$, which describes how the distribution of unseen features is reshaped after conditioning on the observed labels.

To conclude the general Bayesian analysis of the extended feature models in \eqref{eq:representation_theorem}, we provide the predictive distribution of the next observation, conditionally on the available sample, which easily follows from \Cref{thm:post} by a standard disintegration argument.

\begin{theorem}\label{cor:pred}
  Under the same assumptions of Theorem \ref{thm:post}, the conditional distribution of $Z_{n+1}$, given the sample  $\bm Z$,  satisfies the following distributional equality
    \begin{equation}\label{eq:pred_sum}
       Z_{n+1} \mid \bm Z \dequal \sum_{\ell=1}^{k} A^*_{n+1, \ell} \delta_{x^*_\ell} + Z^\prime_{n+1},
    \end{equation}
    where
    \begin{enumerate}
        \item[(i)] $(A^*_{n+1, 1}, \ldots, A^*_{n+1, k})$, conditionally on  $S^*_1, \ldots, S^*_k$, is a vector of independent Bernoulli random variables with parameters $(S^*_1, \ldots, S^*_k)$, which have been defined in  \Cref{thm:post};
        \item[(ii)] $Z^\prime_{n+1}  \sim \mathrm{BeP}(\mu^\prime)$ and $\mu^\prime$ is distributed according to \Cref{thm:post}.
    \end{enumerate}
\end{theorem}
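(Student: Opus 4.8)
The plan is to obtain the predictive law as an immediate consequence of the posterior representation in \Cref{thm:post}, via the two–stage sampling implied by conditional i.i.d.\ structure. In model \eqref{eq:representation_theorem} we have $Z_{n+1}\mid\mu\sim\mathrm{BeP}(\mu)$ independently of $Z_1,\dots,Z_n$ given $\mu$, so the conditional law of $Z_{n+1}$ given $\bm Z$ is the mixture: first draw $\mu$ from its posterior $\mu\mid\bm Z$, then draw $Z_{n+1}\mid\mu\sim\mathrm{BeP}(\mu)$. I would therefore substitute the posterior decomposition $\mu\mid\bm Z\dequal\sum_{\ell=1}^k S^*_\ell\delta_{x^*_\ell}+\mu'$ of \Cref{thm:post} into the operation $\mu\mapsto\mathrm{BeP}(\mu)$ and read off the resulting law.

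The structural fact I would invoke is that $\mathrm{BeP}$ of a superposition of two measures with almost surely disjoint atom sets is the superposition of two \emph{independent} Bernoulli processes, conditionally on the two measures. Indeed, by construction \eqref{eq:Zi_random_measure}--\eqref{eq:mu_definition}, $\mathrm{BeP}(\mu)$ is obtained by independent Bernoulli thinnings of the atoms of the underlying point process, with retention probability equal to each atom's weight; hence, conditionally on $(\bm S^*,\mu')$, the part of $Z_{n+1}$ supported on $\{x^*_1,\dots,x^*_k\}$ and the part supported on the atoms of $\mu'$ are independent. This yields $Z_{n+1}\mid\bm Z\dequal\sum_{\ell=1}^k B^*_{n+1,\ell}\delta_{x^*_\ell}+Z'_{n+1}$ with $(B^*_{n+1,\ell})_{\ell}$ conditionally independent $\mathrm{Be}(S^*_\ell)$ given $\bm S^*$, and $Z'_{n+1}\mid\mu'\sim\mathrm{BeP}(\mu')$. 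For the superposition to be legitimate I need the atoms $X'_j$ of $\mu'$ to be a.s.\ distinct from $x^*_1,\dots,x^*_k$: this holds because, by \Cref{thm:post}(ii), $\Psi'$ is absolutely continuous with respect to the reduced Palm version $\Psi^!_{\bm x^*,\bm s^*}$, which by construction has the trivial atoms at $(x^*_\ell,s^*_\ell)$ removed, and simplicity of $\Psi$ guarantees no remaining atom shares a label with the $x^*_\ell$'s. I would also note that $\bm S^*$ and $\mu'$ keep exactly the joint law of \Cref{thm:post} — in particular the conditional law of $\mu'$ given $\bm S^*=\bm s^*$ is still governed by the density \eqref{eq:density_psiprime} — so items (i) and (ii) of the statement follow verbatim.

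I expect no genuine obstacle: all the analytical content sits in \Cref{thm:post}, and the predictive statement is its routine ``thinning'' corollary. The only mildly delicate point is the measure-theoretic bookkeeping, namely checking that $(\bm s^*,\nu)\mapsto \mathrm{BeP}\big(\sum_\ell s^*_\ell\delta_{x^*_\ell}\big)+\mathrm{BeP}(\nu)$ (with the two Bernoulli processes drawn independently) is a bona fide probability kernel on $\mathbb M_\X$, so that mixing it over the posterior law of $(\bm S^*,\mu')$ is well defined and agrees with the direct disintegration of $\mathrm{BeP}(\mu)$ along $\mu=\sum_\ell S^*_\ell\delta_{x^*_\ell}+\mu'$; this is standard since $\mathrm{BeP}$ is a measurable operation on locally finite counting-measure inputs, so I would state it and move on.
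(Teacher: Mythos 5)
Your proposal is correct and coincides with the paper's own route: the paper proves \Cref{cor:pred} precisely by the disintegration you describe, i.e., drawing $\mu$ from the posterior of \Cref{thm:post} and then $Z_{n+1}\mid\mu\sim\mathrm{BeP}(\mu)$, with the Bernoulli thinning acting independently on the fixed atoms $x^*_\ell$ and on the atoms of $\mu'$. The supporting details you supply (disjointness of the atom sets via the reduced Palm construction, and the kernel measurability of $\mathrm{BeP}$) are exactly the bookkeeping the paper leaves implicit when it calls this a ``standard disintegration argument.''
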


Note that, in the point process language, the latter measure $Z^\prime_{n+1}$ in (ii) is obtained by first thinning the process $\Psi^\prime = \sum_{j \geq 1} \delta_{(X^\prime_j, S^\prime_j)}$ with retention probability $p(x, s) = s$, and then discarding the second component. Theorem \ref{cor:pred} shows that the next individual $Z_{n+1}$ has a positive probability of displaying the features $x^*_1, \ldots, x^*_k$ observed out of the initial sample, and it can display hitherto unseen features, which correspond to the atoms of $Z^\prime_{n+1}$.

Together, \Cref{thm:post,cor:pred} identify the mechanism through which dependence among feature labels enters prediction. The distribution of new features is determined by the distribution of $\mu^\prime$, or equivalently by that of $\Psi^\prime$, which is a tilted reduced Palm version of the prior process $\Psi$. 
Therefore, the role of the observed labels in predicting unseen features is governed entirely by the reduced Palm law: the labels $\bm x^*$ influence the prediction of unseen features if and only this law depends on $ \bm x^*$.
 This perspective both recovers and generalizes previous results. Under a \textsc{crm} prior, the dependence collapses to the sample size $n$ alone \citep[Proposition 3.2]{Jam(17)}; under the stable-beta scaled process of \cite{Cam(23)}, it extends to $n$ and $k$.
\Cref{sec:predictive_characterization} characterizes the classes of priors giving rise to these predictive structures, whereas \Cref{sec:sncp_model_and_application,sec:dpp_model_and_application} develop models in which the labels themselves contribute to prediction.

\section{Sufficientness postulates} \label{sec:predictive_characterization}


In the same spirit as sufficientness postulates for species sampling models, we now consider the problem of characterizing prior distributions in extended feature allocation models, leading to predictive distributions that satisfy specific properties.
In particular, we focus our analysis on how the Bernoulli process $Z^\prime_{n+1}$ in \eqref{eq:pred_sum} depends on the sampling information in $\bm Z$. As we will clarify in \Cref{rmk:more_general_postulates} at the end of this section, there is no gain in considering $Z_{n+1}$ in place of $Z^\prime_{n+1}$.
From point (ii) of \Cref{cor:pred}, it is clear that the distribution of  $Z^\prime_{n+1}$ is uniquely characterized by the random measure $\mu^\prime$ in \eqref{eq:post_mu_sum} with Laplace functional \eqref{eq:post_laplace_main}.
Therefore, in general, the predictive distribution of $Z^\prime_{n+1}$ depends on the sample size $n$, and on the observed feature label-frequency pairs \((\bm x^*,\bm m)\), up to the arbitrary ordering of the observed features.

Special cases of extended feature models previously studied in the literature, lead to much simpler predictive laws. For \textsc{crm} priors \cite{Jam(17)} the law of $Z^{\prime}_{n+1}$ depends only on the sample size $n$ (see also \Cref{thm:james}). Instead, in the case of the stable beta scaled process in \cite{Cam(23)} and for feature models having a product form \textsc{efpf} \citep{battiston18,ghilotti2024bayesian}, such predictions may also depend on the number of distinct features $k$.
Motivated by such examples, we characterize the classes of extended feature allocation models for which the law of $Z^\prime_{n+1}$ depends on the initial sample $\bm Z$ (i) only through the sample size $n$, and (ii) solely on $n$ and the number of observed features $k$. 

\begin{theorem}\label{teo:pred_char}
Consider a sample $\bm Z$ from the statistical model \eqref{eq:representation_theorem}. Then,
\begin{enumerate}
    \item[(i)]  The distribution of $Z^\prime_{n+1}$ in \Cref{cor:pred} depends on the observed sample $\bm Z$ solely through the sample size $n$ if and only if $\Psi$ in model \eqref{eq:representation_theorem} is a Poisson process.
    \item[(ii)] The distribution of $Z^\prime_{n+1}$  depends on the observed sample $\bm Z$ solely through $n$ and $k$ if and only if $\Psi$ in model \eqref{eq:representation_theorem} is a mixed Poisson or mixed binomial process. 
\end{enumerate}   
\end{theorem}
The proof of Theorem \ref{teo:pred_char} is deferred to the supplementary material (see Section \ref{app:pred_characterization}). Point (i) of this theorem in the feature setting is the most natural counterpart of the results by  \citet{Reg(78),Lo(91)}, who characterized the Dirichlet process as the unique species sampling prior in which the probability of observing a new species depends only on the sample size, and the probability of observing a previously recorded species depends on both the sample size and its frequency. 
Other characterizations for the Pitman--Yor process and Gibbs-type priors \citep{Zab05,Bac(17)}, show the corresponding role of the sample size and the number of observed species.  In the feature setting, point (ii) of \Cref{teo:pred_char} gives the analogous result for priors whose unseen-feature prediction depends only on \(n\) and \(K_n\).

Point (i) of Theorem \ref{teo:pred_char} fully characterizes the class of \textsc{crm} priors analyzed in \cite{Jam(17)}
(see also Section \ref{supp:Poisson}), while point (ii) of Theorem \ref{teo:pred_char} characterizes a broad class of prior distributions that includes, among others, all feature models having a product form \textsc{efpf} \citep{battiston18}, as well as the stable beta scaled processes by \cite{Cam(23)}. 
From \cite{battiston18,ghilotti2024bayesian}, it is not difficult to realize that the point process $\Psi$ associated with a feature model having a product form \textsc{efpf} could be either ($i$) a mixed binomial point processes, i.e., $\Psi = \sum_{j=1}^M \delta_{(X_j, S_j)}$, where $M$ is random, the $S_j$'s are i.i.d. beta distributed and the $X_j$'s are i.i.d. from a diffuse measure further independent of the $S_j$'s, or ($ii$) a mixed Poisson processes  $\mathrm{MP}(\nu, f_\gamma)$ with $\nu$ being the L\'evy intensity of a three-parameter beta process \citep{Teh09} and $\gamma$ a positive random variable.
These are only very special cases of mixed binomial and mixed Poisson processes. In Sections \ref{supp:mixed_Poisson} and \ref{sec:mixed_binomial} we specialize our general theorems to the classes of mixed binomial and mixed Poisson processes, providing a comprehensive analysis of all the priors leading to predictions depending on $n$ and $K_n$.
Finally, Section \ref{supp:prediction_all} suggests a straightforward strategy to induce a predictive distribution depending on the whole frequency spectrum while maintaining computational convenience.
\begin{remark} \label{rmk:more_general_postulates}
Sufficientness postulates for species sampling models characterize specific prior distributions such as the finite Dirichlet distribution, the Dirichlet process, and the Pitman-Yor process \citep{Bac(17)}, by imposing conditions on the probability of observing a new species and the probability of re-observing a species recorded in the sample. By contrast, our postulates focus only on the distribution of new features and characterize broad classes of priors.  It is then natural to wonder if, by adding further conditions on previously observed features, it is possible to restrict the characterization to specific prior distributions.
This question has a negative answer, as clarified by a simple counterexample. In fact, for any Poisson, mixed Poisson, or mixed binomial prior, the probability of re-observing a feature depends exclusively on the sample size and the frequency of that feature (see Sections \ref{supp:Poisson}, \ref{supp:mixed_Poisson} and \ref{sec:mixed_binomial}).
This excludes the possibility of distinguishing within the class of \textsc{crm}s by adding structural constraints on the predictive law for the previously observed features.
\end{remark}

Hereafter, we move beyond the traditional classes of priors characterized in \Cref{teo:pred_char}, leveraging the general results of Section \ref{sec:general_bayesian_analysis} to induce interactions across feature labels and to address applications in which such interactions are scientifically meaningful.

\section{Joint clustering and feature discovery via Cox process priors}\label{sec:sncp_model_and_application}

We now consider settings in which feature labels are informative and expected to exhibit clustering structure.
Such settings fall outside the regime characterized in \Cref{sec:predictive_characterization}, as prediction must exploit information contained in the labels themselves.
In this section we are motivated by a genomic application, where the feature labels represent \emph{embeddings} of genomic variants. Such embeddings are high-dimensional Euclidean vectors that encode key aspects of the variants, and were obtained following the pipeline of \cite{niu2025incorporating}: we obtained textual descriptions of each variant by querying online databases with genomic annotations, we processed those texts in a standardized format, and used a large language model (\textsc{llm}) to obtain sentence embeddings. See Section \ref{sec:sncp_genomics} for additional details. The statistical goal here is threefold. First, we aim at clustering the variants through their embeddings, since such clustering may reveal functional or evolutionary groupings that are biologically meaningful and may guide downstream interpretation. Second, we consider the usual unseen feature problem, whereby we predict the number of new variants discovered in an additional sample. Third, we want to quantify how many new clusters may emerge in future samples.
To address this joint inferential problem, in \Cref{sec:mixture} we introduce a novel class of extended feature models based on cluster Cox processes or, equivalently, on a mixture model for the feature labels $X_j$'s. 
We then apply the resulting framework to Glioblastoma variant data in \Cref{sec:sncp_genomics}.

\subsection{The Cox process prior} \label{sec:mixture}

Cox point processes \citep{BaBlaKa} are a natural tool when the points of interest are arranged into latent clusters. Here, we use them to induce clustering among the feature labels $X_j$'s. More specifically, $\Psi$ in \eqref{eq:mu_definition} is defined as a Poisson process with random intensity according to the following hierarchical construction
\begin{equation}\label{def:psi_sncp}
\begin{split}
    \Psi \mid \Lambda &\sim \textsc{pp}\left( \rho(\dd s) L_\Lambda (x)  \dd x \right), \qquad \text{where} \; \;  L_\Lambda (x) =  \int_{\Theta \times\R_+ } \gamma \kappa(x; \theta) \Lambda (\dd \theta \, \dd \gamma)\\
 \Lambda &\sim \textsc{pp}(\tilde{\rho}(\dd \gamma) G_0(\dd \theta)),
    \end{split}
\end{equation}
where $\kappa$ is a parametric probability kernel from $\Theta$, the parameter space, to $\X$; $\rho(\dd s)$ is a locally finite intensity measure on $(0,1]$ such that $\int_{\R_+} s \rho(\dd s) < \infty$, which governs the probability of observing each feature. $G_0$ is a probability distribution on $\Theta$, and $\tilde\rho$ is a locally integrable measure on $\R_+$ such that $\int_{\R_+} \gamma \Lambda(\dd \theta\,\dd \gamma) < \infty$ almost surely. If $\rho$ is integrable, the marginal process of feature labels $X_j$'s, is a shot noise Cox process in the sense of \cite{Moller03}.

Spelling out the point process $\Lambda =  \sum_{h\geq 1} \delta_{(\theta_h, \gamma_h)} $ in \eqref{def:psi_sncp} reveals a strong connection with Bayesian mixture models. Indeed, $L_\Lambda(x) = \sum_{h\geq 1} \gamma_h \kappa(x; \theta_h)$. Hence, the $X_j$'s are i.i.d. from the mixture density $\frac{1}{\Gamma} L_\Lambda$, where $\Gamma = \sum_{h \ge 1} \gamma_h$ is assumed to be finite almost surely. Hence, under \eqref{def:psi_sncp}, the feature labels are naturally organized into different latent clusters.
In particular, \eqref{def:psi_sncp} can be equivalently rephrased as a mixture model on the labels, which is a more common formulation among the Bayesian nonparametric literature. 
While such a formulation is intuitive, the one in \eqref{def:psi_sncp} is better suited to exploit the general results of Section \ref{sec:bnp}. See  \cite{Wang2024} for further insights on the connections between cluster processes and mixture models.

We now specialize the posterior representation in \Cref{thm:post} for the Cox process prior, given an observable sample $\bm Z$ from the statistical model \eqref{eq:representation_theorem}, displaying features $\bm x^* = (x^*_1,\ldots,x^*_k)$ with associated frequency counts $\bm m = (m_1,\ldots,m_k)$.
Let us introduce a  set of latent variables $\bm T := (T_1, \ldots , T_k)$ describing a partition of the observed features $(x^*_1,\ldots,x^*_k)$ into $C_n:=|\bm T|$ clusters such that $T_\ell= h $ if and only if $x_\ell^*$ belongs to cluster $h$. We write $\bm x_h^*: = \{ x_\ell^*: T_\ell = h\}$ for the features allocated to cluster $h$, and  $n_h$  for their cardinality. The proof of the next corollary shows $\bm T$ has distribution
   \begin{equation} \label{eq:distribution_T}
         \Pp(\bm T = \bm t) \propto \prod_{h = 1}^{c_n} \int_{\R_+} e^{-\gamma\varphi_n} \gamma^{n_h} \int_\X \prod_{\ell: t_\ell = h} \kappa( x^*_\ell; \theta) G_0(\dd \theta) \tilde{\rho}(\dd \gamma),
    \end{equation} 
    where $c_n=|\bm t|$ and $\varphi_n = \int_{(0,1]} (1 - (1-s)^n) \rho(s) \dd s$.
\begin{corollary} \label{prop:bayesian_sncp_features_posterior}
Let $\bm Z$ be a sample from \eqref{eq:representation_theorem}, where $\mu$ is the functional of the process $\Psi$ described in \eqref{def:psi_sncp}. 
Then, the posterior distribution of $\mu$ is as in \eqref{eq:post_mu_sum}, where $S^*_\ell \ind f_{S^*_\ell}(\dd s) \propto s^{m_\ell} (1 - s)^{n - m_\ell} \rho(\dd s)$, and are further independent of $\mu^\prime$. Moreover $\mu^\prime = \sum_{j \geq 1} S^\prime_j \delta_{X^\prime_j}$, where $\Psi^\prime= \sum_{j \geq 1} \delta_{(X_j^\prime , S_j^\prime)}$ satisfies the following distributional equality
    \begin{equation}\label{eq:joint_psi_prime_T}
        \Psi^\prime \mid \bm T \dequal \Psi^{(0)} + \sum_{h=1}^{C_n} \Psi^{(\bm x^*_h)}
    \end{equation}
being  $\bm T := (T_1,\ldots,T_k)$ the latent allocation variables  with distribution \eqref{eq:distribution_T}. 
Moreover,
    the processes $\Psi^{(0)}$ and $\Psi^{(\bm x^*_h)}$ in \Cref{eq:joint_psi_prime_T}, as $h = 1,\ldots, C_n$, are mutually independent conditionally to $\bm T$, having the distribution described below:
    \begin{enumerate}
        \item[(i)] $\Psi^{(0)}$ is again a Cox process having a distribution similar to $\Psi$, namely
        \begin{equation*}
            \Psi^{(0)} \mid \Lambda^{(0)} \sim\textsc{pp}\left( \rho_n(\dd s)  L_{\Lambda^{(0)}} (x) \dd x \right),   
           \qquad \Lambda^{(0)} \sim \textsc{pp}(e^{- \gamma\varphi_n} \tilde{\rho}(\dd \gamma) G_0(\dd \theta)),
        \end{equation*}
    with $\rho_n(\dd s) = (1 - s)^n \rho(\dd s)$ and $ L_{\Lambda^{(0)}}$ is the integral defined in \eqref{def:psi_sncp} where $\Lambda$ is replaced with $\Lambda^{(0)}$;
    \item[(ii)] for each $h=1,\ldots,C_n$, the process $\Psi^{(\bm x^*_h)}$ is such that 
            \begin{equation*}
            \begin{aligned}
                \Psi^{(\bm x^*_h)} &\mid \left( \zeta_{\bm x^*_h} = (\theta_{\bm x^*_h}, \gamma_{\bm x^*_h}) \right) \sim \textsc{pp}\left( \gamma_{\bm x^*_h} \rho_n(\dd s) \kappa(x; \theta_{\bm x^*_h}) \, \dd x \right),\\
                \zeta_{\bm x^*_h}  &\sim f^{(\bm x^*_h)}(\dd\theta\, \dd \gamma) \propto e^{-\gamma \varphi_n \kappa(x; \theta) \dd x} \gamma^{n_h} \tilde{\rho}(\dd \gamma)  \prod_{\ell: t_\ell = h} \kappa( x^*_\ell; \theta)  G_0(\dd \theta).
            \end{aligned}
            \end{equation*} 
    \end{enumerate}   
\end{corollary}
The proof of \Cref{prop:bayesian_sncp_features_posterior} is reported in \Cref{app:sncp_appendix}, together with the marginal distribution  (\Cref{app:add_res_cox_model}). Corollary \ref{prop:bayesian_sncp_features_posterior} gives a transparent statistical interpretation of the posterior. Conditionally on the latent partition $\bm T$, the unseen-feature process splits into two independent components. The process $\Psi^{(0)}$ represents features belonging to clusters not yet represented among the observed features, whereas the processes $\Psi^{(\bm x^*_h)}$, $h=1,\ldots,C_n$, represent additional unseen features attached to clusters already suggested by the observed features.
In particular,  \(\Psi^{(0)} \mid \Lambda^{(0)}\) in \eqref{eq:joint_psi_prime_T} is the  superposition   \(\sum_{\ell \geq 1} \Psi_\ell^{(0)}\), where \(\Lambda^{(0)} = \sum_{\ell \geq 1} \delta_{(\theta_\ell^{(0)}, \gamma_\ell^{(0)})}\), and each \(\Psi_\ell^{(0)}\) is a Poisson process with intensity $\rho_n(\dd s) \, \gamma_\ell^{(0)} \kappa(x; \theta_\ell^{(0)})\,\mathrm{d}x$.  Hence, unobserved features are organized into a collection of unobserved clusters, represented by \((\Psi_h^{(0)})_{h \geq 1}\), or may aggregate into one of the previously observed \(C_n\) clusters if they belong to a point process of the type \(\Psi^{(\bm x_h^*)}\).
The predictive distribution of the next observation $Z_{n+1}$ still has the general form \eqref{eq:pred_sum}. Under the Cox prior, the term $Z_{n+1}^\prime$ is a Bernoulli process with parameter $\mu^\prime =  \sum_{j \geq 1} S^\prime_j \delta_{X^\prime_j}$, where $((X_j^\prime, S_j^\prime))_{j \geq 1}$ are the points of the process $\Psi^\prime$ in \eqref{eq:joint_psi_prime_T}. 
In \Cref{app:add_res_cox_model} we exploit this relation recursively to obtain the $m$-step ahead prediction $K_m^{(n)}$, i.e., the number of new features that would be discovered in a future sample of size $m$. In particular, such predictive distributions depend on the observed sample through $n, K_n$ and $x^*_1, \ldots, x^*_{K_n}$ thus enriching the predictive structure of the Poisson, mixed Poisson and binomial families.
The model therefore addresses clustering and discovery simultaneously, with future observations either reinforcing existing clusters or revealing entirely new ones.

Our third inferential goal is the number of \emph{new clusters} that would be discovered in a future sample of size $m$. That is, if $\bm Z^{(n+m)}$ is a sample of size $n+m$ from \eqref{eq:representation_theorem} where we observe the first $n$ coordinates $\bm Z$ and introduce the allocation variables $\bm T^{(n)}$ and $\bm T^{(n+m)}$ with law as in \eqref{eq:distribution_T}, then the number of new clusters is $C_m^{(n)} := |\bm T^{(n+m)}| - |\bm T^{(n)}|$. The next proposition provides the distribution of $C_m^{(n)}$.
\begin{proposition}\label{prop:sncp_Cmn}
Consider a sample $\bm Z$ from the statistical model \eqref{eq:representation_theorem}, where $\mu$ is the functional of the process $\Psi$ described in \eqref{def:psi_sncp} . 
    The number of new clusters in a future sample of size $m$ is distributed as
    \[
        C_m^{(n)} \mid \bm Z \sim \mathrm{Poisson}\left( \int \{1 - \exp(-\gamma(\varphi_{n+m}- \varphi_n))\} e^{-\gamma \varphi_n} \tilde{\rho}(\dd \gamma)  \right).
    \]
\end{proposition}
The proof is reported in \Cref{app:proofs_cox}.
Observe that the number of new clusters $C_m^{(n)}$ depends on the observed sample $\bm Z$ only on the sample size $n$, which is expected since $\Lambda$ is a Poisson process. However, putting hyperpriors on the parameters of $\Lambda$ lets these predictions use more of the observed sample.

\subsection{Parameter selection and fitting details}\label{sec:ext_gaussian_ibp}

We now specify a set of standard choices for the parameters in \eqref{def:psi_sncp}. 
The intensity measure $\rho$ affects the feature occurrence probabilities. Motivated by our genomic application, we follow \cite{Mas(22)} and assume the three-parameter beta process \citep{Teh09} $\rho(\dd s)= c\,s^{-1-\alpha}(1-s)^{\beta+\alpha-1}\indicator_{(0,1]}(s) \ \dd s,$
with $c>0$, $\alpha \in [0,1)$ and $\beta>- \alpha$. This yields feature occurrence probabilities with a power law with exponent $-\alpha$, while $K_n$, a priori, grows like $n^\alpha$ \citep{Mas(22)}.
Moreover $\X = \R^d$ is the space where the embeddings are defined. To cluster the embeddings we specify $\kappa$ as a Gaussian kernel as standard in model-based clustering. 
To avoid overparametrizing our model, we assume $\kappa(\cdot;\theta) = \calN(\cdot; \mu, \sigma^2 I_d)$, where $\theta = (\mu, \sigma^2) \in \R^d \times \R_+$, and $I_d$ denotes the $d\times d$ identity matrix. We pair this with a conjugate base measure $G_0 (\dd \mu \, \dd \sigma )$ given by $\mu\mid \sigma^2 \sim\mathcal{N}(m_0,\sigma^2(\lambda_0^{-1})I_d)$ and $\sigma^2 \sim \mathrm{Inv-Gamma}(a,b)$,
which simplifies posterior computations.
Moreover, we assume that $\Lambda$ is a Gamma process, that is $\tilde{\rho}(\dd \gamma) = \tau_0 \gamma^{-1} e^{-b_0 \gamma} \indicator_{\R_+}(\gamma) \ \dd \gamma$,
with $\tau_0, b_0 > 0$.  
Hence, the feature labels $X_j$'s are generated from a Dirichlet process mixture model with mixture kernel $\kappa$.  
We refer to the model resulting from this combined choice of $\rho$, $\kappa$, $\tilde{\rho}$ and $G_0$ as \emph{extended Gaussian-\textsc{ibp}}. 
In this case, it can be easily shown  the distribution of $C_m^{(n)}$ in \Cref{prop:sncp_Cmn} boils down to 
    \[
        C_m^{(n)}\mid \bm Z \sim \mathrm{Poisson}\left( \tau_0 \log\left(\frac{b_0+\varphi_{n+m}}{b_0+\varphi_n}\right) \right).
    \]
Posterior inference for the extended feature model is performed via Markov chain Monte Carlo (\textsc{mcmc}), exploiting the posterior characterization in \Cref{prop:bayesian_sncp_features_posterior}. See \Cref{app:mcmc_sncp} for detailed description of the updates, with a focus on the extended Gaussian-\textsc{ibp} we described above. In particular, we put hyperpriors on the parameters $\tau_0,b_0$ in $\tilde{\rho}$ and $c, \alpha, \beta$ in $\rho$, as specified in \Cref{sec:sncp_genomics}, which let the
model adapt to the observed data. With these hyperpriors, the predictive laws of both $C_{m}^{(n)}$ (the number of new clusters) and $K_m^{(n)}$ (the number of new features) depend on the whole sampling information, that is the feature frequencies $\bm m$ and the feature labels $\bm x^*$.
In order to perform posterior inference on the quantities of interest like $K_m^{(n)}$ and $C_m^{(n)}$, the \textsc{mcmc} algorithm needs to sample from the joint posterior distribution of $\bm T$, $\Gamma^{(0)}$, $(\theta_{\bm x^*_h}, \gamma_{\bm x^*_h}: h = 1,\ldots, C_n )$, and the parameters $\tau_0,b_0, c, \alpha, \beta$. The details of the \textsc{mcmc} scheme are reported in \Cref{app:mcmc_sncp}.

\subsection{Analysis of genomic variants}\label{sec:sncp_genomics}

We consider an application to data on genomic variants for patients affected with Glioblastoma cancer, publicly available as part of the cancer genome atlas (\textsc{tcga}) at \url{https://portal.gdc.cancer.gov}. The data records information for total of 221 subjects, displaying 469 distinct genomic variants.
Such data were previously analyzed by \cite{Mas(22)} and \cite{Cam(23)} using a Bayesian feature model with a completely random measure and scaled process priors, respectively.
In both cases, data were represented as $Z_i = \sum_{j \geq 1} A_{i, j} \delta_{X_j}$ where $(X_j)_{j \geq 1}$ are the unique ids associated to each genomic variant. 
Crucially, \cite{Mas(22)} and \cite{Cam(23)} treated genomic variants as irrelevant labels, discarding the information encoded in the embeddings.

Instead, as discussed at the beginning of this section, we follow the pipeline of \cite{niu2025incorporating} to associate with each genomic variant a high-dimensional Euclidean vector called \emph{embedding}.
Specifically, we used annotations from the FAVOR database and a publicly available sentence transformer (\texttt{all-MiniLM-L6-v2} from HuggingFace) to obtain the variant embeddings as euclidean vectors in a 384-dimensional space.
As common practice when analyzing such embeddings, we then performed principal component analysis on the embeddings to reduce the dimensionality to 5, which retained more than 85\% of the total variability. Figure \ref{fig:tcga_data} (left panel) shows the projection on the first two principal components of the embeddings variants, clearly showing three large clusters.
\begin{figure}
    \centering
    \includegraphics[width=0.33\linewidth]{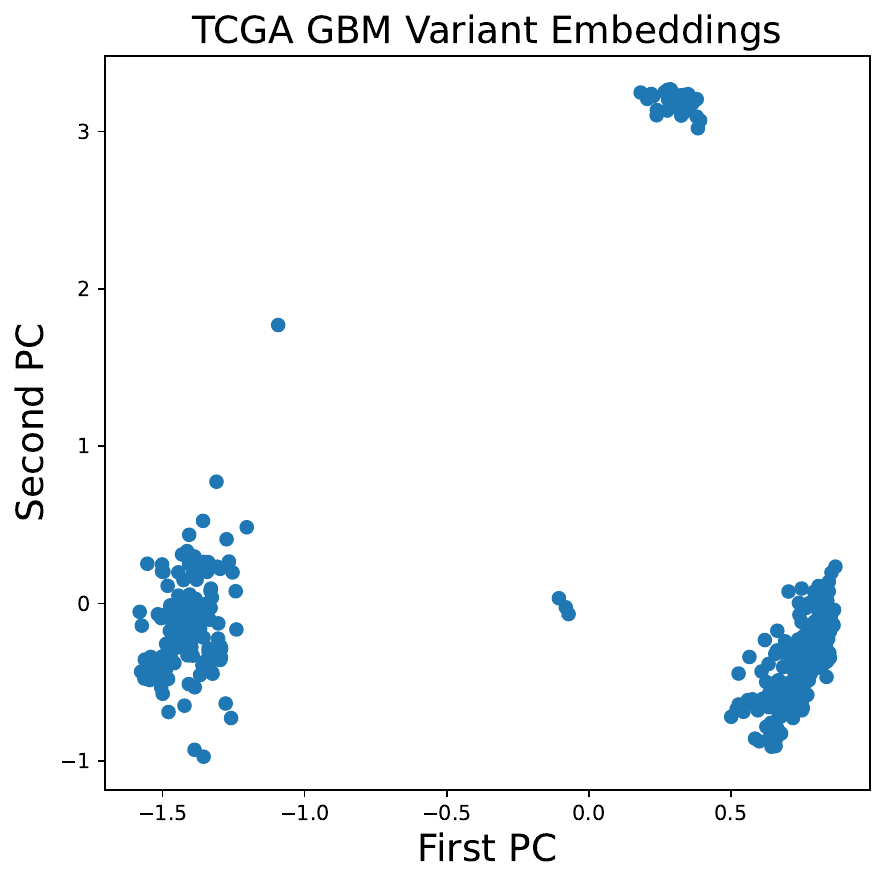}%
    \includegraphics[width=0.33\linewidth]{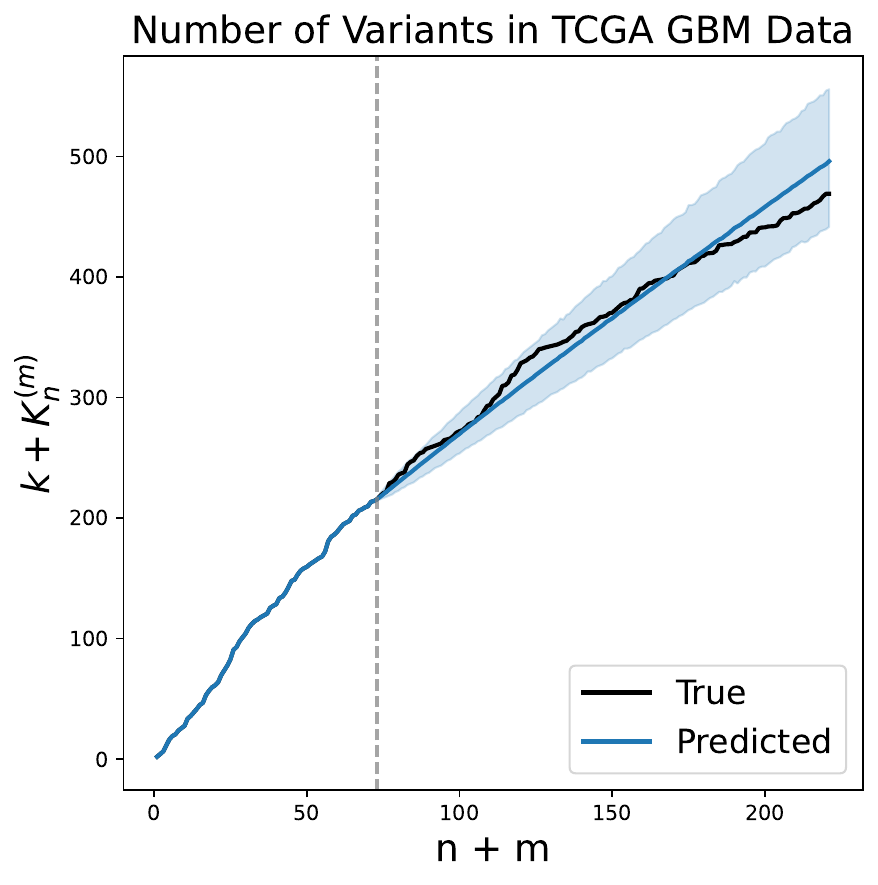}%
    \includegraphics[width=0.33\linewidth]{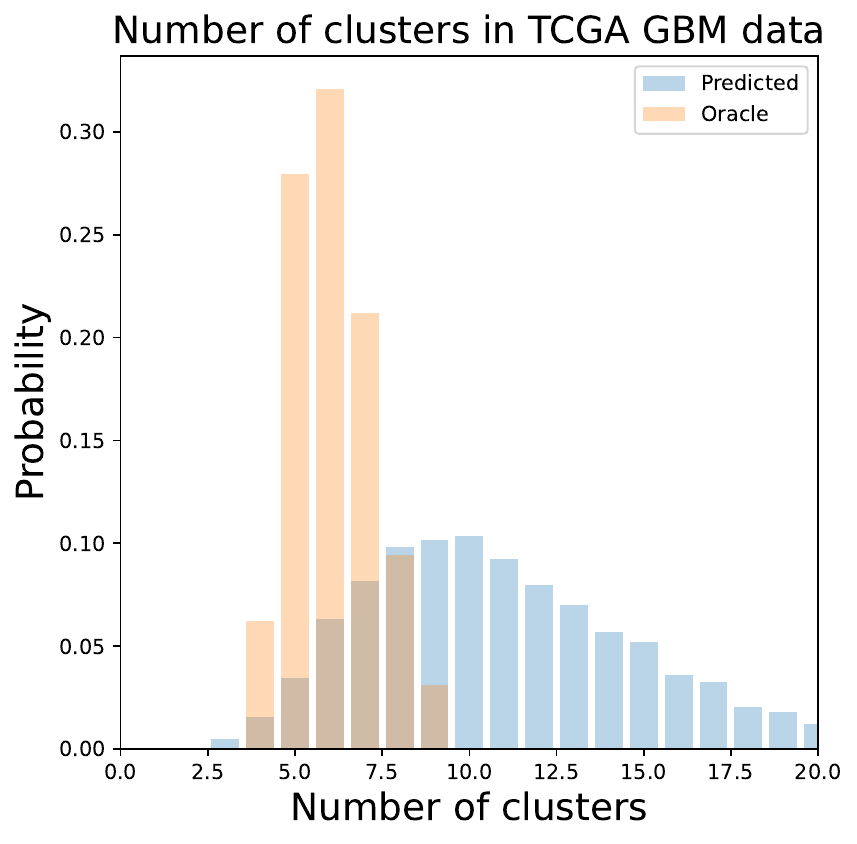}%
    \caption{(Left panel) Projection on the first two principal components of the embeddings variants. (Middle panel)  Prediction of $k + K_m^{(n)}$ as a function of $m$. (Right panel)  Posterior distribution of $C_m^{(n)}$ with $m = 221 - n$ (light blue), and ``oracle'' distribution obtained by fitting the model on the whole dataset (orange).}
    \label{fig:tcga_data}
\end{figure}

We apply our extended Gaussian-\textsc{ibp} model to this data, where $\X = \R^5$. We fix $(m_0, \lambda_0, a, b)$ as in \cite{fraley2007bayesian} and assume the following hyperpriors for all the remaining hyperparameters $
\tau_0 \sim \mathrm{Gamma}(2, 1), b_0 \sim \mathrm{Gamma}(1, 1), \alpha \sim \mathrm{Beta}(2, 2), c \sim \mathrm{Gamma}(2, 1), \beta \sim \mathrm{Gamma}(2, 1)$.

As a first check, we consider the problem of extrapolating the number of new genomic variants $K_m^{(n)}$ that would be found in an additional sample of size $m$. This is the original problem tackled by \cite{Mas(22)} and \cite{Cam(23)}. We use the first $n=73$ subjects as the sample $\bm Z$, on which we obtain the posterior inference via the \textsc{mcmc} algorithm. Then for $m=1, \ldots, 221 - n$ we form the predictions of $K_m^{(n)}$ via \Cref{prop:sncp_Kmn}.
Figure \ref{fig:tcga_data} (middle panel) shows the prediction of $k + K_m^{(n)}$ as a function of $m$. Clearly, our model correctly captures the power-law growth of $k + K_m^{(n)}$.
Similar results, although not shown here, are found with the methods in \cite{Mas(22)} and \cite{Cam(23)}.
More interestingly, our model allows us to form predictions for the number of clusters.
To this end, we evaluate the posterior distribution of $C_n + C_m^{(n)}$ with $m = 221 - n$ via \Cref{prop:sncp_Cmn}, and we compare to the ``oracle'' distribution of $C_{n+m}$ that is obtained by fitting the model on the whole dataset (Figure \ref{fig:tcga_data}, right panel).
Clearly, the predictions for the number of clusters in the additional sample agree with the oracle distribution, albeit with a slightly larger variance and a heavier right tail.
This result validates the use of the extended Gaussian-\textsc{ibp} for modeling these data.

Finally, we focus on the clustering of the genomic variants discovered by our model. To this end, we consider the full dataset, corresponding to the ``oracle'' clustering in the discussion above.  We summarize the clustering via the \texttt{salso} R package, obtaining a point estimate for the partition consisting of 5 clusters of sizes 275, 152, 39, 1, and 2, respectively. Focusing on the three largest clusters, we can appreciate how they identify variants with clearly different profiles in terms of predicted impact and genomic context, see Table \ref{tab:tcga}. 
In particular, we report in Table \ref{tab:tcga} the median value of a widely used “deleteriousness” score that aggregates many annotations into a single metric, with higher values suggesting more functionally disruptive changes \citep[CADD,][]{kircher2014framework};  the median population allele frequency (AF), a proxy for functional constraint (Conservation median) which measures how strongly the affected positions are preserved across species;
the fraction of variants overlapping so-called ``super-enhancers'', i.e., large clusters of regulatory elements controlling key genes \citep{hnisz2013superenhancers}, and the fraction overlapping known transcription-factor binding sites (TFBS).
The first cluster combines very rare variants (low AF), high CADD and high conservation in well-known cancer genes (TP53, EGFR, PTEN), consistent with functionally important, positively selected changes. 
The second cluster also targets coding regions but has much lower CADD and conservation and is enriched in long, repetitive genes (e.g., mucins), a pattern usually associated with passenger burden and technical confounding rather than strong functional effects \citep{lawrence2013mutational}. Cluster 3 has the highest CADD and conservation and is enriched for tumor-suppressor genes such as PTEN and NF1, with variants that are again rare but somewhat more recurrent across patients.

\begin{table}[htbp]
  \centering
  \begin{tabular}{c c c c c c c c}
    \toprule
     $n_{\text{variants}}$ & Top Genes &
    CADD & AF  & Conservation &
    Super-enh. & TFBS \\
    \midrule
     275 &
    \makecell{TP53, EGFR, PTEN \\ MMP3, CABP1} &
    25.6 & $1.40\times 10^{-5}$ & 0.963 &
    0.291 & 0.844 \\
     152 &
    \makecell{FLG, KLK6, SIGLEC10 \\ THSD7B, MUC3A, MUC17} &
    9.13 & $3.49\times 10^{-5}$ & 0.137 &
    0.184 & 0.697 \\
     39 &
    \makecell{PTEN, NF1, PTER \\ CASP1, ANO3} &
    36.0 & $1.08\times 10^{-5}$ & 0.986 &
    0.256 & 0.795 \\
    \bottomrule
  \end{tabular}
  \caption{Summary statistics for the three largest variant clusters.}
  \label{tab:tcga}
\end{table}

\section{Leveraging spatial regularity for feature discovery via repulsive priors}\label{sec:dpp_model_and_application}

Next, we consider a setting in which feature labels are expected to exhibit a repulsive dependence, again lying outside the regime characterized in \Cref{sec:predictive_characterization}.
In this section, the feature labels are tree locations recorded across repeated forest surveys. The inferential goals are twofold: to extrapolate how many trees remain unobserved and to identify where those missing trees are likely to be located. The second goal is outside the scope of standard feature allocation models, where labels are treated only as identifiers rather than spatial objects.
In this setting, spatial regularity, i.e. separation, is a natural assumption, since mature trees tend to spread throughout the forest to ensure adequate space for growth.
We therefore model the labels $X_j$'s via a repulsive point process. In \Cref{sec:ex_dpp} we develop an extended feature model based on determinantal point processes, which allows for feature discovery while capturing repulsive dependence among feature labels.  
We then present an ecological illustration in \Cref{sec:application_spatial} and \Cref{sec:spruces}.

\subsection{The independently marked (repulsive) determinantal process prior}\label{sec:ex_dpp}

To specify the dependence structure among the labels in our extended feature model, we assume that the feature labels $X_j$ arise as points of a determinantal point process (\textsc{dpp})   $\xi$ on a compact region $R \subset \R^d$ \citep{Hou(06), Lav(15)}. The \textsc{dpp} $\xi$  is specified by a covariance kernel $C: R \times R \rightarrow \mathbb C$, such that $M_{\xi}^{(k)}$ has density with respect to the $k$-fold product of the Lebesgue measure given by
\[
    \eta^{(k)}(x_1, \ldots, x_k) = \det\{C(x_h, x_w)_{h,w = 1,\ldots,k}\}, \qquad x_1, \ldots, x_k \in R,
\]
where $C(x_h, x_w)_{h,w = 1,\ldots,k}$ is the $k \times k$ matrix with entries $C(x_h, x_w)$. The main point process $\Psi$ in \eqref{eq:mu_definition} is obtained by associating to each feature label $X_j$ of $\xi$ its corresponding feature probability $S_j$ drawn independently as ${S}_j \mid {X}_j = x_j \sim H(\cdot \mid x_j)$, that is $\Psi$ is an \emph{independently marked point process with ground point process $\xi$ and mark kernel $H$}, according to \cite{BaBlaKa}.
We write for convenience $\Psi \sim \mathrm{imDPP}(C, H)$, where $C$ is the covariance kernel of the determinantal ground process and $H$ is the mark probability kernel.
As for any independently marked process, the $k$-th factorial moment measure equals $M_{\Psi}^{(k)}(\dd \bm x \, \dd \bm s) = M_{\xi}^{(k)}(\dd \bm x) \prod_{\ell=1}^k H(\dd s_\ell \mid x_\ell)$ and it is $\sigma$-finite, since $M_{\xi}^{(k)}$ is $\sigma$-finite. 

We also recall that the reduced Palm version of a \textsc{dpp} is still a \textsc{dpp}. Namely, for any $\bm x = (x_1, \ldots, x_k)$ such that the $x_j$'s are distinct, $\xi^!_{\bm x}$ is a \textsc{dpp} with kernel $K_{\bm x}(y_1,y_2) = C(y_1,y_2) - \tilde{c}_{\bm x}(y_1)^T \bm{\tilde{C}}^{-1} \tilde{c}_{\bm x}(y_2)$, for any $y_1,y_2 \in R$, where $\tilde{c}_{\bm x}(y) = ( C(y, x_1), \ldots, C(y, x_k))^T$ and $\bm{\tilde{C}} = C(x_h, x_w)_{h,w=1,\ldots,k}$. See, e.g., \cite{LavRub23} for further details. Therefore, the Bayesian analysis of the proposed construction easily follows from the results of Section \ref{sec:bnp}, and we can state the following.
\begin{corollary}[Bayesian analysis under the independently marked \textsc{dpp}] \label{thm:bayesian_dpp}
Consider a sample $\bm Z$ from the statistical model \eqref{eq:representation_theorem}, where $\mu$ is the functional of an independently marked point process $\Psi$, i.e., $\Psi \sim \mathrm{imDPP}(C, H)$, defined via \eqref{eq:mu_definition}. 
    \begin{enumerate}
        \item[(i)] The marginal distribution of the sample $\bm Z$ equals 
        \[
            \mathcal{L}_{\xi^!_{\bm x^*}} \left[ -\log\left\{\int_{(0,1]} (1-s)^n H(\dd s\mid x) \right\} \right] \prod_{\ell=1}^k \int_{(0,1]} s^{m_\ell} (1 - s)^{n - m_\ell} H(\dd s \mid x^*_\ell) \cdot M^{(k)}_\xi(\dd \bm x^*),
        \]
       where we remind that $ \mathcal{L}_{\xi^!_{\bm x^*}}$ denotes the Laplace functional of the \textsc{dpp} $\xi^!_{\bm x^*}$.
        \item[(ii)] The posterior distribution of $\mu$ satisfies the distributional equality in \eqref{eq:post_mu_sum}, where the weights $S^*_\ell$'s are independent random variables, further independent of $\mu^\prime$, with marginal density $f_{S^*_\ell}(\dd s) \propto s^{m_\ell} (1 - s)^{n - m_\ell} H(\dd s\mid x^*_\ell)$, as $\ell=1, \ldots , k$. 
        Moreover, $\mu^\prime$ in \eqref{eq:post_mu_sum} can be represented as $\mu^\prime = \sum_{j = 1}^{M^\prime} S^\prime_j \delta_{X^\prime_j}$, where the $X^\prime_j$'s are the atoms of a point process $\xi^\prime$ on $\X$ specified by the  Laplace functional        \begin{equation}\label{eq:laplace_xiprime}
        \mathcal L_{\xi^\prime}(f) = \frac{\mathcal L_{\xi^!_{\bm x^*}} \left\{ f - \log \int_{(0,1]} (1-s)^n H(\dd s\mid x) \right\}}{\mathcal L_{\xi^!_{\bm x^*}}\left\{ -\log\int_{(0,1]} (1-s)^n H(\dd s\mid x)  \right\}},    
        \end{equation}
        and the $S^\prime_j$'s are independent marks with conditional density $S^\prime_j \mid X^\prime_j = x^\prime_j \sim H^\prime(\, \cdot\,  \mid x^\prime_j)$, where $ H^\prime(\dd s \mid x^\prime_j) \propto (1 -s)^n H(\dd s \mid x^\prime_j)$.

        \item[(iii)] The predictive distribution of $Z_{n+1}$, given the sample $\bm Z$, satisfies the distributional equality in 
        \eqref{eq:pred_sum}, where $Z^\prime_{n+1}$ is a Bernoulli process with parameter $\mu^\prime$, and the $A_{n+1, \ell}^*$'s are independent Bernoulli random variables with parameters $S^*_\ell$'s, as $\ell = 1,\ldots, k$.
    \end{enumerate}
\end{corollary}

\Cref{thm:bayesian_dpp} shows where repulsion enters the posterior. For Poisson, mixed Poisson and mixed binomial priors, the unseen-feature component cannot use the observed locations; see also \Cref{app:examples_bayesian_analysis}. Under the \textsc{dpp} prior, the reduced Palm distribution of the ground process depends on the observed configuration $\bm x^*$. Unseen labels are therefore repelled by observed labels. The model can then estimate not only how many trees are unseen, but also where they may be located. Accordingly, $Z_{n+1}^\prime$ depends on the sample $\bm Z$ through $n$, $k$ and the feature labels $\bm x^*$, but not through the frequency counts.
We conclude with an example, where we focus on a specific choice of the distribution of the $S_j$'s.
\begin{example} \label{cor:dpp_beta}
    We consider $\Psi \sim \mathrm{imDPP}(C, H)$, where $H(\cdot \mid x) $ does not depend on $x$ and equals the beta distribution with parameters $(a,b)$, namely $S_j \iid \mathrm{Beta}(a, b)$, where $\mathrm{Beta}(a, b)$ denotes the beta distribution.
    In this particular case, the distributional results in \Cref{thm:bayesian_dpp} simplify. It is worth noticing that the posterior distribution of $\mu$ satisfies the distributional equality in \eqref{eq:post_mu_sum}, where each weight $S^*_\ell$ has a beta distribution 
    with parameters $(m_\ell +a,n-m_\ell+b)$, as $\ell=1, \ldots, k$. Moreover, the distribution of the point process $\xi^\prime$, and, as a consequence, of $\mu^\prime$ in \eqref{eq:post_mu_sum}, becomes much more manageable.  Indeed, from \eqref{eq:laplace_xiprime}, the distribution of $\xi^\prime$ has a density with respect to the distribution of $\xi^!_{\bm x^*}$ given by
    \[
           f_{\xi^\prime}(\nu) \propto \left\{\frac{B(a, b+n)}{B(a, b)}\right\}^{\nu(\X)} =: g(n; a, b)^{\nu(\X)}, 
   \]
   having denoted by $B (a,b)$ the Euler's beta function. The associated marks $S^\prime_j$'s are i.i.d. from a beta distribution with parameters $(a, b + n)$.
   In more detail, the distribution of the number of points in $\xi^\prime$,  denoted with $M^\prime$, has a density with respect to the distribution of $\xi^!_{\bm x^*}(\X)$ given by    \begin{equation}\label{eq:m_prime_dpp}
            f_{M^\prime}(m) \propto \left\{\frac{B(a, b+n)}{B(a, b)}\right\}^m.
        \end{equation}
        Finally, the mean measure $M_{\xi^\prime}$ has density  with respect to $M_{\xi^!_{\bm x^*}}$ defined as 
        \[
        m_{\xi^\prime} (y):= g(n; a, b) \E \left[ f_{\xi^\prime}\left\{ \xi^!_{(\bm x^*, y)} \right\} \right].
        \]
\end{example}

\subsection{Application to spatial statistics}\label{sec:application_spatial}

We revisit a classical problem proposed by \citet{ord78}, see also \cite{diggle2013statistical}.
Consider a forest containing an unobserved point configuration of trees. Each observation $Z_i$ is a partial survey, namely the set of tree locations recorded by the $i$-th surveyor. The original formulation in \citet{ord78} uses a single survey and focuses on the total number of trees. Here we allow repeated surveys and ask a richer question: how many trees remain unobserved, and where are they likely to stand?

Formally, we assume that $(Z_i)_{i\geq 1}$ follows \eqref{eq:representation_theorem}, where $\mu = \sum_{j \geq 1} S_j \delta_{X_j}$, and the $X_j$'s correspond to the locations of the trees in the forest, whereas the $S_j$'s are the tree-specific probabilities of observing tree $j$ in any survey.
We denote the process of all the trees by $\xi = \sum_{j \geq 1} \delta_{X_j}$.
To encode inhibition, we assume that $\xi$ is a \textsc{dpp} on a rectangular region $R \subset \mathbb R^2$, while, for simplicity, we let the $S_j$'s be i.i.d. beta random variables with common parameters $(a, b)$, as in \Cref{cor:dpp_beta}. Indeed, it is well understood that trees often exhibit repulsive behavior. 
For illustration, we assume that $\xi$ follows a Gaussian \textsc{dpp} \citep{Lav(15)} with parameters $(\rho, \alpha)$, subject to the condition $\rho < (\pi \alpha^2)^{-1}$ to ensure the process is well-defined. The covariance kernel of this Gaussian \textsc{dpp} is given by $C(x, y) = \rho \exp\{-\| (x - y) / \alpha \|^2 \}$. We refer to  \cite{MoWaBook03} for other examples of repulsive point processes.

Supposing we observe $n$ surveys $Z_1, \ldots, Z_n$, which together reveal the locations of $k$ distinct trees, we now address the problem of predicting the number and locations of the missing trees.
The posterior distribution of the total number of trees is equal to $M^\prime + k$, where the law of $M^\prime$ is given in \Cref{cor:dpp_beta}. In addition to estimating the number of trees in the forest, a natural and more challenging question is to locate the unobserved trees. 
With the notation of  \Cref{cor:dpp_beta}, the infinitesimal probability that an unobserved tree would occupy position $\dd x$ equals  $\E\{\Psi^\prime(\dd x \times (0, 1])\} = M_{\xi^\prime} (\dd x)$. Addressing both the count and location problems requires handling the distribution of $\xi^!_{\bm x}(\mathbb{X})$ for some suitable configurations $\bm x$; Appendix \ref{app:fitting_dpp_model} reports a detailed discussion on this point and related computational aspects.
We select the hyperparameters $(a,b,\rho,\alpha)$ by empirical Bayes, maximizing the marginal likelihood in point (i) of \Cref{thm:bayesian_dpp}. Appendix \ref{app:synthetic_dpp_model} reports synthetic experiments that illustrate the main behavior of the model.

\subsection{An illustration with Norwegian spruces}\label{sec:spruces}

\begin{figure}[]
    \centering
    \includegraphics[width=\linewidth]{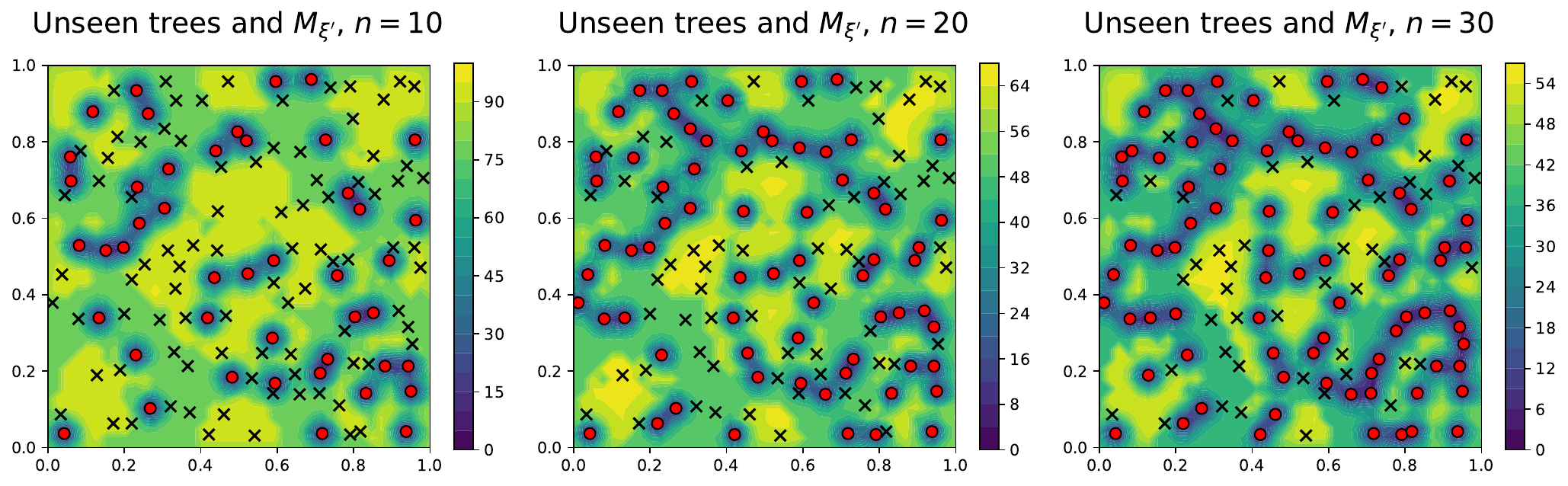}
    
    \caption{Locating the unobserved trees for $n \in \{10,20,30\}$ in the analysis of the spruces dataset of \Cref{sec:spruces}: infinitesimal probability of observing an unseen tree in a given location. The three plots report $M_{\xi^\prime}$ for the three sample sizes. The red dots represent the observed trees in the sample. The black crosses indicate the unseen trees. Note that the plots have different color scales.}
    \label{fig:mean_measure_application}
\end{figure}

We analyze the \texttt{spruces} dataset from the R package \texttt{spatstat}, which contains the spatial locations of 134 Norwegian spruce trees in a natural forest stand in Saxony, Germany.  We treat this full point pattern, denoted by $\xi_0$, as the ground-truth forest and generate repeated partial surveys from it. Specifically, each tree is assigned an i.i.d. detection probability $S_j \sim \mathrm{Beta}(1,20)$, and survey $i$ records tree $j$ independently with probability $S_j$. This semi-synthetic design is useful because it lets us assess both how well the model extrapolates the total number of trees and how accurately it localizes those not yet observed.

We consider samples of increasing sizes $n \in \{10, 20, 30\}$ and estimate model hyperparameters using the empirical Bayes approach.
We infer the total number of trees as $M^\prime + k$, where $M^\prime$ is defined in \Cref{cor:dpp_beta}.  Employing Le Cam's approximation (see Appendix \ref{app:fitting_dpp_model}), our model provides reliable predictions, although with a slight underestimation. Specifically, the expected values of $M^\prime + k$ are respectively $115, 111, 114$, for the increasing sample sizes. 
This systematic underestimation aligns with the comments of Figure \ref{fig:ntree_simulation}, where exact computations yielded more accurate results, but Le Cam's approximation is substantially faster.

The second target is the location of the missing trees. In our framework, this is summarized via $M_{\xi^\prime}$, as detailed in Section \ref{app:fitting_dpp_model}, which is reported in Figure \ref{fig:mean_measure_application}. Across all sample sizes, our predictions exhibit the peculiar repulsive structure highlighted in the simulated example. The regions predicted to most likely contain unseen trees align well with the locations of the actual unobserved trees, as indicated by the black crosses in the figure. The difference in plot scales arises from the reduction in the number of unseen trees as the sample size increases.
Crucially, the predicted locations are informed by the configuration of observed trees through the reduced Palm structure of \Cref{thm:bayesian_dpp}: an inferential gain that standard feature allocation models, treating labels only as identifiers, cannot deliver.

\section{Discussion}\label{sec:discussion}

We have developed a unified Bayesian framework for extended feature allocation models, in which feature labels and feature probabilities are modeled jointly. This yields a complete Bayesian analysis in which labels are themselves objects of inference. The sufficientness postulates of Section~\ref{sec:predictive_characterization} characterize the priors whose predictive distributions depend only on  $n$ or on $(n, K_n)$, providing both practical guidance for prior elicitation in the irrelevant-labels regime and a formal description of the limits of such standard models. These results also offer a bridge between the classical Bayesian paradigm, based on the specification of a prior and a likelihood, and the predictive approach to inference recently revived by \citet{Fong2023, Berti23}, in which the statistician directly specifies a system of predictive distributions reflecting subjective beliefs.\\
Beyond these characterizations, the central methodological contribution of the paper is to allow information encoded in feature labels to directly inform prediction. In particular, the cluster Cox process and determinantal point process models developed in Sections~\ref{sec:sncp_model_and_application} and \ref{sec:dpp_model_and_application} illustrate this idea in settings involving clustering and repulsion among labels, respectively. More generally, the proposed framework extends naturally to problems involving latent features, such as image segmentation, latent factor models, and related constructions, where structured priors over features may yield more interpretable and realistic inference. In such settings, the Bayesian theory developed here provides the foundation for posterior inference.

Our framework suggests several directions for future research.
First, the Cox process construction of Section~\ref{sec:sncp_model_and_application} can be adapted to settings with hierarchical groupings, for instance, biological taxa nested by genus and family, yielding enriched feature allocation models analogous to those developed in the mixture setting by \citet{wade2014improving}. 
Second,  \textsc{dpp}-based construction of Section~\ref{sec:dpp_model_and_application} could be extended to high-dimensional settings, making it suitable as a latent structure in factor models, following the same lines of \cite{ghilotti2025DPP}. 
Beyond extensions of these two models, other broader generalizations appear promising. 
Indeed, we envision an extension to trait allocations \citep{Campbell(18)}, where each feature is endowed with an expression level: we expect that our theorems extend to the trait allocation setting by virtue of our Palm-calculus-based framework and by adopting the spike-and-slab formulation of trait allocations in \cite{Jam(17)}.
We also plan to explore more complex models for partially exchangeable data, both generalizing the hierarchical beta process of \cite{Thi(07)} \citep[further developed by][]{james2023bayesian} and proposing alternative prior distributions inspired by the rich literature on partially exchangeable priors for species sampling models.
For instance, one could develop analogous of nested \citep{rodriguez_ndp}, additive \citep{nipoti_add} and compound \citep{griffin_compound} processes, as well as draw inspiration from the more general constructions in \cite{ascolani_24, franzolini_23, beraha_griffin} 
to design new models in the feature setting. Work on these problems and related ones is ongoing.

\section*{Acknowledgement}
FC is supported by the European Union – Next Generation EU funds, component M4C2, investment 1.1., PRIN-PNRR 2022 (P2022H5WZ9). LG gratefully acknowledges support from the Office of Naval Research (N00014-24-1-2626-P00002) and NIGMS through grant R01GM163225. The authors would like to thank Andrea Gilardi (University of Milano-Bicocca) and Jesper M\o ller (Aalborg University) for helpful discussions and suggestions.

\begin{footnotesize}
%
\begin{singlespace}
\bibliography{references}
\end{singlespace}

\end{footnotesize}


\clearpage

\setcounter{equation}{0}
\renewcommand\theequation{S\arabic{equation}}
\renewcommand\theHequation{S\arabic{equation}}

\setcounter{remark}{0}
\renewcommand\theremark{S\arabic{remark}}
\renewcommand\theHremark{S\arabic{remark}}

\setcounter{theorem}{0}
\renewcommand\thetheorem{S\arabic{theorem}}
\renewcommand\theHtheorem{S\arabic{theorem}}

\setcounter{corollary}{0}
\renewcommand\thecorollary{S\arabic{corollary}}
\renewcommand\theHcorollary{S\arabic{corollary}}

\setcounter{proposition}{0}
\renewcommand\theproposition{S\arabic{proposition}}
\renewcommand\theHproposition{S\arabic{proposition}}

\setcounter{lemma}{0}
\renewcommand\thelemma{S\arabic{lemma}}
\renewcommand\theHlemma{S\arabic{lemma}}

\setcounter{figure}{0}
\renewcommand\thefigure{S\arabic{figure}}
\renewcommand\theHfigure{S\arabic{figure}}

\setcounter{table}{0}
\renewcommand\thetable{S\arabic{table}}
\renewcommand\theHtable{S\arabic{table}}

\setcounter{section}{0}
\renewcommand{\thesection}{S\arabic{section}}
\renewcommand{\theHsection}{S\arabic{section}}

\begin{center}
   \LARGE Supplementary material for:\\
   ``Extended feature allocation models''
\end{center}

\section*{Organization of the supplementary material}

The supplementary material is structured as follows. \Cref{app:point_process_definition} provides some mathematical background on point processes. \Cref{app:results_extended_feature} presents useful results on extended feature allocation models, which are referenced in \Cref{rem:finite_features} and throughout the paper. \Cref{app:mb_results} provides additional results on the class of mixed binomial processes. \Cref{app:general_res_palm} recalls a key formula for working with Palm distributions, namely the Campbell-Little-Mecke (\textsc{clm}) formula, as well as a fundamental characterization result for mixed Poisson and mixed binomial processes, as stated in \cite[Theorem 5.3]{Kal(73)}. \Cref{app:proofs_main_theorems} contains the proofs of Theorems \ref{thm:marg} and \ref{thm:post}, which establish the full Bayesian analysis presented in \Cref{sec:general_bayesian_analysis}. \Cref{app:pred_characterization} includes proofs for all results related to the sufficientness postulates discussed in \Cref{sec:predictive_characterization}. 
\Cref{app:sncp_appendix} contains proofs, additional results and details of the posterior sampling algorithm for the Cox process prior model of \Cref{sec:sncp_model_and_application}.
\Cref{app:dpp_prior_model} contains proofs and additional details for the independently marked determinantal process prior of \Cref{sec:dpp_model_and_application}, including discussion about the fitting approach and numerical implementation, and  synthetic simulation studies. In \Cref{app:examples_bayesian_analysis}, we analyze some other notable classes of extended feature models, based on mixed Poisson, and mixed binomial
process priors. The proofs for such results are reported in \Cref{app:examples_computation}.

To facilitate the reading of the supplementary material, we recall the \emph{extended} feature allocation model for the exchangeable sequence of observations $(Z_i)_{i\geq 1}$, presented in model \eqref{eq:representation_theorem} of the main text. In particular, we consider $Z_i = \sum_{j \geq 1} A_{i,j} \delta_{X_j}$, for $i \geq 1$, and the statistical model is given by
\begin{equation}\label{eq:main_model_appendix}
\begin{aligned}
    Z_i \mid \mu & \iid \mathrm{BeP}(\mu),\\
        \mu & \sim \mathscr Q,
\end{aligned}    
\end{equation}
where $\mathscr Q$ denotes the law of the random measure $\mu$, which is a functional of the point process $\Psi = \sum_{j\geq 1} \delta_{(X_j,S_j)}$ on $\X \times (0,1]$. Specifically, $\mu$ is defined as  
\begin{equation}\label{eq:functional_appendix}
    \mu(B) = \int_{\X \times (0,1]} s \indicator_B (x ) \Psi(\dd x \, \dd s), \quad B \in \Xcr.
\end{equation}

\section{Mathematical background on point processes} \label{app:point_process_definition}

Here, we provide the formal definition of a point process.
Indicate by $\X$  a Polish space equipped with the corresponding Borel $\sigma$-algebra $\Xcr$. 
We say that a measure $\nu$ on $(\X, \Xcr)$ is \textit{locally finite} if $\nu(B) < \infty$ for any relative compact set $B \in \Xcr$.
Denote by $(\mathbb M_\X, \mathcal M_\X)$ the space of locally finite counting measures on $(\X, \Xcr)$, equipped with the corresponding Borel $\sigma$-algebra $\mathcal M_\X$.
A point process $\Phi$ on the space $\X$ is a measurable map from an underlying probability space $(\Omega, \Acr, \Pp)$ taking values in $(\mathbb M_\X, \mathcal M_\X)$ so that $\Phi(B)$ is a nonnegative integer almost surely (a.s.), for any relatively compact Borel set $B$.
Let $\plaw_{\Phi}:= \Pp \circ \Phi^{-1}$ denote the probability distribution of  $\Phi$, which is uniquely characterized by the Laplace functional $\mathcal{L}_{\Phi}(f):= \E [\exp \{ - \int_\X f (x) \Phi(\dd x)\}] $, for any measurable function $f: \X \to \R_+$. Notably, any point process $\Phi$ can be represented as $\Phi = \sum_{j\geq 1} \delta_{X_j}$, where $(X_j)_{j\geq 1}$ is a sequence of random variables taking values in $\X$, and  $\delta_{X_j}$ denotes the Dirac delta mass at $X_j$.
In the paper, we deal with \emph{simple} point processes, for which the atoms of $\Phi$ are a.s. distinct, i.e.,   $\Pp (X_i = X_j)=0$ for all $i \not = j$.

To define the \emph{Palm distributions}, we introduce the Campbell measure of $\Phi$ as the measure on $\Xcr \times \mathcal{M}_\X$ defined by $\mathscr C_\Phi(C \times L) : = \E[\Phi(C) \indicator(\Phi \in L) ]$ for $C \in \Xcr$ and $L \in \mathcal M_\X$.  Provided that the mean measure $M_\Phi$ is $\sigma$-finite, the Palm kernel $\{\plaw_{\Phi}^x\}_{x \in \X}$ of $\Phi$ is defined as the (a.s.) unique disintegration probability kernel of $\mathscr C_\Phi$ with respect to $M_\Phi$, i.e., 
\[
    \mathscr C_\Phi(C \times L) = \int_{C} \plaw_{\Phi}^x(L) M_{\Phi}(\dd x).
\]

In a similar fashion, under the assumption that the $k$-th factorial moment measure $M_\Phi^{(k)}$ is $\sigma$-finite, it is possible to construct the family of  $k$-th order Palm distributions $\{\plaw_{\Phi}^{\bm x}\}_{\bm x \in \X^k}$, and the generic probability measure $\plaw_{\Phi}^{\bm x}$ can be interpreted as the distribution of $\Phi$ given that $\bm x = (x_1, \ldots, x_k)$ are atoms of $\Phi$. 
Again, by removing the trivial atoms $(x_1, \ldots , x_k)$, we obtain the reduced Palm distributions $\plaw_{\Phi^!}^{\bm x}$, namely the probability law of
\[
\Phi^!_{\bm x} : = \Phi_{\bm x} -\sum_{j =1}^k \delta_{x_j}.
\]

\section{Some useful results on extended feature allocation models}\label{app:results_extended_feature}

We start by providing some considerations about the common assumption for feature allocation models (which we retain for the extended case) imposing that any subject displays a.s. a finite number of features, i.e., $Z_i(\X) < \infty$ a.s..
\begin{remark}\label{rem:finite_features}
The property $Z_i(\X) < \infty$ a.s.  is always guaranteed when $\Psi$ has a.s. a finite number of points, for example, when $\Psi$ is a mixed binomial process. Some considerations are needed for priors which entail an infinite number of points for $\Psi$. Since $Z_i$ is obtained from thinning $\Psi$ and discarding the second component, it follows that the mean measure of any $Z_i$ is $M_Z(B) = \int_{(0,1]} s M_\Psi(B \times \dd s)$, $B \in \Xcr$. Therefore, a sufficient condition for $Z_i(\X) < \infty$ a.s. stems from $\E [Z_i(\X)] < \infty$, which corresponds to $\int_{\X\times (0,1]} s M_{\Psi}(\dd x \, \dd s) < \infty$. However,  this is not a necessary condition in general. 
It can be proved that  a necessary condition on the mean measure of $\Psi$ to have  $Z_i(\X) < \infty$ a.s. is the following:
\[
\int_{\X\times (0,1]} (1 - s) M_{\Psi}(\dd x \, \dd s) = \infty.\] 
See \Cref{prop:nec_condition} for the formal statement and proof. Notably, if $\Psi$ is a Poisson process with (infinite) mean measure $\nu$ or a mixed Poisson $\mathrm{MP}(\nu, f_\gamma)$, then the condition $\int_{\X\times(0,1]} s \nu(\dd x \, \dd s) < \infty$ is both necessary and sufficient for $Z_i(\X) < \infty$ a.s., as stated in \Cref{prop:nec_suff_poisson}. We defer to \Cref{prop:poisson_details} further implications of the assumption $Z_i(\X) < \infty$ a.s. under Poisson processes. 
\end{remark}

The following proposition formally states the necessary condition, discussed in \Cref{rem:finite_features}, on the mean measure of $\Psi$ to ensure $Z_i(\X)<\infty$ a.s. for any generic element $Z_i$ in the sequence $(Z_i)_{i\geq 1}$ defined in \eqref{eq:main_model_appendix}.

\begin{proposition}\label{prop:nec_condition}
Let $Z_i$ be the generic element of the sequence $(Z_i)_{i\geq1}$ defined in \eqref{eq:main_model_appendix}, where $\mu$ is the functional of a point process $\Psi$ defined via \eqref{eq:functional_appendix}. Assume that $\Psi$ has infinite points a.s. and let $M_{\Psi}$ denote the mean measure of $\Psi$. If $Z_i(\X) < \infty$ a.s., then
\[
\int_{\X\times (0,1]} (1 - s) M_{\Psi}(\dd x \, \dd s) = \infty.\]
\end{proposition}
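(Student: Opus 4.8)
The plan is to argue by contraposition. Assume that $\Psi$ has infinitely many atoms almost surely and that
\[
\int_{\X\times(0,1]}(1-s)\,M_{\Psi}(\dd x\,\dd s) < \infty ,
\]
and show that this forces $Z_i(\X)=\infty$ almost surely, contradicting the hypothesis $Z_i(\X)<\infty$ a.s.

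First I would translate the integral into a probabilistic statement. Writing $\Psi=\sum_{j\geq1}\delta_{(X_j,S_j)}$, Campbell's formula (equivalently, the definition of the mean measure extended to nonnegative measurable functions via monotone convergence) gives
\[
\int_{\X\times(0,1]}(1-s)\,M_{\Psi}(\dd x\,\dd s) = \E\Big[\sum_{j\geq1}(1-S_j)\Big].
\]
Hence, under the standing assumption, $\E\big[\sum_{j\geq1}(1-S_j)\big]<\infty$, so $\sum_{j\geq1}(1-S_j)<\infty$ $\Pp$-almost surely.

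The key step is then a conditional Borel--Cantelli argument. By the very construction of the model, conditionally on $\Psi$ (equivalently, on the sequence $(X_j,S_j)_{j\geq1}$) the indicators $A_{i,j}$ are independent Bernoulli random variables with $\Pp(A_{i,j}=0\mid \Psi)=1-S_j$. On the almost sure event $\{\sum_{j\geq1}(1-S_j)<\infty\}$, the conditionally independent events $\{A_{i,j}=0\}_{j\geq1}$ have summable conditional probabilities, so applying the first Borel--Cantelli lemma under the regular conditional distribution of $(A_{i,j})_{j\geq1}$ given $\Psi$ yields that, $\Pp$-almost surely, only finitely many $A_{i,j}$ vanish. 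Since $\Psi$ has infinitely many atoms a.s., it follows that $A_{i,j}=1$ for all but finitely many $j$, and therefore $Z_i(\X)=\sum_{j\geq1}A_{i,j}=\infty$ almost surely. This contradicts $Z_i(\X)<\infty$ a.s. and completes the proof.

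I do not expect a genuine obstacle here: the argument is short, and the only point requiring mild care is the measure-theoretic bookkeeping in the conditional Borel--Cantelli step, i.e.\ passing through a regular conditional law of $(A_{i,j})_{j\geq1}$ given $\Psi$ and then integrating out $\Psi$. This is routine given that, by assumption, the $A_{i,j}$ are conditionally i.i.d.\ Bernoulli with parameters $S_j$.
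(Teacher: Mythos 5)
Your proof is correct and follows essentially the same route as the paper: both arguments reduce the claim to the first Borel--Cantelli lemma applied to the events $\{A_{i,j}=0\}$, whose probabilities sum to $\int_{\X\times(0,1]}(1-s)\,M_{\Psi}(\dd x\,\dd s)$ by Campbell's formula. The only cosmetic difference is that the paper applies Borel--Cantelli unconditionally (so no conditioning on $\Psi$ and no independence is needed), whereas you pass through the conditional law given $\Psi$; both are valid and yield the same conclusion.
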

\begin{proof}
We proceed by proving that $\int_{\X\times (0,1]} (1 - s) M_{\Psi}(\dd x \, \dd s) < \infty$ implies $\Pp(Z_i(\X) < \infty) = 0$. Indeed, define the sequence of events $V_j = \{A_{i,j} = 0\} \in \Acr$, $j \geq 1$ and observe that the event $Z_i(\X) < \infty$ coincides with the event $\liminf V_j$. We now prove that $\Pp(\liminf V_j) \leq \Pp(\limsup V_j) = 0$. Consider
\begin{equation*}
        \sum_{j\geq 1} \Pp(V_j) = \sum_{j\geq 1} \Pp(A_{i,j} = 0) = \sum_{j\geq 1} \E(1 - S_j) =  \E \sum_{j\geq 1} (1 - S_j),  
\end{equation*}
and define the process $\mathcal{T} = \sum_{j\geq 1} \delta_{(X_j, \pi_j)}$ on $\X \times [0,1)$, where $\pi_j = 1 - S_j$. We have that 
\begin{equation}\label{eq:borel_cantelli_V_j}
    \sum_{j\geq 1} \Pp(V_j) =  \E \sum_{j\geq 1} \pi_j = \E \left\{\int_{\X \times [0,1)} t  \mathcal{T}(\dd x \, \dd t)\right\} = \int_{\X \times [0,1)} t  M_{\mathcal{T}}(\dd x \, \dd t),
\end{equation}
where the last equality follows from Campbell averaging formula. Now, defining the function $g: \X\times(0,1] \rightarrow \X\times[0,1)$ as $g(x,s) = (x, 1-s)$, we have that $\mathcal{T} = \Psi \circ g^{-1}$ is the image of $\Psi$ by $g$. Consequently, $M_{\mathcal{T}}(\dd x \, \dd t) = M_{\Psi}(g^{-1}(\dd x, \dd t)) = M_{\Psi}(\dd x \, \dd s)$, where $s = 1 -t$. Thus, it follows that
\begin{equation}\label{eq:integral_equality}
\int_{\X \times [0,1)} t  M_{\mathcal{T}}(\dd x \, \dd t) = \int_{\X \times (0,1]} (1-s)  M_{\Psi}(\dd x \, \dd s) < \infty,    
\end{equation}
where the inequality holds by hypothesis. Therefore, from \eqref{eq:borel_cantelli_V_j} and \eqref{eq:integral_equality}, we have that $\sum_{j\geq 1} \Pp(V_j)$ $ < \infty$. By applying the Borel-Cantelli lemma, we obtain $\Pp(\limsup V_j) = 0$ and the proof is complete.
\end{proof}

The following proposition establishes the necessary and sufficient condition, also discussed in \Cref{rem:finite_features}, on the mean measure of $\Psi$ to ensure that $Z_i(\X)<\infty$ a.s., under the assumption that $\Psi$ is a Poisson or a mixed Poisson process. In particular, when $\Psi$ follows either of these processes, the necessary condition given in \Cref{prop:nec_condition} is also sufficient.

\begin{proposition}\label{prop:nec_suff_poisson}
Let $Z_i$ be the generic element of the sequence $(Z_i)_{i\geq1}$ defined in \eqref{eq:main_model_appendix}, where $\mu$ is the functional of a point process $\Psi$ defined via \eqref{eq:functional_appendix}. Assume $\Psi$ is a Poisson process with (infinite) mean measure $\nu$ or a mixed Poisson process $\mathrm{MP}(\nu, f_\gamma)$. Then, $Z_i(\X) < \infty$ a.s. if and only if $\int_{\X\times(0,1]} s \nu(\dd x \, \dd s) < \infty$.    
\end{proposition}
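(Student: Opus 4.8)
The plan is to separate the claim into a model-independent reduction and a Poisson-specific computation. First I would show that, regardless of the prior $\mathscr Q$, one has $Z_i(\X)<\infty$ a.s.\ if and only if $\mu(\X)<\infty$ a.s. Working conditionally on $\Psi$, the thinning construction of $\mathrm{BeP}(\mu)$ makes the indicators $(A_{i,j})_{j\ge 1}$ independent Bernoulli variables with $\Pp(A_{i,j}=1\mid \Psi)=S_j$, so that $\sum_{j\ge 1}\Pp(A_{i,j}=1\mid\Psi)=\sum_{j\ge1}S_j=\mu(\X)$ (recall $\Psi$ is simple, hence $Z_i(\X)=\sum_{j\ge1}A_{i,j}$). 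By the first Borel--Cantelli lemma, $\mu(\X)<\infty$ forces $A_{i,j}=1$ only finitely often, i.e.\ $Z_i(\X)<\infty$ a.s.; by the converse Borel--Cantelli lemma, applicable because the events are conditionally independent, $\mu(\X)=\infty$ forces $Z_i(\X)=\infty$ a.s. Taking the $\Psi$-expectation of $\indicator\{Z_i(\X)<\infty\}$ then gives $\Pp(Z_i(\X)<\infty)=\Pp(\mu(\X)<\infty)$, which is the desired reduction.

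It then remains to decide when $\mu(\X)=\int_{\X\times(0,1]}s\,\Psi(\dd x\,\dd s)<\infty$ a.s., first for $\Psi\sim\mathrm{PP}(\nu)$. Applying the Laplace functional of the Poisson process to the test function $(x,s)\mapsto\theta s$, for every $\theta>0$ one gets
\[
    \E\big[e^{-\theta\mu(\X)}\big]=\exp\Big\{-\int_{\X\times(0,1]}\big(1-e^{-\theta s}\big)\,\nu(\dd x\,\dd s)\Big\}.
\]
Since $(1-e^{-\theta})\,s\le 1-e^{-\theta s}\le\theta s$ for $s\in(0,1]$, the exponent is finite exactly when $\int_{\X\times(0,1]}s\,\nu(\dd x\,\dd s)<\infty$. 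In that case $\E[\mu(\X)]=\int s\,\nu<\infty$ by Campbell's averaging formula, so $\mu(\X)<\infty$ a.s.; otherwise $\E[e^{-\theta\mu(\X)}]=0$ for all $\theta>0$, and since $e^{-\theta\mu(\X)}\ge 0$ this forces $\mu(\X)=+\infty$ a.s. Together with the reduction, this proves the equivalence in the Poisson case.

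For the mixed Poisson case $\Psi\sim\mathrm{MP}(\nu,f_\gamma)$ I would condition on $\gamma$, which is a.s.\ finite and strictly positive. Since $\Psi\mid\gamma\sim\mathrm{PP}(\gamma\nu)$, the same computation yields $\E[e^{-\theta\mu(\X)}\mid\gamma]=\exp\{-\gamma\int(1-e^{-\theta s})\,\nu\}$. If $\int s\,\nu<\infty$, then $\E[\mu(\X)\mid\gamma]=\gamma\int s\,\nu<\infty$ a.s., hence $\mu(\X)<\infty$ a.s.; if $\int s\,\nu=\infty$, then $\int(1-e^{-\theta s})\,\nu=\infty$ and $\gamma>0$ a.s., so $\E[e^{-\theta\mu(\X)}\mid\gamma]=0$ a.s.\ and therefore $\mu(\X)=\infty$ a.s. Invoking the reduction once more closes the argument.

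The step requiring the most care is the necessity direction ($Z_i(\X)<\infty$ a.s.\ $\Rightarrow\int s\,\nu<\infty$): it rests on the converse Borel--Cantelli lemma in the reduction, which in turn relies on the conditional independence of the $A_{i,j}$'s given $\Psi$, and — in the mixed Poisson case — on conditioning on $\gamma$ \emph{before} evaluating the Laplace exponent, so that no moment assumption on $\gamma$ is needed (unlike the finite-moment hypotheses imposed elsewhere in the paper). Everything else is a routine Laplace-functional computation.
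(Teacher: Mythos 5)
Your proof is correct, and both arguments ultimately rest on the same two ingredients as the paper's: the explicit Laplace functional of the Poisson process applied to a linear-in-$s$ test function, and conditioning on $\gamma$ to transfer the result to the mixed Poisson case. The difference lies in the decomposition. The paper works directly with the observable quantity, computing $\E\{e^{-Z_i(\X)}\}$ by first averaging the Bernoulli thinning given $\mu$ (yielding $\E[\prod_j\{1-S_j(1-e^{-1})\}]$) and then applying the Poisson Laplace functional, while handling sufficiency separately via $\E\{Z_i(\X)\}=\int s\,\nu<\infty$. You instead insert a prior-independent reduction, $Z_i(\X)<\infty$ a.s.\ if and only if $\mu(\X)<\infty$ a.s., proved by a two-sided (conditional) Borel--Cantelli argument, and only then analyze $\mu(\X)$ under the Poisson law. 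Your reduction is a genuinely reusable statement that the paper does not isolate -- it holds for an arbitrary prior $\mathscr Q$ and cleanly separates the likelihood-level thinning from the prior-level computation -- at the cost of invoking the converse Borel--Cantelli lemma, which the paper's direct Laplace-transform evaluation of $Z_i(\X)$ sidesteps. Your observation that the mixed case needs no moment assumptions on $\gamma$ (only a.s.\ positivity and finiteness) matches the paper, which likewise conditions on $\gamma$ before evaluating the exponent.
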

\begin{proof}
We start by proving the statement for $\Psi$ distributed as a Poisson process with mean measure $\nu$. As discussed in \Cref{rem:finite_features}, the condition $\int_{\X\times (0,1]} s M_{\Psi}(\dd x \, \dd s) < \infty$ is sufficient for $Z_i(\X) < \infty$ a.s.. Under the Poisson assumption for $\Psi$, this sufficient condition writes as $\int_{\X\times(0,1]} s \nu(\dd x \, \dd s) < \infty$. We are left to show the inverse implication. Consider the Laplace transform of the random variable $Z_i(\X)$ evaluated in $1$, 
\begin{equation*}
\begin{aligned}
    \E\left\{ e^{-Z_i(\X)} \right\} &= \E\left[ \E\left\{ e^{-Z_i(\X)} \mid \mu  \right\} \right] = \E\left[ \prod_{j \geq 1} \left\{ 1 - S_j ( 1 - e^{-1})  \right\} \right]\\
    &= \E\left( \exp\left[ \sum_{j\geq 1} \log\left\{ 1 - S_j ( 1 - e^{-1}) \right\} \right] \right)\\
    &= \exp\left\{ -  ( 1 - e^{-1}) \int_{\X\times(0,1]} s \nu(\dd x \, \dd s)\right\}.
\end{aligned}
\end{equation*}
Therefore, if $\int_{\X\times(0,1]} s \nu(\dd x \, \dd s) = \infty$, then $ \E\left\{ e^{-Z_i(\X)} \right\} = 0$ and $Z_i(\X) = \infty$ a.s. and the proof is complete.

To prove the result when $\Psi$ is a mixed Poisson process $\mathrm{MP}(\nu, f_\gamma)$, we leverage the result just established for the Poisson process. In particular, $Z_i(\X) < \infty$ a.s. is equivalent to 
\[
1 = \Pp\left( Z_i(\X) < \infty \right) = \E\left\{ \Pp\left( Z_i(\X) < \infty \mid \gamma \right) \right\},
\]
which holds true if and only if $\Pp\left( Z_i(\X) < \infty \mid \gamma \right) = 1$ a.s.. Conditionally to $\gamma$, the process $\Psi$ is a Poisson process with mean measure $\gamma\nu$. Thus, leveraging on the result just shown for the Poisson case, it holds that $\Pp\left( Z_i(\X) < \infty \mid \gamma \right) = 1$ if and only if $\nu$ satisfies the condition stated for a Poisson process, namely $\int_{\X\times(0,1]} s \nu(\dd x \, \dd s) < \infty$. 
\end{proof}

The next proposition examines notable consequences of satisfying the condition $Z_i(\X) < \infty$ a.s. under the assumption that $\Psi$ is a Poisson process. 
\begin{proposition}\label{prop:poisson_details}
Let $Z_i$ be the generic element of the sequence $(Z_i)_{i\geq1}$ defined in \eqref{eq:main_model_appendix}, where $\mu$ is the functional of a point process $\Psi$ defined via \eqref{eq:functional_appendix} and $\Psi$ is a Poisson process with (infinite) mean measure $\nu$. If $Z_i(\X) < \infty$ a.s., then:
\begin{enumerate}
    \item[(i)] the $k$-th factorial moment measure $M_\Psi^{(k)} = \nu^k$ is $\sigma$-finite, for any $k$;

    \item[(ii)] the following disintegration holds: $\nu(\dd x \, \dd s) = \kappa(\dd x \mid s) A_0(\dd s)$, where $\kappa$ is a probability kernel from $(0,1]$ to $\X$ and $A_0(\dd s) := \nu(\X \times \dd s)$ is $\sigma$-finite. Consequently, $\Psi$ can be seen as an independently marked point process with Poisson ground process on $(0,1]$ with mean measure $A_0$ and marks on $\X$ with mark probability kernel $\kappa$. 
\end{enumerate}
If the mean measure $\nu$ is finite, points (i) and (ii) always hold with additional trivial simplifications.
\end{proposition}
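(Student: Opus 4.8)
The strategy is to reduce everything to one quantitative fact. By \Cref{prop:nec_suff_poisson}, under a Poisson prior the hypothesis $Z_i(\X)<\infty$ a.s.\ is equivalent to $\lambda:=\int_{\X\times(0,1]}s\,\nu(\dd x\,\dd s)<\infty$ (and in fact $\lambda=\E[Z_i(\X)]$, since $\E[Z_i(\X)]=\E[\sum_j S_j]=\int_{\X\times(0,1]}s\,M_\Psi(\dd x\,\dd s)$ by Campbell's averaging formula). The crucial point is that $\lambda<\infty$ controls the mass of $\nu$ \emph{uniformly in the $\X$-coordinate}: for every $m\ge 1$,
\[
\nu\big(\X\times[1/m,1]\big)\;\le\; m\int_{\X\times[1/m,1]}s\,\nu(\dd x\,\dd s)\;\le\; m\lambda\;<\;\infty .
\]
Since $(0,1]=\bigcup_{m\ge1}[1/m,1]$, this already exhibits $\nu$ as $\sigma$-finite --- no $\sigma$-compactness of the Polish space $\X$ is needed, because $\nu$-mass can only accumulate as $s\downarrow 0$, and that is exactly what $\lambda<\infty$ forbids. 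The same display shows that $A_0(\dd s):=\nu(\X\times\dd s)$ is locally finite, hence $\sigma$-finite, on $(0,1]$.

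For part (i), recall from Example~\ref{example_mixed_poisson} that a Poisson process satisfies $M_\Psi^{(k)}=\nu^k$. A finite product of $\sigma$-finite measures is $\sigma$-finite, so $\nu^k$ --- and therefore $M_\Psi^{(k)}$ --- is $\sigma$-finite for every $k\ge1$.

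For part (ii), since $\X$ is a Borel (Polish) space and both $\nu$ and its $s$-marginal $A_0$ are $\sigma$-finite, the measure disintegration theorem \cite[Theorem~3.4]{Kallenberg21} --- the same tool invoked for \eqref{eq:mpsi_disinteg} --- provides a kernel $\kappa$ from $(0,1]$ to $\X$ with $\nu(\dd x\,\dd s)=\kappa(\dd x\mid s)\,A_0(\dd s)$. Because $A_0$ is precisely the $s$-marginal of $\nu$, the identity $A_0(B)=\nu(\X\times B)=\int_B\kappa(\X\mid s)\,A_0(\dd s)$ forces $\kappa(\X\mid s)=1$ for $A_0$-a.e.\ $s$; modifying $\kappa$ on an $A_0$-null set, it becomes a genuine probability kernel. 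The final assertion is then the marking (colouring) theorem for Poisson processes \citep[see, e.g.,][]{BaBlaKa}: a $\mathrm{PP}(\nu)$ with $\nu=\kappa(\dd x\mid s)A_0(\dd s)$ has the law of the independently marked process obtained from the Poisson ground process on $(0,1]$ with intensity $A_0$ by attaching to each point $S_j$ an independent $\X$-valued mark with law $\kappa(\cdot\mid S_j)$; this applies since $A_0$ is $\sigma$-finite (indeed locally finite on $(0,1]$) and $\kappa$ is probability-valued. When $\nu$ is finite all of the above holds a fortiori: $Z_i(\X)<\infty$ a.s.\ is automatic, $\nu^k$ is finite hence $\sigma$-finite, and one may additionally normalise $A_0$ to a probability measure, the total mass $\nu(\X\times(0,1])$ being absorbed as a scalar.

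I do not expect a substantive obstacle: the proof is an assembly of standard point-process facts (Campbell's formula, \Cref{prop:nec_suff_poisson}, existence of disintegrating kernels, the Poisson marking theorem). The two places that merit care are (a) deducing $\sigma$-finiteness of $\nu$ on a possibly non-$\sigma$-compact $\X$, which is why I would foreground the uniform-in-$x$ estimate above, and (b) verifying that the disintegrating kernel can be taken probability-valued rather than a generic kernel.
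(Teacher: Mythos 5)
Your proposal is correct and follows essentially the same route as the paper's proof: the key estimate $\nu(\X\times(\epsilon,1])\le \epsilon^{-1}\int_{\X\times(\epsilon,1]}s\,\nu(\dd x\,\dd s)<\infty$ (your $[1/m,1]$ covering is the same bound), the reduction via \Cref{prop:nec_suff_poisson}, and the disintegration theorem \cite[Theorem~3.4]{Kallenberg21} for part (ii) all match. Your extra care in checking that the disintegrating kernel is probability-valued and your explicit appeal to the marking theorem fill in details the paper leaves implicit, but they do not change the argument.
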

\begin{proof}
To prove point (i), we observe that $\nu(\X \times (\epsilon,1]) < \infty$, for any $\epsilon>0$. Indeed, when $\Psi$ is a Poisson process with mean measure $\nu$, the condition $Z_i(\X) < \infty$ a.s. is equivalent to $\int_{\X\times(0,1]} s \nu(\dd x \, \dd s) < \infty$ (\Cref{prop:nec_suff_poisson}). Therefore, for any $\epsilon>0$,
\[
\nu(\X \times (\epsilon,1]) = \int_{\X\times(\epsilon,1]} \nu(\dd x \, \dd s) \leq \epsilon^{-1} \int_{\X\times(\epsilon,1]} s \nu(\dd x \, \dd s) < \infty.
\]
Thus, it follows that $\nu$ is $\sigma$-finite, as well as any $\nu^k$. 

We now focus on point (ii). Consider the projected measure $\nu(\X \times \dd s) =: A_0(\dd s)$ on $(0,1]$. From point (i), $A_0$ is a $\sigma$-finite measure. Then, thanks to \cite[Theorem 3.4]{Kallenberg21}, the following disintegration holds: $\nu(\dd x \, \dd s) = \kappa(\dd x \mid s) A_0(\dd s)$, where $\kappa$ is a probability kernel from $(0,1]$ to $\X$.    
\end{proof}

\section{Auxiliary results on mixed binomial processes}\label{app:mb_results}
This section presents three distributional results related to mixed binomial processes. The first proposition establishes that the $k$-th reduced Palm version of any mixed binomial process remains a mixed binomial process. This result is crucial for proving \Cref{lem:mbp} (see Section \ref{app:pred_characterization}) and \Cref{thm:mixed_binom_bayesian} (see Section \ref{app:examples_computation}).

\begin{proposition}[Palm distribution of mixed binomial processes]\label{prop:palm_mb}
Let $\Phi$ be a mixed binomial process $\mathrm{MB}(\nu, q_M)$ defined on $\X$. Then, the reduced Palm version of $\Phi$ at $x$, denoted with $\Phi^!_x$, is a mixed binomial  process $\mathrm{MB}(\nu, q_{\tilde{M}})$ where
\[
    q_{\tilde{M}}(m) = \frac{(m+1)}{\E(M)} q_M(m+1).
\]
Similarly, the $k$-th reduced Palm version of $\Phi$ at $\bm x = (x_1, \ldots, x_k)$ (distinct points), denoted with $\Phi^!_{\bm x}$, is a mixed binomial process $\mathrm{MB}(\nu, q^{(k)}_{\tilde{M}})$ where
\[
     q^{(k)}_{\tilde{M}}(m) = \frac{ (m+k)!}{\E\{M^{(k)}\} m!} q_M (m+k).
\]
\end{proposition}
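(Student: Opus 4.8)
The plan is to identify $\Phi^!_{\bm x}$ via the Campbell--Little--Mecke (\textsc{clm}) formula recalled in Section~\ref{app:general_res_palm}, exploiting the explicit representation $\Phi = \sum_{j=1}^M \delta_{X_j}$ with $(X_j)_{j\geq 1}\iid \nu$ independent of $M \sim q_M$. From Example~\ref{ex:mixed_bin_general} we have $M_\Phi^{(k)} = \E[M^{(k)}]\,\nu^k$, so the $k$-th reduced Palm kernel $\{\plaw^{\bm x}_{\Phi^!}\}_{\bm x\in\X^k}$ is the $\nu^k$-a.e.\ unique kernel for which
\[
\E\!\left[\int_{\X^k} f\Big(\bm x,\, \Phi - \textstyle\sum_{i=1}^k \delta_{x_i}\Big)\, \Phi^{(k)}(\dd \bm x)\right] \;=\; \E[M^{(k)}] \int_{\X^k} \E\!\left[ f(\bm x, \Phi^!_{\bm x}) \right] \nu^k(\dd \bm x)
\]
holds for all nonnegative measurable $f$ on $\X^k \times \mathbb M_\X$. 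It is enough to check this identity, with the candidate kernel being the law of $\mathrm{MB}(\nu, q^{(k)}_{\tilde M})$, for test functions of the form $f(\bm x, \varphi) = g(\bm x)\exp\{-\int_\X h(y)\,\varphi(\dd y)\}$ with $g,h \geq 0$ measurable, since these determine the law of a point process through its Laplace functional (and a countable determining family of $h$'s yields a single $\nu^k$-null exceptional set).

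Next I would evaluate the left-hand side by conditioning on $M = m$. Since $\Phi^{(k)} = \sum^{\neq}_{(j_1,\ldots,j_k)} \delta_{(X_{j_1},\ldots,X_{j_k})}$, the integral becomes a sum over the $m^{(k)} = m(m-1)\cdots(m-k+1)$ ordered $k$-tuples of pairwise distinct indices (vanishing when $m < k$), and by exchangeability of $(X_j)_{j\geq 1}$ each tuple contributes identically. Fixing the tuple $(1,\ldots,k)$, the retained points $(X_1,\ldots,X_k)$ are i.i.d.\ $\nu$ and the removed-atom configuration $\sum_{j=k+1}^m \delta_{X_j}$ is, conditionally on $M=m$, a binomial process $\mathrm{BP}_{m-k}(\nu)$ independent of $(X_1,\ldots,X_k)$. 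Summing against $q_M$ over $m \geq k$ and reindexing $m = \ell + k$ gives
\[
\E\!\left[\int_{\X^k} f\Big(\bm x,\, \Phi - \textstyle\sum_{i=1}^k \delta_{x_i}\Big)\, \Phi^{(k)}(\dd \bm x)\right] = \sum_{\ell \geq 0} q_M(\ell+k)\,\frac{(\ell+k)!}{\ell!} \int_{\X^k} \E\!\left[ f\big(\bm x, \mathrm{BP}_\ell(\nu)\big)\right] \nu^k(\dd \bm x).
\]

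Comparing with the right-hand side of the \textsc{clm} identity and cancelling the common factor $\E[M^{(k)}]\,\nu^k(\dd\bm x)$ shows that $\plaw^{\bm x}_{\Phi^!}$ is the law of the mixed binomial process obtained by running $\mathrm{BP}_\ell(\nu)$ with $\ell$ drawn from $q^{(k)}_{\tilde M}(\ell) := \frac{(\ell+k)!}{\E[M^{(k)}]\,\ell!}\,q_M(\ell+k)$, i.e.\ $\Phi^!_{\bm x} \dequal \mathrm{MB}(\nu, q^{(k)}_{\tilde M})$; the identity $\sum_{\ell\geq 0}\frac{(\ell+k)!}{\ell!}q_M(\ell+k) = \sum_{m\geq k} m^{(k)}\,q_M(m) = \E[M^{(k)}]$ confirms that $q^{(k)}_{\tilde M}$ is a probability mass function, as required for the candidate to be a genuine point process. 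Specializing to $k=1$ (so $m^{(1)}=m$, $\E[M^{(1)}]=\E[M]$) recovers the first displayed formula.

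The argument is essentially a bookkeeping computation, so the only points demanding care --- rather than any real difficulty --- are the measure-theoretic ones: justifying that product--exponential test functions suffice to pin down the Palm kernel $\nu^k$-a.e., and correctly tracking the factorial powers (in particular the vanishing of the $m<k$ terms) through the reindexing. An alternative, marginally slicker route I might note is to regard $\Phi$ as an (independently marked) mixed binomial process and invoke a known Palm formula for the mixing law; but the direct \textsc{clm} computation above is self-contained.
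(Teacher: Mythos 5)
Your proof is correct, but it follows a genuinely different route from the paper's. The paper first treats $k=1$ by differentiating the Laplace functional $\mathcal L_\Phi(f)=\E[\E\{e^{-f(X)}\}^M]$ and invoking the identity $\evalat{\frac{\partial}{\partial t}\mathcal L_\Phi(f+tg)}{t=0}=-\int_\X g(x)\,\mathcal L_{\Phi_x}(f)\,M_\Phi(\dd x)$ to read off the Palm version, and then obtains the order-$k$ statement by induction via the Palm algebra $\Phi^!_{(x_1,x_2)}\dequal(\Phi^!_{x_1})^!_{x_2}$. You instead verify the defining \textsc{clm} disintegration directly for every $k$ at once, conditioning on $M=m$, using exchangeability of the i.i.d.\ atoms to reduce the sum over the $m^{(k)}$ ordered distinct tuples to a single representative, and reindexing $m=\ell+k$; your computation matches term by term with the candidate $\mathrm{MB}(\nu,q^{(k)}_{\tilde M})$, and your check that $q^{(k)}_{\tilde M}$ sums to one is the right sanity check. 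Your approach buys a one-shot, self-contained argument that makes the combinatorial factor $(m+k)!/m!$ completely transparent and avoids both the differentiation step and the induction; the paper's approach buys brevity by outsourcing the analytic work to cited results in \cite{BaBlaKa} and reuses the Palm-algebra machinery that appears elsewhere in the supplement. The only care points you flag --- that product--exponential test functions with a countable determining family of $h$'s pin down the kernel $\nu^k$-a.e., and that the $m<k$ terms vanish because $m^{(k)}=0$ --- are handled correctly, so no gap remains.
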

\begin{proof}
    Let us focus on the reduced Palm distribution of order $k=1$.
    Let $\mathcal L_\Phi$ be the Laplace functional of $\Phi$.
    By \cite[Proposition 3.2.1]{BaBlaKa}, for any measurable functions $f, g: \X \to \R_+$, the Palm distribution of a point process satisfies
    \begin{equation*}
        \evalat{\frac{\partial}{\partial t} \mathcal{L}_{\Phi}(f + tg)}{t=0} = - \int_{\X} g(x) \mathcal{L}_{\Phi_x}(f) M_{\Phi}(\dd x).
    \end{equation*}
    Since $\Phi$ is a mixed binomial process $\mathrm{MB}(\nu, q_M)$, its Laplace functional writes as    \begin{equation}\label{eq:lap_mbp}
        \mathcal{L}_{\Phi}(f) = \E\left[ \E\left\{e^{- f(X)}\right\}^M \right],
    \end{equation}
    where $M$ has probability mass function $q_M$ and $X$ has law $\nu$, so that
    \[
      \evalat{\frac{\partial}{\partial t} \mathcal{L}_{\Phi}(f + tg)}{t=0} = - \int_{\X} g(x) e^{-f(x)} \left[ \sum_{m \geq 1} m q_M(m)  \E\left\{e^{-f(X)}\right\}^{m-1}\right]  \nu(\dd x).
    \]
    Multiplying and dividing by $\E(M)$, and thanks to a change of index $k = m-1$ in the inner summation, we recognize the following expression
    \[
      \evalat{\frac{\partial}{\partial t} \mathcal{L}_{\Phi}(f + tg)}{t=0} = - \int_{\X} g(x) e^{-f(x)} \left[ \sum_{k \geq 0} \frac{(k+1) q_M(k+1) }{\E(M)} \E\left\{e^{-f(X)}\right\}^{k}\right]  M_{\Phi}(\dd x).
    \]
    Hence, $\Phi_x = \delta_{x} + \Phi^!_{x}$ where $\Phi^!_{x}$ is distributed as in the statement.

    The proof for the Palm distribution of order $k$ follows by induction, by using the Palm algebra property \citep[Proposition 3.3.9]{BaBlaKa}, i.e., $\Phi^!_{(x_1, x_2)} \dequal \left(\Phi^!_{x_1}\right)^!_{x_2}$, for $M_{\Phi^{(2)}}$-a.a. $(x_1,x_2)$.
\end{proof}

The second proposition examines the effect of thinning a mixed binomial process and establishes that its probability law remains that of a mixed binomial process. This result is applied in the proof of \Cref{thm:mixed_binom_bayesian}; see the mixed binomial case of Section \ref{app:examples_computation}.

\begin{proposition}[Thinning of mixed binomial processes] \label{prop:mbp_thin}
Let $\Phi$ be a mixed binomial process $\mathrm{MB}(\nu, q_M)$ defined on $\X$. Let the retention probability $p: \X \rightarrow [0, 1]$ be a measurable function. Then the thinning of $\Phi$ by $p$, denoted by $\Phi_p$, is a mixed binomial process $\mathrm{MB}(\nu_p, q_{M_p})$, where $\nu_p(\dd x) = p(x) \nu(\dd x) / c_p$ is a probability distribution and
\[
    q_{M_p}(m) = \sum_{z \geq m} \binom{z}{m} q_M(z) c_p^{m} (1 - c_p)^{z-m}.
\]
\end{proposition}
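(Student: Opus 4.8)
The plan is to compute the Laplace functional of the thinned process $\Phi_p$ directly and recognize it as that of a mixed binomial process, exploiting the representation \eqref{eq:lap_mbp} of the Laplace functional of a mixed binomial process. Recall that $\Phi = \sum_{j=1}^M \delta_{X_j}$ with $(X_j)_{j\geq1}$ i.i.d.\ $\nu$ and $M\sim q_M$ independent; the thinning $\Phi_p$ retains each atom $X_j$ independently with probability $p(X_j)$. First I would condition on $(M,(X_j)_j)$ and introduce i.i.d.\ retention indicators $R_j\sim\mathrm{Be}(p(X_j))$, so that for a measurable $f:\X\to\R_+$,
\[
    \mathcal L_{\Phi_p}(f) = \E\Bigl[\prod_{j=1}^M \E\bigl\{e^{-f(X_j)R_j}\mid X_j\bigr\}\Bigr] = \E\Bigl[\prod_{j=1}^M \bigl\{1-p(X_j)+p(X_j)e^{-f(X_j)}\bigr\}\Bigr].
\]
Since the $X_j$ are i.i.d.\ and independent of $M$, this equals $\E\bigl[h(f)^M\bigr]$ with $h(f):=\int_\X\{1-p(x)+p(x)e^{-f(x)}\}\,\nu(\dd x) = 1-c_p+\int_\X p(x)e^{-f(x)}\nu(\dd x)$, where $c_p=\int_\X p(x)\nu(\dd x)$; here I use that $\nu$ is a probability measure so $\int(1-p)\,\dd\nu = 1-c_p$.

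Next I would rewrite $h(f)$ in the form $c_p\, q(f) + (1-c_p)$, where $q(f):=\int_\X e^{-f(x)}\nu_p(\dd x)$ with $\nu_p(\dd x)=p(x)\nu(\dd x)/c_p$ a probability measure (assuming $c_p>0$; the degenerate case $c_p=0$ gives $\Phi_p\equiv 0$ trivially). Then I would expand, for any integer $z\geq 0$, via the binomial theorem,
\[
    \bigl\{c_p q(f) + (1-c_p)\bigr\}^z = \sum_{m=0}^{z}\binom{z}{m} c_p^m (1-c_p)^{z-m}\, q(f)^m,
\]
so that, interchanging the (finite, or dominated) sums,
\[
    \mathcal L_{\Phi_p}(f) = \sum_{z\geq0} q_M(z)\sum_{m=0}^{z}\binom{z}{m}c_p^m(1-c_p)^{z-m} q(f)^m = \sum_{m\geq0} q_{M_p}(m)\, q(f)^m,
\]
with $q_{M_p}(m)=\sum_{z\geq m}\binom{z}{m}q_M(z)c_p^m(1-c_p)^{z-m}$ exactly as in the statement. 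The final step is to verify that $q_{M_p}$ is a genuine probability mass function --- this is immediate since summing $q_{M_p}(m)$ over $m$ reverses the binomial expansion and returns $\sum_z q_M(z)=1$ --- and then to invoke \eqref{eq:lap_mbp}: the expression $\sum_{m\geq0}q_{M_p}(m)q(f)^m = \E[q(f)^{M_p}]$ with $M_p\sim q_{M_p}$ and $q(f)=\E\{e^{-f(X)}\}$ for $X\sim\nu_p$ is precisely the Laplace functional of $\mathrm{MB}(\nu_p,q_{M_p})$. Since the Laplace functional characterizes the distribution of a point process, the claim follows.

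I do not anticipate a serious obstacle here; the argument is essentially a generating-function manipulation. The only points requiring mild care are the justification of the interchange of summation (trivial when $M$ is bounded, and otherwise by nonnegativity/Tonelli since all terms are nonnegative), the handling of the edge case $c_p=0$, and --- if one wants $q_{M_p}$ stated cleanly --- noting that the factor $c_p^m$ could alternatively be absorbed so that $\nu_p$ carries the mass; I would keep the normalization as in the statement so that $\nu_p$ is a probability measure and $q_{M_p}$ sums to one. A one-line remark would also record the abuse-of-notation convention from Example~\ref{ex:mixed_bin_general} that allows $\nu$ (hence $\nu_p$) to be a generic finite measure, interpreted after normalization.
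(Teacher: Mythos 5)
Your proof is correct and follows essentially the same route as the paper: both compute the Laplace functional of the thinned process, rewrite the inner integral as $c_p q(f) + (1-c_p)$ with $q(f) = \int e^{-f}\,\dd\nu_p$, expand via the binomial theorem, and swap sums by Fubini/Tonelli to read off $q_{M_p}$. The only cosmetic difference is that you derive the thinning formula for the Laplace functional from first principles via retention indicators, whereas the paper cites it directly from the point-process literature.
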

\begin{proof}
Given a point process $\Phi$, the thinned process $\Phi_p$ is characterized by the Laplace functional described in  \cite[Proposition 2.2.6]{BaBlaKa}, that is
    \[
        \mathcal L_{\Phi_p}(f) = \mathcal{L}_{\Phi}\left[ -\log\left\{p(x) e^{-f(x)} + 1 - p(x)\right\} \right].
    \]
    Under the assumption that $\Phi$ is a mixed binomial process, its 
    Laplace functional is expressed in \eqref{eq:lap_mbp}. Consequently,
    \[
        \mathcal L_{\Phi_p}(f) = \E\left\{\left( 1 + \E\left[ p(X)\left\{e^{- f(X)} - 1\right\}\right] \right)^M \right\},
    \]
    where $M$ has probability mass function $q_M$ and $X$ has law $\nu$.
    Let $Z$ be distributed according to $\nu_p$, where $\nu_p$ is as in the statement. It follows that 
    \[
        \E\left[ p(X)\left\{e^{- f(X)} - 1\right\}\right] = c_p \E\left\{e^{-f(Z)} \right\} - c_p.
    \]
    An application of the binomial theorem leads to
    \[
        \mathcal L_{\Phi_p}(f) = \E\left[ \sum_{j=0}^M \binom{M}{j} {c_p}^j \E\left\{e^{-f(Z)}\right\}^j (1 - c_p)^{M - j} \right]
    \]
    and the result follows from Fubini's theorem.
\end{proof}

Finally, we highlight a simple yet useful result stating that mixed Poisson processes $\mathrm{MP}(\nu, f_\gamma)$, where $\nu$ is finite, can be viewed as mixed binomial processes. 

\begin{lemma}\label{lemma:MP_as_MB}
Let $\Phi \sim \mathrm{MP}(\nu, f_\gamma)$, where $\nu$ is a finite measure on $\X$ and some $f_\gamma$. Then $\Phi \sim \mathrm{MB}(\nu, q_M)$ for some $q_M$. In particular, letting $\varrho(t) := \E(e^{-t\gamma})$ and $\varphi(\nu(\X)(1 - z)) := \E(z^M)$, it holds that $\varrho = \varphi$ on $[0, \nu(\X)]$.
\end{lemma}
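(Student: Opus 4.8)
The plan is to exploit the elementary fact that a Poisson process with a \emph{finite} intensity measure is nothing but an i.i.d.\ sample of random (Poisson) size, and then to integrate out the mixing variable $\gamma$. Concretely, conditionally on $\gamma$ the process $\Phi$ is $\mathrm{PP}(\gamma\nu)$ with $\gamma\nu$ finite (since $\nu$ is finite), so $\Phi\mid\gamma$ can be realized as $\sum_{j=1}^{N}\delta_{X_j}$, where $N\mid\gamma\sim\mathrm{Poi}(\gamma\nu(\X))$ and, given $N$, the $X_j$'s are i.i.d.\ with law $\nu(\,\cdot\,)/\nu(\X)$, independent of $N$ and of $\gamma$. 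Averaging over $\gamma$ leaves the conditional law of the locations unchanged, so $\Phi\dequal\sum_{j=1}^{M}\delta_{X_j}$ with $(X_j)_{j\ge1}$ i.i.d.\ from $\nu/\nu(\X)$, independent of $M$, where $M$ is the mixed Poisson variable determined by $M\mid\gamma\sim\mathrm{Poi}(\gamma\nu(\X))$ and $\gamma\sim f_\gamma$. With the abuse of notation of \Cref{ex:mixed_bin_general} (reading $\mathrm{MB}(\nu,q_M)$ after normalizing $\nu$), this says precisely $\Phi\sim\mathrm{MB}(\nu,q_M)$ with $q_M$ the probability mass function of $M$.

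To pin down the identity $\varrho=\varphi$, I would compute the probability generating function of $M$ by conditioning and using the Poisson pgf: for $z\in[0,1]$,
\[
\E(z^{M})=\E\bigl[\E(z^{M}\mid\gamma)\bigr]=\E\bigl[e^{-\gamma\nu(\X)(1-z)}\bigr]=\varrho\bigl(\nu(\X)(1-z)\bigr).
\]
Since $\varphi$ is defined by $\varphi(\nu(\X)(1-z)):=\E(z^{M})$, this gives $\varphi(\nu(\X)(1-z))=\varrho(\nu(\X)(1-z))$ for every $z\in[0,1]$; as $z\mapsto\nu(\X)(1-z)$ maps $[0,1]$ onto $[0,\nu(\X)]$, this is exactly $\varrho=\varphi$ on $[0,\nu(\X)]$. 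An equivalent and fully parallel route is to match Laplace functionals directly: on the one hand $\mathcal L_\Phi(f)=\E[\exp\{-\gamma\int_\X(1-e^{-f(x)})\,\nu(\dd x)\}]=\varrho(\int_\X(1-e^{-f(x)})\,\nu(\dd x))$, and on the other hand, for a mixed binomial process, $\mathcal L_\Phi(f)=\E[(\E[e^{-f(X)}])^{M}]=\varphi(\nu(\X)(1-\E[e^{-f(X)}]))$ with $X\sim\nu/\nu(\X)$; choosing $f$ so that $\E[e^{-f(X)}]$ sweeps $[0,1]$ identifies $q_M$ and yields $\varrho=\varphi$ at the same time.

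I do not expect a substantive obstacle here. The only points requiring a little care are the bookkeeping around the normalization convention for $\mathrm{MB}(\nu,q_M)$ when $\nu$ is not a probability measure, and invoking cleanly the standard ``finite Poisson process $=$ i.i.d.\ sample of Poisson size'' representation (which can itself be checked at the level of Laplace functionals, so the argument need not rely on any explicit measure-theoretic realization).
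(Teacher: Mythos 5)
Your proposal is correct and follows essentially the same route as the paper: the paper likewise constructs the candidate mixed binomial process with $M\mid\gamma\sim\mathrm{Poi}(\gamma\nu(\X))$ and i.i.d.\ locations from $\nu/\nu(\X)$, matches Laplace functionals to conclude $\Phi\dequal\tilde\Phi$, and reads off $\varrho=\varphi$ from the probability generating function of $M$ exactly as you do. Your remark about the normalization convention for $\mathrm{MB}(\nu,q_M)$ is the only bookkeeping point, and it is handled the same way in the paper.
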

\begin{proof}
Assume that $\Phi \sim \mathrm{MP}(\nu, f_\gamma)$. Define a mixed binomial process 
\[
\tilde{\Phi}\mid M = \sum_{j=1}^M \delta_{\tilde{X}_j},
\]
with $M\mid \gamma \sim \mathrm{Poi}(\gamma \nu(\X))$ and $\gamma \sim f_\gamma$. Moreover, consider $\tilde{X}_1,\ldots,\tilde{X}_M\mid M \iid \nu(\cdot)/\nu(\X)$. The probability-generating function of $M$ can be written as 
\[
\mathcal{G}_M(z) = \E(z^M) = \E[\exp\{ -\gamma \nu(\X)(1-z) \}]
\]
and the Laplace transform of $f(\tilde{X}_1)$ is given by
\begin{equation*}\label{eq:laplace_fX_1}
    \mathcal{L}_{f(\tilde{X}_1)}(t) = \int_{\X} \exp\{ -t f(x) \} \nu(\dd x)/\nu(\X).
\end{equation*}
Consequently, the Laplace functional of $\tilde{\Phi}$ equals 
\begin{equation*}
        \mathcal{L}_{\tilde{\Phi}}(f) = \mathcal{G}_M\left(\mathcal{L}_{f(\tilde{X}_1)}(1)\right) 
        = \E\left[ \exp\left\{ - \int_{\X} (1- e^{-f(x)}) \gamma \nu(\dd x) \right\} \right],
\end{equation*}
therefore $\tilde{\Phi} \sim \mathrm{MP}(\nu, f_\gamma)$ and in particular $\Phi \dequal \tilde{\Phi}$. By definition, $\Phi \dequal \tilde{\Phi} \sim \mathrm{MB}(\nu, q_M)$, with $\varphi(\nu(\X)(1-z)) := \E(z^M) = \E[\exp\{ -\gamma \nu(\X)(1-z) \}] $, thus $\varphi(t) = \E(e^{-t\gamma}) =: \varrho(t)$ for $t \in [0,\nu(\X)]$.  
\end{proof}

\section{Key results from point process theory and Palm distributions}\label{app:general_res_palm}

First, we recall the primary technical tool used in the proof of Theorems \ref{thm:marg} and \ref{thm:post}, namely the Campbell-Little-Mecke formula. This formula can be viewed as an extension of Fubini's theorem to the case when both expectation and the integration are taken with respect to a point process.

\begin{lemma}[Campbell-Little-Mecke (\textsc{clm}) formula]\label{lem:clm}
    Let $\Phi$ be a point process over $(\X, \Xcr)$ such that $M_{\Phi}^{(k)}$ is $\sigma$-finite. For all measurable $f: \X^k \times \mathbb M_\X \rightarrow \R_+$, it holds
    \[
        \E\left\{\int_{\X^k} f \left(\bm x, \Phi -  \sum_{j=1}^k \delta_{x_j}\right) \Phi^{(k)}(\dd \bm x) \right\} =  \int_{\X^k} \E \left\{f(\bm x, \Phi^{!}_{\bm x})\right\} M_{\Phi}^{(k)}(\dd \bm x).
    \]
\end{lemma}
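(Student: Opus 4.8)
The plan is to deduce the formula directly from the definition of the $k$-th order Palm kernel as a disintegration of the $k$-th order Campbell measure: first upgrade the defining ``rectangle'' identity to arbitrary nonnegative integrands by a routine functional monotone-class argument, and then perform a change of variables that replaces $\Phi$ by the reduced configuration $\Phi-\sum_{j=1}^k\delta_{x_j}$.

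First I would recall that $\plaw_{\Phi}^{\bm x}$ is, by construction, the ($M_\Phi^{(k)}$-a.e.\ unique) probability kernel disintegrating the $k$-th order Campbell measure $\mathscr C_\Phi^{(k)}(C\times L)=\E[\int_{\X^k}\indicator_C(\bm x)\indicator_L(\Phi)\,\Phi^{(k)}(\dd\bm x)]$ with respect to $M_\Phi^{(k)}$; such a kernel exists precisely because $M_\Phi^{(k)}$ is $\sigma$-finite. I would then promote the defining identity
\[
\E\Big[\int_{\X^k}\indicator_C(\bm x)\,\indicator_L(\Phi)\,\Phi^{(k)}(\dd\bm x)\Big]=\int_{C}\plaw_{\Phi}^{\bm x}(L)\,M_\Phi^{(k)}(\dd\bm x)
\]
to the statement that, for every nonnegative measurable $h\colon\X^k\times\mathbb M_\X\to\R_+$,
\[
\E\Big[\int_{\X^k}h(\bm x,\Phi)\,\Phi^{(k)}(\dd\bm x)\Big]=\int_{\X^k}\int_{\mathbb M_\X}h(\bm x,\nu)\,\plaw_{\Phi}^{\bm x}(\dd\nu)\,M_\Phi^{(k)}(\dd\bm x).
\]
Indeed, for $h=\indicator_A$ both sides are $\sigma$-finite measures on $\X^k\times\mathbb M_\X$ (exhaust $\X^k$ by sets of finite $M_\Phi^{(k)}$-measure) that agree on the $\pi$-system of measurable rectangles, hence they coincide on the product $\sigma$-algebra by Dynkin's lemma; linearity and monotone convergence then extend this to all nonnegative measurable $h$.

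Next I would apply this identity to $h(\bm x,\nu)=f\big(\bm x,\,\nu-\sum_{j=1}^k\delta_{x_j}\big)$. The map $(\bm x,\nu)\mapsto\nu-\sum_{j}\delta_{x_j}$ is jointly measurable, and it is evaluated only on full-measure sets: $\Phi^{(k)}(\dd\bm x)$ charges only tuples $\bm x$ for which $\Phi-\sum_j\delta_{x_j}$ is again a nonnegative counting measure, while for $M_\Phi^{(k)}$-a.e.\ $\bm x$ the kernel $\plaw_{\Phi}^{\bm x}$ is carried by configurations having $x_1,\dots,x_k$ among their atoms, so $\nu-\sum_j\delta_{x_j}\in\mathbb M_\X$ holds $\plaw_{\Phi}^{\bm x}$-a.s., and its law under $\plaw_{\Phi}^{\bm x}$ is, by the very definition of the reduced Palm version, the law of $\Phi^!_{\bm x}$. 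Substituting, the inner integral on the right-hand side becomes $\E[f(\bm x,\Phi^!_{\bm x})]$, while the left-hand side becomes $\E[\int_{\X^k}f(\bm x,\Phi-\sum_j\delta_{x_j})\,\Phi^{(k)}(\dd\bm x)]$, which is exactly the claimed identity.

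The only point requiring genuine care — and the step I would treat most carefully — is the null-set bookkeeping in the change of variables: one must check that the subtraction map is defined $\plaw_{\Phi}^{\bm x}$-a.s.\ for $M_\Phi^{(k)}$-a.e.\ $\bm x$ (this is where the characterizing property of the Palm kernel, namely that the conditioning points are atoms of $\Phi_{\bm x}$, is used) and, symmetrically, that $\Phi-\sum_j\delta_{x_j}$ lies in $\mathbb M_\X$ for $\Phi^{(k)}$-almost every $\bm x$, so that $f$ is only ever evaluated at legitimate arguments. Everything else is the standard functional monotone-class extension, with $\sigma$-finiteness of $M_\Phi^{(k)}$ entering only to guarantee existence of the disintegrating kernel and to license the uniqueness-of-measures step.
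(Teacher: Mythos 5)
Your proof is correct, but there is nothing in the paper to compare it against: the authors state the Campbell--Little--Mecke formula without proof, recalling it as a standard result from the point-process literature (it is the primary technical tool imported from Baccelli--B{\l}aszczyszyn--Karray and used in the proofs of Theorems 1 and 2). What you have written is essentially the textbook derivation: the defining disintegration of the $k$-th order Campbell measure against $M_\Phi^{(k)}$ gives the rectangle identity, a Dynkin/monotone-class argument (using $\sigma$-finiteness of $M_\Phi^{(k)}$ to exhaust $\X^k\times\mathbb M_\X$ by sets of finite measure) upgrades it to arbitrary nonnegative integrands, and the substitution $h(\bm x,\nu)=f(\bm x,\nu-\sum_j\delta_{x_j})$ converts the Palm kernel into the reduced Palm kernel. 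Your null-set bookkeeping is the right place to focus attention, and the one fact you invoke without verifying --- that $\plaw_{\Phi}^{\bm x}$ is carried by configurations having $x_1,\dots,x_k$ among their atoms for $M_\Phi^{(k)}$-a.e.\ $\bm x$ --- is itself a one-line consequence of the same disintegration: apply the rectangle identity to $L=\{\nu:\nu(\{x_j\})=0\text{ for some }j\}$ (suitably made into a measurable subset of $\X^k\times\mathbb M_\X$) and note that the left-hand side vanishes because $\Phi^{(k)}$ charges only tuples of distinct atoms of $\Phi$. Spelling that line out would make the argument fully self-contained; as it stands, the proof is sound and supplies a derivation the paper deliberately omits.
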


Next, we present a key characterization result \cite[Theorem 5.3]{Kal(73)} for mixed Poisson and mixed binomial processes, which relies on properties of the reduced Palm distributions. This result will be crucial for the proof of \Cref{lem:mbp}. However, it is important to note that in Kallenberg’s theorem, a point process is defined as a random boundedly finite counting measure, whereas we consider a point process as a random locally finite counting measure.
This distinction, while subtle, is crucial. Working with boundedly finite measures excludes Poisson processes with infinite activity. Specifically, in infinite-activity Poisson processes considered in the literature, $\Psi(B \times (0, r]) = \infty$ for any bounded Borel set $B \subset \X$, meaning that  $\Psi$ is not a boundedly finite measure.
Therefore, we verify that the next result remains valid under our definition of a point process.

\begin{lemma}[Theorem 5.3 in \cite{Kal(73)}]\label{lem:kallenberg}
Let $\Phi$ be a point process with locally finite mean measure $M_\Phi$. Then the following assertions are equivalent. 
\begin{enumerate}
        \item[(i)] The distribution of $\Phi^!_x$ is independent of $x$, for $M_\Phi$-a.a. $x$.
        \item[(ii)] $\Phi$ is either a mixed Poisson or mixed binomial process.
    \end{enumerate}
\end{lemma}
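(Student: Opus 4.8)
The plan is to establish the two implications separately: $(ii)\Rightarrow(i)$ by computing reduced Palm distributions, and $(i)\Rightarrow(ii)$ by reducing to the boundedly finite setting, where \Cref{lem:kallenberg} applies verbatim.

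For $(ii)\Rightarrow(i)$: when $\Phi$ is a Poisson process, the Mecke (Slivnyak) equation gives $\Phi^!_x\dequal\Phi$ for $M_\Phi$-a.a.\ $x$, which does not depend on $x$. When $\Phi\sim\mathrm{MP}(\nu,f_\gamma)$, I would condition on $\gamma$, use the Poisson case, and disintegrate the Campbell measure $\mathscr C_\Phi$ with respect to $M_\Phi=\E[\gamma]\,\nu$ via the \textsc{clm} formula (\Cref{lem:clm}); this yields $\Phi^!_x\dequal\mathrm{MP}(\nu,\hat f_\gamma)$, with $\hat f_\gamma(\dd\gamma)\propto\gamma\,f_\gamma(\dd\gamma)$ the size-biased mixing law, again independent of $x$. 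When $\Phi\sim\mathrm{MB}(\nu,q_M)$, \Cref{prop:palm_mb} already gives $\Phi^!_x\sim\mathrm{MB}(\nu,q_{\tilde M})$ with $q_{\tilde M}$ independent of $x$. All three computations require only that the relevant first moment ($\E[\gamma]$, resp.\ $\E[M]$) be finite, which is forced by local finiteness of $M_\Phi$, and they are unaffected by whether the process has finite or infinite activity.

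For $(i)\Rightarrow(ii)$: if $\Phi$ is itself boundedly finite, \Cref{lem:kallenberg} applies directly. Otherwise, fix an increasing sequence $(B_k)_{k\ge1}$ of relatively compact Borel sets with $B_k\uparrow\X$ and $M_\Phi(B_k)<\infty$, and consider the restrictions $\Phi|_{B_k}$. Since $\overline{B_k}$ is compact and $\Phi$ is a locally finite counting measure, $\Phi(\overline{B_k})<\infty$ a.s., so each $\Phi|_{B_k}$ is a.s.\ finite, hence boundedly finite. Because a point $x\in B_k$ is an atom of $\Phi|_{B_k}$ if and only if it is an atom of $\Phi$, one has $(\Phi|_{B_k})^!_x\dequal(\Phi^!_x)|_{B_k}$ for $M_\Phi$-a.a.\ $x\in B_k$, so hypothesis $(i)$ passes to $\Phi|_{B_k}$. \Cref{lem:kallenberg} then gives that each $\Phi|_{B_k}$ is a mixed Poisson or mixed binomial process on $B_k$.

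It remains to glue these local descriptions into a global one. Restricting a mixed Poisson (resp.\ mixed binomial) process to a smaller relatively compact set again yields a mixed Poisson (resp.\ mixed binomial) process with the same mixing variable $\gamma$ (resp.\ $M$), so the ``type'' is consistent along $(B_k)$: a mixed binomial process is a.s.\ globally finite, so if $\Phi(\X)=\infty$ with positive probability the processes must all be of mixed Poisson type, while otherwise $\Phi(\X)<\infty$ a.s.\ and, letting $k\to\infty$, the points of $\Phi$ are conditionally i.i.d.\ from the normalization of $M_\Phi$, i.e.\ $\Phi$ is mixed binomial. In the mixed Poisson case one recovers the directing variable as an a.s.\ limit ($\gamma=\lim_k\Phi(B_k)/M_\Phi(B_k)$ when $M_\Phi(\X)=\infty$, a conditional strong law of large numbers), and conditionally on $\gamma$ the process $\Phi$ has independent Poisson-distributed increments, so $\Phi\sim\mathrm{MP}(\nu,f_\gamma)$ with $\nu\propto M_\Phi$. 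I expect this gluing step — checking that all the local descriptions are driven by one and the same mixing variable and assembling them, together with the measure-theoretic bookkeeping (compact exhaustion, handling of the ``for $M_\Phi$-a.a.\ $x$'' qualifier under restriction) — to be the main obstacle; the reduction to boundedly finite restrictions and the transfer of $(i)$ are routine.
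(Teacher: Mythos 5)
Your argument for (ii) $\Rightarrow$ (i) matches the paper's in substance: Slivnyak--Mecke for the Poisson part, the size-biased mixing law for the mixed Poisson part (which you correctly identify; the paper obtains the same $f_{\tilde\gamma}(\dd\gamma)\propto\gamma f_\gamma(\dd\gamma)$ by an explicit Laplace-functional computation), and \Cref{prop:palm_mb} for the mixed binomial part. For (i) $\Rightarrow$ (ii) you take a genuinely different route. The paper re-derives Kallenberg's conditional-independence argument from scratch (\Cref{lemma:5.1kall}, relating the law of $\Phi$ given $\Phi(C)=n$ and a randomly chosen atom to the Palm version, followed by a direct proof that the $n$ points of $\Phi^C$ are conditionally i.i.d., so each restriction is mixed binomial), precisely because the original theorem is stated for boundedly finite processes. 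You instead restrict $\Phi$ to a relatively compact exhaustion, note that each restriction is a.s.\ finite and hence unambiguously covered by the original statement of \cite[Theorem 5.3]{Kal(73)}, transfer hypothesis (i) via $(\Phi|_{B_k})^!_x \dequal (\Phi^!_x)|_{B_k}$ (which is correct, and is the right way to pass the hypothesis to restrictions), and conclude that each restriction is mixed binomial (using \Cref{lemma:MP_as_MB} to absorb the finite mixed Poisson case). This is a legitimate and arguably more economical way to sidestep the boundedly-finite versus locally-finite issue; note only that citing \Cref{lem:kallenberg} itself for the boundedly finite case is circular as written --- you mean Kallenberg's original theorem.

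The one soft spot is the gluing. After either route one has exactly the hypothesis of \Cref{lemma:5.2kall} (\cite[Theorem 3.7]{Kallenberg17}): $\Phi^{C_j}$ is mixed binomial for every $j$ along the exhaustion, hence $\Phi$ is mixed Poisson or mixed binomial. The paper invokes that result without proof, and you should do the same, because your by-hand sketch does not go through as stated: restricting a mixed binomial process does \emph{not} preserve the mixing variable $M$ (the restricted count is a binomial thinning of $M$), and the global ``type'' of $\Phi$ cannot be read off any single restriction, since every a.s.\ finite restriction is mixed binomial regardless of whether $\Phi$ is globally mixed Poisson or mixed binomial. The consistency of the directing structure across the exhaustion and the conditional law of large numbers you allude to are precisely the content of the cited theorem; attempting them ad hoc as sketched would leave a real gap.
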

\begin{proof} We proceed by demonstrating the validity of the result in two steps, by separately proving the two implications.

\noindent \emph{Proof of (ii) $\implies$ (i)}.
We start by remarking that if $\Phi$ is either a mixed Poisson or a mixed binomial process, then $\Phi^!_x$ is independent of $x \in \X$, for $M_\Phi$-a.a. $x$. Assume $\Phi$ to be a mixed Poisson, i.e., $\Phi\mid \gamma \sim \mathrm{PP}(\gamma \nu)$ and $\gamma \sim f_\gamma$ is an almost surely positive random variable. Then, by \citep[Lemma 2.2.27]{BaBlaKa}, its Laplace functional equals
\begin{equation*}
    \mathcal L_\Phi(f) = \E\left[ \E\left\{ \mathcal L_{\Phi_\gamma}(f) \mid \gamma \right\}  \right] = \E\left( \exp\left[ - \int_{\X} \left\{1-e^{-f(x)}\right\} \gamma \nu(\dd x)\right] \right).
\end{equation*}
By \cite[Proposition 3.2.1]{BaBlaKa}, the Palm distribution of $\Phi$ satisfies    \begin{equation}\label{eq:der_laplace_palm}
        \evalat{\frac{\partial}{\partial t} \mathcal{L}_{\Phi}(f + tg)}{t=0} = - \int_{\X} g(x) \mathcal{L}_{\Phi_x}(f) M_{\Phi}(\dd x).
    \end{equation}
We proceed by computing the left-hand side of \eqref{eq:der_laplace_palm}, under the assumption that $\Phi$ is a mixed Poisson process. First, note that $M_\Phi(B) = \E\left[\E\left\{ \Phi(B)\mid \gamma \right\} \right] = \nu(B) \E(\gamma)$, for $B \in \Xcr$. Hence, one has 
\begin{equation*}
    \evalat{\frac{\partial}{\partial t} \mathcal{L}_{\Phi}(f + tg)}{t=0} = \E\left( \E\left[ h^\prime(0) \exp\{ h(0) \} \mid \gamma \right] \right),
\end{equation*}
where $h(t) := - \int_\X (1-e^{-f(x) - t g(x)}) \gamma \nu(\dd x)$, and thus $h^\prime(t) := - \int_\X e^{-f(x) - t g(x)} g(x) \gamma \nu(\dd x)$. It follows that
\begin{equation*}
    \begin{aligned}
        \evalat{\frac{\partial}{\partial t} \mathcal{L}_{\Phi}(f &+ tg)}{t=0} \\
        &= \E\left[- \int_\X e^{-f(x)} g(x) \gamma \nu(\dd x) \cdot \exp\left\{ - \int_\X (1-e^{-f(x)}) \gamma \nu(\dd x) \right\} \right]\\
        &= - \int_{\R_+} \int_\X e^{-f(x)} g(x) \exp\left\{ - \int_\X (1-e^{-f(y)}) \gamma \nu(\dd y) \right\} \gamma \nu(\dd x) f_\gamma(\dd \gamma)\\
        &= - \int_\X g(x) \left[ e^{-f(x)} \int_{\R_+} \exp\left\{ - \int_\X (1-e^{-f(y)}) \gamma \nu(\dd y) \right\} \frac{\gamma}{\E(\gamma)} f_\gamma(\dd \gamma) \right] M_\Phi(\dd x).
    \end{aligned}
\end{equation*}
Therefore, by comparing the previous expression with \eqref{eq:der_laplace_palm}, we deduce that, for $M_\Phi$-a.a. $x \in \X$,
\begin{equation*}
    \mathcal{L}_{\Phi_x}(f) = e^{-f(x)} \int_{\R_+} \exp\left\{ - \int_\X (1-e^{-f(y)}) \gamma \nu(\dd y) \right\} \frac{\gamma}{\E(\gamma)} f_\gamma(\dd \gamma) 
\end{equation*}
and the Laplace functional of the Palm version is
\begin{equation*}
    \mathcal{L}_{\Phi^!_x}(f) = \int_{\R_+} \exp\left\{ - \int_\X (1-e^{-f(y)}) \gamma \nu(\dd y) \right\} \frac{\gamma}{\E(\gamma)} f_\gamma(\dd \gamma), 
\end{equation*}
which does not depend on $x$. In particular, it turns out that $\Phi^!_x$ is a mixed Poisson process such that $\Phi^!_x\mid \tilde{\gamma} \sim \mathrm{PP}(\tilde{\gamma}\nu)$ and $\tilde{\gamma} \sim f_{\tilde{\gamma}}$ with $f_{\tilde{\gamma}} (\dd \gamma) \propto \gamma f_{\gamma}(\dd \gamma)$.

If instead $\Phi$ is a mixed binomial process, $\Phi^!_x$ is described by \Cref{prop:palm_mb}: it is still a mixed binomial process with law independent of $x$, for $M_\Phi$-a.a. $x \in \X$.

\noindent \emph{Proof of (i) $\implies$ (ii)}. Conversely, to prove the opposite implication, we need some preliminary lemmas, which are of independent interest. We start by recalling \cite[Lemma 5.1]{Kal(73)}. Given the point process $\Phi$ and the set $C \in \Xcr$, we indicate with $\Phi^C$ the restriction of $\Phi$ on $C$, i.e., $\Phi^C(B) = \Phi(B \cap C)$, for any $B \in \Xcr$. Moreover, let $\N$ denote the set of natural numbers $\{1,2,\ldots\}$.

\begin{lemma}\label{lemma:5.1kall}
If $\Phi$ is a point process with locally finite mean measure $M_\Phi$, then
\begin{equation*}
    \Pp(\Phi \in L \mid \Phi(C) = n, \eta \in \dd x) = \Pp(\Phi_x \in L\mid \Phi_x(C) = n),
\end{equation*}
for $L \in \mathcal M_\X, C \in \Xcr, n \in \N$, and $\lambda_{C,n}$-a.a. $x \in C$, where $\eta$ is the position of a randomly chosen atom of $\Phi^C$ and 
\[
\lambda_{C,n}(\dd x) = \E\{\Phi(\dd x)\mid \Phi(C) = n\}\Pp(\Phi(C) = n) = \Pp(\Phi_x(C) = n) M_\Phi(\dd x).
\]

\end{lemma}
\begin{proof}
Preliminarily, we need to remark an alternative and limiting characterization of Palm versions in terms of the so-called \emph{local Palm probabilities}, denoted by $\mathrm{P}^x$ and defined on $(\Omega, \Acr)$ \citep[see][]{BaBlaKa}. Specifically, for $M_\Phi$-a.a. $x \in \X$, the following definition is given, for $A \in \Acr$,
\[
\mathrm{P}^x(A) = \frac{\E(\Phi(\dd x)\indicator_A)}{M_\Phi(\dd x)}.
\]
Define the family of processes $\{{\Phi_x^{loc}\}}_{x \in \X}$ whose laws are given as follows: $\Pp(\Phi_x^{loc} \in L) = \mathrm{P}^x(\Phi^{-1}(L))$, for any $L \in \mathcal{M}_\X$. Following \cite{BaBlaKa}, we observe that $\Phi_x^{loc} \dequal \Phi_x$, thus the Palm versions of $\Phi$ can be characterized as follows 
\begin{equation}\label{eq:palm_local_charact}
    \Pp(\Phi_x \in L) = \frac{\E(\Phi(\dd x)\mid \Phi \in L)\Pp(\Phi \in L)}{M_\Phi(\dd x)}.
\end{equation}
To prove the lemma, consider $x \in C$, then
\begin{equation}\label{eq:random_point_kall}
    \begin{aligned}
        \Pp(\Phi \in L, &\Phi(C) = n, \eta \in \dd x) = \Pp(\eta \in \dd x \mid \Phi \in L, \Phi(C) = n)  \Pp(\Phi \in L, \Phi(C) = n)\\
        &= \E(\delta_\eta(\dd x) \mid \Phi \in L, \Phi(C) = n) \Pp(\Phi \in L, \Phi(C) = n)\\
        &= \frac{1}{n}\E(\Phi(\dd x) \mid \Phi \in L, \Phi(C) = n) \Pp(\Phi \in L, \Phi(C) = n),
    \end{aligned}
\end{equation}
where the last equality holds since $\eta$ is a randomly chosen atom of $\Phi$. Moreover, applying \eqref{eq:palm_local_charact} to the event $L^\prime := \{\mu \in \mathbb M_\X: \mu \in L, \mu(C) = n\}$, we get
\[
\E(\Phi(\dd x) \mid \Phi \in L, \Phi(C) = n) \Pp(\Phi \in L, \Phi(C) = n) = \Pp(\Phi_x \in L, \Phi_x (C) = n) M_\Phi(\dd x). 
\]
Consequently, plugging the last expression into \eqref{eq:random_point_kall}, it follows that
\begin{equation}\label{eq:lemma51_first}
         \Pp(\Phi \in L, \Phi(C) = n, \eta \in \dd x)
         = \frac{1}{n}\Pp(\Phi_x \in L, \Phi_x(C) = n) M_\Phi(\dd x)
\end{equation}
and, in particular, choosing $L = \mathbb M_\X$,
\begin{equation}\label{eq:lemma51_second}
    \Pp( \Phi(C) = n, \eta \in \dd x) = \frac{1}{n}\Pp( \Phi_x(C) = n) M_\Phi(\dd x).
\end{equation}
Finally, 
\begin{equation*}
    \begin{aligned}
        \Pp(\Phi \in L\mid \Phi(C) = n, \eta \in \dd x) &= \frac{\Pp(\Phi \in L, \Phi(C) = n, \eta \in \dd x)}{\Pp(\Phi(C) = n, \eta \in \dd x)}\\
        &= \frac{\Pp(\Phi_x \in L, \Phi_x(C) = n)}{\Pp( \Phi_x(C) = n) }\\
        &= \Pp(\Phi_x \in L\mid \Phi_x(C) = n),
    \end{aligned}
\end{equation*}
where the second equality follows from \eqref{eq:lemma51_first} and \eqref{eq:lemma51_second}. Clearly, the previous computation holds when $\Pp(\Phi(C) = n, \eta \in \dd x) > 0$, that is $\lambda_{C,n}(\dd x) = \Pp(\Phi_x(C) = n) M_\Phi(\dd x) > 0$.
\end{proof}

Secondly, we state another key result which we need for the proof, corresponding to \cite[Theorem 3.7]{Kallenberg17}.
\begin{lemma}\label{lemma:5.2kall}
    Let $\Phi$ be a point process on $\X$ and let $C_j\uparrow \X$, $C_j \in \Xcr$ and relatively compact, $j \geq 1$. Then $\Phi$ is either a mixed Poisson or mixed binomial process if and only if $\Phi^{C_j}$ is a mixed binomial process for every $j \geq 1$. 
\end{lemma}

We can now prove that (i) implies (ii). Assume that $\Phi^!_x$ is independent of $x$, that is $\Phi^!_x = \Phi_x - \delta_x \dequal \xi$, for some point process $\xi$, for $M_\Phi$-a.a. $x \in \X$. Consider $C \in \Xcr$ relatively compact set and $n \in \N$ such that $\Pp(\xi(C) = n) > 0$. Let $k \in \N$ and $\bm B = (B_1,\ldots, B_k) \in \Xcr^{\otimes k}$ whose components are a partition of $C$. Let $\bm z \in (\N \cup \{0\})^k$ and $\bm e_1 = (1,0,\ldots,0)$ with $k-1$ zeros. Let $\eta$ as in Lemma \ref{lemma:5.1kall}. Assume, without loss of generality, that $x \in B_1$; for $\lambda_{C,n}$-a.a. $x$, it holds that
\begin{equation*}
    \begin{aligned}
        \Pp((\Phi - \delta_\eta)(\bm B) &= \bm z \mid \Phi(C) = n, \eta \in \dd x) =  \Pp( \Phi(\bm B) = \bm z + \bm e_1 \mid \Phi(C) = n, \eta \in \dd x)\\
        &= \Pp( \Phi_x(\bm B) = \bm z + \bm e_1 \mid \Phi_x(C) = n) \\
        &= \Pp( (\Phi_x - \delta_x)(\bm B) = \bm z \mid (\Phi_x - \delta_x)(C) = n - 1)\\
        &= \Pp( \xi(\bm B) = \bm z \mid \xi(C) = n -1),
    \end{aligned}
\end{equation*}
where the second equality follows from Lemma \ref{lemma:5.1kall} and the last one from the definition of $\xi$.
Therefore, the following holds: 
\begin{equation}\label{eq:fact_cond_indep}
    \begin{aligned}
        \Pp((\Phi - \delta_\eta)(\bm B) &= \bm z, \eta \in \dd x\mid \Phi(C) = n) \\
        &=\Pp((\Phi - \delta_\eta)(\bm B) = \bm z \mid \Phi(C) = n, \eta \in \dd x) \Pp( \eta \in \dd x \mid \Phi(C) = n)  \\
        &=  \Pp( \xi(\bm B) = \bm z \mid \xi(C) = n -1) \Pp(\eta \in \dd x \mid \Phi(C) = n).
    \end{aligned}
\end{equation}
Then, define $\bm B = (B_1,\ldots, B_k)$ as follows: let $k = \Bar{k} + 1$, $\Bar{k} \leq n$, consider $\Bar{k}$ points in $C$, denoted with $\Bar{x}_1,\ldots, \Bar{x}_{\Bar{k}}$, and let $B_j = \dd\Bar{x}_j$, $j \leq \Bar{k}$, be a neighborhood of $\Bar{x}_j$, such that $B_i \cap B_j = \emptyset$, for any $i,j \leq \Bar{k}, i \neq j$. Finally, define $B_k = C \setminus \cup_{j=1}^{\Bar{k}}\dd \Bar{x}_j$ and $\bm z = (z_1,\ldots, z_{\Bar{k}}, 0)$ such that $\sum_{j=1}^{\Bar{k}} z_j = n - 1$. Then, the factorization in \eqref{eq:fact_cond_indep} gives
\begin{equation*}
    \begin{aligned}
        &\Pp\Big( \bigcap_{j=1}^{\Bar{k}} (\Phi - \delta_\eta)(\dd \Bar{x}_j) = z_j, (\Phi - \delta_\eta)( C \setminus \cup_{j=1}^{\Bar{k}}\dd \Bar{x}_j) = 0, \eta \in \dd x\mid \Phi(C) = n \Big) \\
        &=  \Pp\Big(\bigcap_{j=1}^{\Bar{k}} \xi(\dd \Bar{x}_j) = z_j, \xi( C \setminus \cup_{j=1}^{\Bar{k}}\dd \Bar{x}_j) = 0 \mid \xi(C) = n -1\Big) \Pp( \eta \in \dd x\mid \Phi(C) = n),
    \end{aligned}
\end{equation*}
from which we conclude that the distribution of $\eta$, conditionally to $\Phi(C) = n$, is independent of the position of the remaining $n-1$ points. Then, we conclude that a randomly chosen point of $\Phi - \delta_\eta$ is conditionally independent of the others, given that $(\Phi - \delta_\eta)(C) = n-1$, by repeating the same argument above for the process $\Phi - \delta_\eta$. Note that such an argument applies to $\Phi - \delta_\eta$ since $(\Phi - \delta_\eta)_x - \delta_x = \Phi_x -\delta_x -\delta_\eta = \xi -\delta_\eta$ which does not depend on $x$. Wrapping up, we have just shown that, conditionally on $\Phi(C) = n$, the $n$ points of $\Phi$ are independent between them. Equivalently, $\Phi^C$ is a binomial process given $\Phi(C) = n$, that is $\Phi^C$ is marginally a mixed binomial process. Then, taking a sequence of sets $C_j \uparrow \X, C_j \in \Xcr$ and relatively compact, $j \geq 1$, it holds that $\Phi^{C_j}$ is a mixed binomial process for every $j \geq 1$. Finally, by an application of \Cref{lemma:5.2kall}, we conclude that $\Phi$ is either a mixed Poisson or mixed binomial process.
\end{proof}

\section{Proof of Theorems \ref{thm:marg} and \ref{thm:post}}\label{app:proofs_main_theorems}

The proof is based on the study of the Laplace functional of $\mu$. We remind that, for any measurable function $f: \X \rightarrow \R_+$, the Laplace functional of $\mu$ equals
\[
    \E\left[ \exp\left\{ - \int_\X f(x) \mu (\dd x)\right\} \right] = \E\left[ \exp\left\{ - \int_{\X \times (0, 1]} s f(x) \Psi(\dd x \, \dd s )\right\} \right].
\]
In the following, we will use the shorthand notations $\mu (f) = \int_\X f(x) \mu (\dd x)$ and $\Psi(sf) = \int_{\X \times (0, 1]} s f(x) \Psi(\dd x \, \dd s)$.
Let $\bm Z := (Z_1, \ldots, Z_n)$ and denote by $\bm z$ its realization, with associated likelihood function $L(\bm z \mid \mu)$. An application of Bayes' theorem entails
\begin{equation}\label{eq:post_laplace}
    \E\left\{e^{-\mu(f)} \mid \bm Z = \bm z \right\} = \frac{\E\left\{ e^{-\Psi(sf)} L(\bm z \mid \mu)\right\}}{\E\left\{L(\bm z \mid \mu)\right\}},
\end{equation}
where, at the denominator, we recognize the marginal likelihood.

Let $x^*_1, \ldots, x^*_k$ be the $k$ distinct features displayed in $\bm z$ and denote by $m_1, \ldots, m_k$ the corresponding frequency counts.
Let $\Theta = \X \times (0, 1]$; arguing as in \cite[Appendix A]{Jam(17)}, the likelihood function can be shown to be
\[
  L(\bm z \mid \mu) =  \int_{\Theta^k}   e^{n \int \log(1 - t) \Psi(\dd z \, \dd t)} \prod_{\ell=1}^{k} \frac{s_\ell^{m_\ell}}{(1 - s_\ell)^{m_\ell}} \delta_{x^*_\ell}(x_\ell) \Psi^k(\dd \bm x \, \dd \bm s)
\]
where $\Psi^k$ is the $k$-th power of $\Psi$.
Note that the term $\prod_{\ell=1}^k \delta_{x^*_\ell}(x_\ell)$ entails that the integrand is zero on sets of the type
\[
    \{(\bm x, \bm s) \in \Theta^k : \; x_i = x_j \text{ for } i \neq j \}.
\]
Therefore we can replace $\Psi^k$ with the $k$-th factorial power $\Psi^{(k)}$. Then, an application of the \textsc{clm} formula (cf. \Cref{lem:clm}) yields the following expression for the numerator of \eqref{eq:post_laplace}, 
\begin{align*}
    \E\left\{ e^{-\Psi(sf)} L(\bm z \mid \mu)\right\} &=  \E \left\{ \int_{\Theta^k } e^{ -\Psi(tf - n\log(1 - t))} \prod_{\ell=1}^{k} \frac{s_\ell^{m_\ell}}{(1 - s_\ell)^{m_\ell}} \delta_{x^*_\ell}(x_\ell) \Psi^{(k)}(\dd \bm x \, \dd \bm s)\right\} \\
    &= \int_{\Theta^k } \E\left\{e^{ -\Psi^!_{\bm x, \bm s}(tf - n\log(1 - t))}\right\} \\
    & \qquad \times \prod_{\ell=1}^k e^{-s_\ell f(x_\ell) + n \log(1 - s_\ell)} \frac{s_\ell^{m_\ell}}{(1 - s_\ell)^{m_\ell}} \delta_{x^*_\ell}(x_\ell) M_{\Psi}^{(k)} (\dd \bm x \, \dd \bm s).
\end{align*}
By assumptions, the disintegration in \eqref{eq:mpsi_disinteg} holds true. Moreover, integrating with respect to the $x_\ell$'s, we obtain
\begin{equation} \label{eq:numerator_general}
\begin{split}
   \E\left\{ e^{-\Psi(sf)} L(\bm z \mid \mu)\right\} &= \int_{(0, 1]^k} \E\left\{e^{ -\Psi^!_{\bm x^*, \bm s}(tf - n\log(1 - t))}\right\} \\
   & \qquad \times\prod_{\ell=1}^k e^{- s_\ell f(x^*_\ell) }  s_\ell^{m_\ell} (1 - s_\ell)^{n - m_\ell} \rho^{(k)}(\dd \bm s \mid \bm x^*) \tilde m^{(k)}_{\xi}(\dd \bm x^*).
\end{split}
\end{equation}
The expression in Theorem \ref{thm:marg} follows by setting $f=0$ in the equation above, which coincides with the marginal likelihood of the data.\\

As for the proof of Theorem \ref{thm:post}, we can simply evaluate \eqref{eq:post_laplace}, where the numerator equals \eqref{eq:numerator_general}, while the denominator coincides with the marginal likelihood, i.e., Equation \eqref{eq:numerator_general} when 
$f=0$. Thus, the posterior Laplace functional in \eqref{eq:post_laplace} boils down to
\[
\begin{split}
& \E\left\{e^{-\mu(f)} \mid \bm Z = \bm z \right\} \\
& \qquad = \frac{\int_{(0, 1]^k} \E\left\{e^{ -\Psi^!_{\bm x^*, \bm s}(tf - n\log(1 - t))}\right\} \prod_{\ell=1}^k e^{- s_\ell f(x^*_\ell) }  s_\ell^{m_\ell} (1 - s_\ell)^{n - m_\ell} \rho^{(k)}(\dd \bm s \mid \bm x^*)}{\int_{(0, 1]^k} \E\left\{e^{\Psi^!_{\bm x^*, \bm s}(n\log(1 - t))}\right\} \prod_{\ell=1}^k  s_\ell^{m_\ell} (1 - s_\ell)^{n - m_\ell} \rho^{(k)}(\dd \bm s \mid \bm x^*)}.
\end{split}
\]
To conclude the proof of Theorem \ref{thm:post}, we need to show that the right-hand side above coincides with the Laplace transform of the measure 
\[
    \eta := \sum_{\ell=1}^k S^*_\ell \delta_{x^*_\ell} + \mu^\prime
\]
where the laws of $S^*_1, \ldots, S^*_k$ and $\mu^\prime$ are as in the statement of  Theorem \ref{thm:post}. This is indeed true, since the Laplace functional of $\eta $ is 
\begin{align*}
    \E\left\{e^{- \eta (f)}\right\} &= \E\left\{e^{- \sum_{\ell=1}^k S^*_\ell f(x^*_\ell) -\mu^\prime(f)}\right\} \\
    &= \int_{(0, 1]^k} \E\left\{e^{-\mu^\prime(f)} \mid \bm S^* = \bm s\right\} \prod_{\ell=1}^k e^{- s_\ell f(x^*_\ell)} f_{\bm S^*}(\dd \bm s)  \\
    &= \int_{(0, 1]^k} \frac{\E\left\{e^{- \Psi^!_{\bm x^*, \bm s} ( t f - n \log(1- t))}\right\}}{\E\left\{e^{\Psi^!_{\bm x^*, \bm s} ( n \log(1- t))}\right\}} 
    \\ & \quad \times \frac{\E\left\{e^{\Psi^!_{\bm x^*, \bm s} ( n \log(1- t))}\right\} \prod_{\ell=1}^k  e^{- s_\ell f(x^*_\ell)} s_\ell^{m_\ell} (1 - s_\ell)^{n-m_\ell} \rho^{(k)}(\dd \bm s \mid \bm x^*)}{ \int_{(0, 1]^k} \E\left\{e^{\Psi^!_{\bm x^*, \bm s} (n \log(1- t))}\right\} \prod_{\ell=1}^k  s_\ell^{m_\ell} (1 - s_\ell)^{n-m_\ell} \rho^{(k)}(\dd \bm s \mid \bm x^*)}
\end{align*}
which is exactly the Laplace functional of $\mu$ that we have found before, and this concludes the proof.

\section{Results and proofs of Section \ref{sec:predictive_characterization}}\label{app:pred_characterization}

In order to prove the main result of Section \ref{sec:predictive_characterization}, two lemmas are necessary and are provided in the following. In particular, central to the proof of point (i) is a novel characterization of the Poisson point process in terms of its (higher-order) reduced Palm distribution that might be of independent interest. We report it in the following lemma.

\begin{lemma}\label{lem:poi}
    Let $\Phi$ be a point process with locally finite mean measure $M_\Phi$. Then the following assertions are equivalent.
    \begin{enumerate}
        \item[(i)] $\Phi$ is a Poisson  process.
        \item[(ii)] The law of $\Phi^!_{\bm x}$ does not depend on $\bm x$. That is, for $M_\Phi^{(k)}$-almost all $\bm x = (x_1, \ldots, x_k)$ and $M_\Phi^{(m)}$-almost all $\bm y = (y_1, \ldots, y_m)$, it holds that $\Phi^!_{\bm x} \dequal \Phi^!_{\bm y}$.
    \end{enumerate}
    Moreover, if (ii) holds, then $\Phi^!_{\bm x} \dequal \Phi^!_{\bm y} \dequal \Phi$. 
\end{lemma}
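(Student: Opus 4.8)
The plan is to prove the two implications (i) $\Rightarrow$ (ii) and (ii) $\Rightarrow$ (i), and then observe that the final ``moreover'' clause drops out of the argument for (ii) $\Rightarrow$ (i).

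\textbf{Direction (i) $\Rightarrow$ (ii).} This is essentially the classical \emph{Mecke--Slivnyak theorem}: for a Poisson process $\Phi$ with $\sigma$-finite intensity $M_\Phi$, the reduced Palm version $\Phi^!_x$ at a point $x$ has the same law as $\Phi$ itself. I would recall the Mecke equation $\E\!\left[\int_\X g(x,\Phi)\,\Phi(\dd x)\right] = \int_\X \E[g(x,\Phi+\delta_x)]\,M_\Phi(\dd x)$, which characterizes the Poisson process, and read off from it that the disintegrating kernel of the Campbell measure is $\plaw^x_\Phi = \plaw_{\Phi+\delta_x}$, hence $\plaw^x_{\Phi^!} = \plaw_\Phi$ for $M_\Phi$-a.e.\ $x$. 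Iterating (via the multivariate Mecke equation, or induction on $k$ using that $\Phi + \sum_{j=1}^{k-1}\delta_{x_j}$ is again Poisson) gives $\Phi^!_{\bm x}\dequal\Phi$ for $M_\Phi^{(k)}$-a.e.\ $\bm x$ and every $k$; in particular the law of $\Phi^!_{\bm x}$ does not depend on $\bm x$, and it coincides across all orders. This also already establishes the ``moreover'' statement under hypothesis (i).

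\textbf{Direction (ii) $\Rightarrow$ (i).} This is the substantive part and the main obstacle. I would first upgrade (ii) from ``law independent of $\bm x$'' to ``law equal to that of $\Phi$'': taking $k=1$, the reduced Campbell formula gives, for bounded measurable $h$ on $\M_\X$, $\E\!\left[\int_\X h(\Phi - \delta_x)\,\Phi(\dd x)\right] = \int_\X \E[h(\Phi^!_x)]\,M_\Phi(\dd x) = \E[h(\Phi^!_{x_0})]\,M_\Phi(\X)$ for the common law. Plugging in $h\equiv 1$ and comparing requires care when $M_\Phi(\X)=\infty$, so I would instead localize: fix a relatively compact $B$ with $0 < M_\Phi(B) < \infty$, restrict attention to $\Phi|_B$, and run the argument there, then let $B\uparrow\X$. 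The cleaner route is to use the reduced Campbell/Mecke formula to compute the Laplace functional. For $f \ge 0$, Campbell's formula applied to $h(\Phi) = e^{-\int f\,\dd\Phi}$ combined with $\Phi^!_x \dequal \Phi$ yields a functional equation for $L_\Phi(f) := \E[e^{-\int f\,\dd\Phi}]$; differentiating the relation $\E\!\left[\int (1-e^{-f(x)})e^{-\int f\,\dd\Phi}\,\Phi(\dd x)\right]$ against $M_\Phi$ and using that the reduced Palm law equals $\plaw_\Phi$, one obtains that $L_\Phi(f)$ satisfies the same ordinary-differential relation in a scaling parameter that characterizes $\exp\{-\int(1-e^{-f})\,\dd M_\Phi\}$, which is precisely the Laplace functional of $\mathrm{PP}(M_\Phi)$. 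Uniqueness of the Laplace functional then forces $\Phi\sim\mathrm{PP}(M_\Phi)$. Alternatively, and perhaps more transparently, I would invoke the known characterization that a simple point process is Poisson if and only if it satisfies the Mecke equation, and verify the Mecke equation directly from (ii): for $M_\Phi$-a.e.\ $x$, the reduced Palm kernel is $\plaw_\Phi$, so $\plaw^x_\Phi = \plaw_{\Phi^!_x + \delta_x} = \plaw_{\Phi + \delta_x}$, which is exactly the disintegration identity equivalent to the Mecke equation; simplicity of $\Phi$ can be extracted by noting that the $k$-th factorial moment measures must then be $M_\Phi^{k}$, which is diagonal-free, forcing a.s.\ distinct atoms.

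\textbf{The ``moreover'' clause.} Under (ii), having shown in the (ii) $\Rightarrow$ (i) step that $\Phi$ is $\mathrm{PP}(M_\Phi)$, the (i) $\Rightarrow$ (ii) step (Mecke--Slivnyak) gives $\Phi^!_{\bm x} \dequal \Phi$ for all orders $k$ and $M_\Phi^{(k)}$-a.e.\ $\bm x$, so in particular $\Phi^!_{\bm x}\dequal\Phi^!_{\bm y}\dequal\Phi$ as claimed. The delicate points to watch are: (a) all statements hold only up to $M_\Phi^{(k)}$-null sets, so I must phrase the functional equations ``for $M_\Phi$-almost every $x$'' and be careful that the exceptional sets do not accumulate when combining orders; (b) the case $M_\Phi(\X)=\infty$, handled by the localization-to-$B$-then-monotone-limit device; and (c) ensuring measurability of $x\mapsto \plaw^x_\Phi$, which is part of the standing disintegration hypothesis ($M_\Phi$ locally finite hence $\sigma$-finite, so the Palm kernel exists). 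I expect step (ii) $\Rightarrow$ (i) — extracting ``Poisson'' from the Palm-invariance, and in particular pinning down both the infinite-intensity case and simplicity — to be where essentially all the work lies.
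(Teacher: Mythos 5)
Your direction (i) $\Rightarrow$ (ii) is fine and matches the paper (multivariate Mecke/Slivnyak). The problem is in (ii) $\Rightarrow$ (i): every route you propose presupposes the identity $\Phi^!_x \dequal \Phi$, which is \emph{not} part of hypothesis (ii) — it is the ``moreover'' conclusion of the lemma. Hypothesis (ii) only gives constancy of the law of $\Phi^!_{\bm x}$ in $\bm x$, and your proposed ``upgrade'' at order $k=1$ does not work. The identity you write, $\E\bigl[\int_\X h(\Phi - \delta_x)\,\Phi(\dd x)\bigr] = \E[h(\Phi^!_{x_0})]\,M_\Phi(\X)$, reduces to a tautology for $h\equiv 1$ and gives nothing for general $h$, because $\E\bigl[\int_\X h(\Phi-\delta_x)\,\Phi(\dd x)\bigr]$ is a size-biased quantity that is not $\E[h(\Phi)]\,M_\Phi(\X)$ in general. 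Concretely, any mixed Poisson process with non-degenerate mixing law, or any mixed binomial process with non-Poisson $M$, has a reduced Palm kernel $\Phi^!_x$ whose law is independent of $x$ at order one and yet differs from the law of $\Phi$ (for a mixed binomial with $q_M$, one gets $q_{\tilde M}(m)\propto (m+1)q_M(m+1)$); this is exactly the content of the companion Lemma on mixed Poisson/binomial processes. So order-one Palm invariance is strictly weaker than the Mecke identity, and your localization/Laplace-functional machinery cannot close this gap.

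The missing ingredient is the \emph{cross-order} part of (ii), which you never invoke: the hypothesis compares $\Phi^!_{\bm x}$ for $\bm x$ of length $k$ with $\Phi^!_{\bm y}$ for $\bm y$ of length $k+1$. The paper's proof takes $\bm y = (x_1,\dots,x_k,y)$ and uses the Palm algebra $\Phi^!_{\bm y} \dequal \bigl(\Phi^!_{\bm x}\bigr)^!_{y}$ together with (ii) to conclude $\bigl(\Phi^!_{\bm x}\bigr)^!_{y} \dequal \Phi^!_{\bm x}$; that is, $\Phi^!_{\bm x}$ coincides with its own reduced Palm version, hence is Poisson by the Slivnyak--Mecke characterization. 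Taking $k=1$ and using that the Palm kernel family (with the mean measure) determines the law of a point process, one concludes that $\Phi$ itself is Poisson, and the ``moreover'' clause then follows from direction (i) $\Rightarrow$ (ii). To repair your argument you would need to insert this Palm-algebra step; as written, the proof does not go through.
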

\begin{proof}
    If $\Phi$ is a Poisson process, then $\Phi^!_{\bm x} \dequal \Phi^!_{\bm y} \dequal  \Phi$ by the multivariate Mecke equation \cite[Theorem 4.4]{Las(17)}. Thus, this shows that (i) implies (ii).

On the other hand, assume that (ii) holds true, we need to show that $\Phi$ is a Poisson process. To this end, 
consider $\bm x = (x_1, \ldots, x_k)$ and $\bm y = (x_1, \ldots, x_k, y)$, i.e., $\bm y$ is defined by adding a single point $y$ to $\bm x$.
Then, by the Palm algebra \cite[Proposition 3.3.9]{BaBlaKa} we have
\[
    \Phi^!_{\bm y} \dequal \left(\Phi^!_{\bm x}\right)^!_{y} \dequal \Phi^!_{\bm x}
\]
where the last equality holds by hypothesis.
That is, we have proven that $\Phi^!_{\bm x}$ is a Poisson process using Slivnyak-Mecke \cite[Theorem 3.2.4]{BaBlaKa}.
Setting $k=1$, we have that $\Phi^!_x$ is Poisson and by hypothesis, the law does not depend on $x$. Therefore, we conclude that $\Phi$ is a Poisson process since the family of Palm distributions of a point process characterizes its law. See, e.g.,  \cite[Proposition 3.1.17]{BaBlaKa}.
\end{proof}

Moreover, to prove point (ii) of Theorem \ref{teo:pred_char}, we need a characterization of mixed Poisson and mixed binomial processes. We provide it in the next lemma, whose proof is a slight extension of a result in \cite{Kal(73)}.
\begin{lemma}\label{lem:mbp}
    Let $\Phi$ be a point process with locally finite mean measure $M_\Phi$. Then the following statements are equivalent.
    \begin{enumerate}
        \item[(i)] $\Phi$ is a mixed Poisson or a mixed binomial point process.
        \item[(ii)] The law of $\Phi^!_{\bm x}$ depends on $\bm x$ only through its cardinality.
        That is, for $M_\Phi^{(k)}$-almost all $\bm x = (x_1, \ldots, x_k)$ and $\bm y = (y_1, \ldots, y_k)$, it holds that $\Phi^!_{\bm x} \dequal \Phi^!_{\bm y}$.
    \end{enumerate}
\end{lemma}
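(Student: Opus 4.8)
The plan is to prove the two implications separately: (i) $\Rightarrow$ (ii) by a direct computation of the reduced Palm kernels of $\Phi$, and (ii) $\Rightarrow$ (i) through the characterization of \citet{Kal(73)}, in a slightly extended form. Everything is tied together by the observation that (ii), asserted for all orders $k$, trivially contains its $k=1$ instance, whereas the computation in (i) $\Rightarrow$ (ii) returns the invariance at every order; hence it suffices to establish (i) $\Rightarrow$ (ii) in full together with the implication from the $k=1$ case of (ii) to (i).

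For (i) $\Rightarrow$ (ii) I would disintegrate the $k$-th order reduced Campbell measure $\mathscr C^{!(k)}_\Phi$ against $M_\Phi^{(k)}$. If $\Phi \sim \mathrm{MP}(\nu, f_\gamma)$, conditioning on $\gamma$ and using \Cref{lem:poi} (the reduced Palm version of $\mathrm{PP}(\gamma\nu)$ at any configuration is again $\mathrm{PP}(\gamma\nu)$) gives $\mathscr C^{!(k)}_\Phi(\dd\bm x\times L) = \nu^{\otimes k}(\dd\bm x)\,\E_\gamma[\gamma^k\,\plaw_{\mathrm{PP}(\gamma\nu)}(L)]$, while $M_\Phi^{(k)} = \E[\gamma^k]\,\nu^{\otimes k}$; dividing, $\plaw_{\Phi^!}^{\bm x}$ is the law of $\mathrm{MP}(\nu, f_\gamma^{(k)})$ with the $k$-size-biased mixing density $f_\gamma^{(k)}(\dd\gamma) \propto \gamma^k f_\gamma(\dd\gamma)$, which does not depend on $\bm x$. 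If $\Phi \sim \mathrm{MB}(\nu, q_M)$, conditioning on $M = m \geq k$ the deleted ordered $k$-tuple is $\nu^{\otimes k}$-distributed and independent of the remaining $\mathrm{BP}_{m-k}(\nu)$ configuration, and counting the $m^{(k)}$ ordered $k$-tuples gives $\plaw_{\Phi^!}^{\bm x} = \mathrm{MB}(\nu, q_M^{(k)})$ with $q_M^{(k)}(\ell) \propto (\ell+k)^{(k)} q_M(\ell+k)$, again free of $\bm x$. Both tilted laws are well defined because the standing $\sigma$-finiteness of $M_\Psi^{(k)}$ forces $\E[\gamma^k] < \infty$, resp.\ $\E[M^{(k)}] < \infty$.

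For the implication from (ii) with $k=1$ to (i) I would start from the disintegration of the Campbell measure recalled in \Cref{sec:prevwork}, which in functional (Mecke) form reads $\E[\int_\X f(x,\Phi)\,\Phi(\dd x)] = \int_\X \E[f(x, \Phi^!_x + \delta_x)]\,M_\Phi(\dd x)$. Writing $\mathcal L_Q$ for the Laplace functional common to all $\Phi^!_x$ and taking $f(x,\omega) = g(x)\exp\{-\int_\X h\,\dd\omega\}$ for nonnegative test functions $g, h$ produces the identity
\[
-\frac{\dd}{\dd\epsilon}\mathcal L_\Phi(h + \epsilon g)\Big|_{\epsilon = 0} = \mathcal L_Q(h)\int_\X g(x)\,e^{-h(x)}\,M_\Phi(\dd x).
\]
This is precisely the input of Kallenberg's Palm characterization: location invariance of the reduced Palm kernel forces $\Phi$ to be either a mixed Poisson or a mixed binomial process based on a fixed measure $\nu$ on $\X$ proportional to $M_\Phi$ (the normalized mean measure, consistently with the convention in \Cref{ex:mixed_bin_general}). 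The extension over \citet{Kal(73)} is that here $M_\Phi$ is only assumed $\sigma$-finite and may have atoms; I would handle this by restricting the identity above to relatively compact Borel sets $B$ with $M_\Phi(B) < \infty$, reading off the law of $\Phi$ on $B$, and then letting $B \uparrow \X$ along a fixed exhausting sequence.

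The main obstacle is this converse direction: turning the single functional identity into the full distributional conclusion, that is, disentangling the random ``size'' of the process (the latent $\gamma$ or $M$) from the deterministic ``shape'' $\nu$ and deciding between the mixed Poisson and mixed binomial alternatives. A self-contained argument would reproduce Kallenberg's reasoning, e.g.\ by specializing $h = t\,\indicator_B$ and analysing the resulting differential relation for the probability generating function of $\Phi(B)$ as $B$ ranges over Borel sets, and then verifying that the localization and limiting steps are compatible with the merely $\sigma$-finite, possibly atomic mean measure permitted in our framework.
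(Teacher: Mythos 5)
Your proposal is correct and takes essentially the same approach as the paper: the forward direction rests on showing that the reduced Palm versions of mixed Poisson and mixed binomial processes are again processes of the same type with a size-biased mixing law independent of the locations (the paper obtains your order-$k$ formulas by Palm-algebra induction from the order-$1$ case, cf.\ \Cref{prop:palm_mb}, rather than by your one-shot disintegration of the reduced Campbell measure), and the converse reduces to $k=1$ and invokes Kallenberg's Theorem~5.3. The localization-and-limit argument you flag as the main obstacle in that converse is precisely what the paper carries out in \Cref{lem:kallenberg}, whose point is to re-derive Kallenberg's result under the locally finite (rather than boundedly finite) convention for point processes.
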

\begin{proof}
    The proof of our lemma requires the use of \Cref{lem:kallenberg}. 
Using this characterization, proceed as follows to prove \Cref{lem:mbp} . First, (ii) implies (i) by using \cite[Theorem 5.3]{Kal(73)}. Indeed, by setting $k=1$, condition (ii) states that $\Phi^!_{x}$ does not depend on $x$, which implies $\Phi$ to be either a mixed Poisson or a mixed binomial process (\Cref{lem:kallenberg}). On the other side, (i) implies (ii). Indeed, if $\Phi$ is a mixed Poisson process, then $\Phi^!_{x_1}$ does not depend on $x_1$ (\Cref{lem:kallenberg}) and it is still a mixed Poisson (see the proof of the inverse implication of \Cref{lem:kallenberg}). Conversely, if $\Phi$ is a mixed binomial process, then $\Phi^!_{x_1}$ is still a mixed binomial process which does not depend on $x_1$ (see \Cref{prop:palm_mb}). Therefore, in both cases, $\Phi^!_{(x_1, x_2)} = \left(\Phi^!_{x_1} \right)^!_{x_2}$ does not depend on $(x_1,x_2)$ and it is still either a mixed Poisson or a mixed binomial process. Continuing with this argument, the proof follows for any $k$.  
\end{proof}

\subsection{Proof of Theorem \ref{teo:pred_char}}

\noindent\emph{Proof of point (i)}.
It is clear that the predictive distribution of $Z^\prime_{n+1}$ depends on the sampling information as the law of $\mu^\prime$, or equivalently $\Psi^\prime$, in Theorem \ref{thm:post} does.
Therefore, the statement of Theorem \ref{teo:pred_char}, point (i), is equivalent to saying that $\Psi^\prime$ depends only on $n$ if and only if $\Psi$ is a Poisson process.
From \eqref{eq:density_psiprime}, it is clear that $\Psi^\prime$ depends on $n$ and $\Psi^!_{\bm x^*, \bm s^*}$, where $\bm x^*$ are the observed distinct feature labels and $\bm s^*$ is distributed as in Theorem \ref{thm:post}.
Then, the proof follows from Lemma \ref{lem:poi} which characterizes the Poisson process as the unique point process for which the reduced Palm kernel $\Psi^!_{\bm x^*, \bm s^*}$ does not depend on $(\bm x^*, \bm s^*)$.\\

\noindent\emph{Proof of point (ii)}.
The proof follows by arguing as in the proof of Theorem \ref{teo:pred_char}, point (i), above but invoking Lemma \ref{lem:mbp} instead of Lemma \ref{lem:poi}.

\section{Proofs and additional details of  \Cref{sec:sncp_model_and_application}}\label{app:sncp_appendix}

\subsection{Proofs of Section \ref{sec:sncp_model_and_application}}\label{app:proofs_cox}

We first characterize the reduced Palm distribution for the Cox process in \eqref{def:psi_sncp}. This follows from an application of Theorem 3 of \cite{Ber26_Palm}. Let 
\[
    \eta(y_1, \ldots, _k) = \int \gamma^k \prod_{j=1}^k \kappa(y_j; \theta) \nu (\dd \theta \, \dd \gamma).
\]
\begin{lemma}\label{thm:red_palm_gensncp_ibp} 
Let $\Psi$ as in \eqref{def:psi_sncp} and $(\bm{x}, \bm{s}) = ((x_1,s_1), \ldots, (x_k,s_k)) \in (\X \times (0,1])^k$. Then, the reduced Palm version of $\Psi$ at $(\bm{x}, \bm{s})$ admits the following representation
\begin{equation*}
    \Psi^!_{(\bm{x}, \bm{s})}\mid \bm{T} \dequal \Psi + \sum_{h = 1}^{|\bm{T}|} \Psi_{\zeta_{{\bm{x}}_h}},\qquad \prob(\bm{T} = \bm{t}) \propto \prod_{h=1}^{|\bm{t}|} \eta(\bm{x}_h),
\end{equation*}
where $\bm{T} := (T_1,\ldots,T_k)$ are latent indicators describing a partition of $\bm{x}$ into $|\bm{T}|$ clusters and
\begin{equation*}
    \begin{aligned}
        \Psi_{\zeta_{{\bm{x}}_h}} \mid \zeta_{{\bm{x}}_h} = (\theta_{{\bm{x}}_h}, \gamma_{{\bm{x}}_h}) &\sim \textsc{pp}(\rho(\dd s) \gamma_{{\bm{x}}_h} \kappa(x; \theta_{{\bm{x}}_h}) \,\dd x)\\
        \zeta_{{\bm{x}}_h}  &\sim f_{{\bm{x}}_h}(\dd \theta\, \dd \gamma) \propto \gamma^{n_h} \prod_{x_j \in 
        \bm{x}_h}\kappa(x_j;\theta) \nu(\dd \theta\, \dd \gamma),
    \end{aligned}
\end{equation*}
where $\bm{x}_h = (x_j: T_j = h)$ and $n_h$ is the cardinality of $\bm{x}_h$.
Finally, the processes $\Psi$ and $\Psi_{\zeta_{{\bm{x}}_h}}$, $h = 1,\ldots, |\bm{T}|$, are mutually independent conditionally to $\bm{T}$.
\end{lemma}
\begin{proof}
If $\rho((0,1])<\infty$, then $\Psi$ is the independent $\rho/\rho((0,1])$-marking of the ground SNCP $\Phi=\Psi(\cdot\times(0,1])$. The result follows from  Proposition 3.2.14 in \cite{BaBlaKa}, together with Theorem 3 in \cite{Ber26_Palm}. 

Assume now that $\rho((0,1])=\infty$ and $\rho((\varepsilon,1])<\infty$ for every $\varepsilon>0$.
 Let $A_\varepsilon:=\X\times(\varepsilon,1]$, $\rho_\varepsilon:=\rho_{|(\varepsilon,1]}$, $c_\varepsilon:=\rho((\varepsilon,1])$, and $\Psi^{(\varepsilon)} := r_\varepsilon(\Psi) = \Psi_{|A_\varepsilon}$.
Fix $\varepsilon<\min_i s_i$. Then, conditionally on $\Lambda$, $\Psi^{(\varepsilon)}\mid\Lambda \sim \textsc{pp}\left(\rho_{|(\varepsilon,1]}(ds) L_\Lambda(x) \dd x\right)$. Hence, the finite-mark case
applies to $\Psi^{(\varepsilon)}$.
Namely,
\begin{equation}\label{eq:palm_psi_eps}
    (\Psi^{(\varepsilon)})^!_{(\bm x,\bm s)}\mid\bm T\dequal\Psi^{(\varepsilon)}+\sum_{h=1}^{|\bm T|}\Psi^{(\varepsilon)}_{\zeta_{\bm x_h}}, \qquad \prob(\bm{T} = \bm{t}) \propto \prod_{h=1}^{|\bm{t}|} \eta_\varepsilon(\bm{x}_h),
\end{equation}
where 
\[
    \eta_\varepsilon(\bm x_h):=\int_{\Theta\times\R_+}(c_\varepsilon\gamma)^{n_h}\prod_{x_j\in\bm x_h}\kappa(x_j;\theta)\nu(\dd\theta\dd\gamma)=c_\varepsilon^{n_h}\eta(\bm x_h) = c_\varepsilon^k\prod_{h=1}^{|\bm t|}\eta(\bm x_h).
\]
In particular, the term $\prod_{h=1}^{|\bm t|}c_\varepsilon^{n_h} = c_\varepsilon^k$ in the law of $\bm T$ cancels from the normalizing constant of the law of $\bm T$, so that $\prob(\bm{T} = \bm{t}) $ in \eqref{eq:palm_psi_eps} coincides with the one stated in \Cref{thm:red_palm_gensncp_ibp}.
Similarly, one can show that the density of $\zeta_{\bm x_h}$ in \eqref{eq:palm_psi_eps} concides with the one in the statement.
We now identify the reduced Palm distribution of the truncated process with the restriction of the reduced Palm distribution of the full process.
Let $\tilde f$ be any nonnegative measurable function on $A_\varepsilon^k\times \M_{A_\varepsilon}$. Let $\bm z=(z_1,\ldots,z_k)\in(\X\times(0,1])^k$, with $z_j=(u_j,r_j)$  and define $f(\bm z,\nu) := \indicator_{A_\varepsilon^k} (\bm z)\tilde f(\bm z,r_\varepsilon(\nu))$. Since 
$r_\varepsilon(\Psi-\sum_{j=1}^k\delta_{z_j})=r_\varepsilon(\Psi)-\sum_{j=1}^k\delta_{z_j}$ whenever $\bm z\in A_\varepsilon^k$, we have
\[
    \E\int_{(\X \times (0, 1])^k} f\left(\bm z,\Psi-\sum_{j=1}^k\delta_{z_j}\right) \Psi^{(k)}(\dd\bm z)  = \E\int_{A_\varepsilon^k} \tilde f\left(\bm z,r_\varepsilon(\Psi)-\sum_{j=1}^k\delta_{z_j}\right) (\Psi^{(\varepsilon)})^{(k)}(\dd\bm z).
\]
Applying the reduced CLM formula on both sides of the equation above gives
\[
    \int_{A_\varepsilon^k} \E\left[ \tilde f(\bm z,r_\varepsilon(\Psi^!_{\bm z})) \right] M_\Psi^{(k)}(d\bm z) = \int_{A_\varepsilon^k} \E\left[\tilde f(\bm z,(\Psi^{(\varepsilon)})^!_{\bm z}) \right]
    M_{\Psi^{(\varepsilon)}}^{(k)}(d\bm z).
\]
Since $M_{\Psi^{(\varepsilon)}}^{(k)} = M_\Psi^{(k)}{}_{|A_\varepsilon^k}$, uniqueness of the Campbell disintegration yields equality in distribution between $r_\varepsilon(\Psi^!_{\bm z})$ and $(\Psi^{(\varepsilon)})^!_{\bm z}$ (on $(\M_{A_\varepsilon}, \mathcal M_{A_\varepsilon})$) $M_{\Psi^{(\varepsilon)}}^{(k)}\text{-a.e. }\bm z$.

It remains only to remove the restriction. Let us view $r_\varepsilon(\Psi^!_{\bm z})$ as a measure on $\X\times(0,1]$ by extending it by zero outside $A_{\varepsilon_m}$ and noting that  for every Borel set $B\subseteq\X\times(0,1]$ and any measure $\xi$,
\[
    \xi(B)=\lim_{m\to\infty}r_{\varepsilon_m}(\xi)(B).
\]
Since the evaluation maps $\xi\mapsto\xi(B)$ generate the $\sigma$-field $\mathcal{M}_{\X \times (0, 1]}$, the results follows by a monotone-class argument.
\end{proof}

Second, we provide an expression for the disintegration $M_{\Psi}^{(k)}(\dd \bm x \, \dd \bm s) =  \rho^{(k)}(\dd \bm s \mid \bm x) \tilde m^{(k)}_{\xi}(\dd \bm x)$, which is a direct consequence of Theorem S3 in \cite{Ber26_Palm}.

\begin{lemma}\label{lem:cox_disintegration}
Let $\Psi$ be the Cox process in \eqref{def:psi_sncp}, and set $\nu(\dd\theta\,\dd\gamma):=\tilde\rho(\dd\gamma)G_0(\dd\theta)$. 
Let $\mathcal T_k$ be the set of vectors $\bm t=(t_1,\ldots,t_k)\in\N^k$ such that $t_1=1$ and $t_\ell\leq 1+\max_{j<\ell}t_j$, for $\ell=2,\ldots,k$. 
For $\bm t\in\mathcal T_k$, set $|\bm t|:=\max_{\ell\leq k}t_\ell$, $\bm x_h=(x_\ell:t_\ell=h)$ and $n_h=|\bm x_h|$, for $h=1,\ldots,|\bm t|$. Moreover, for any $\bm y=(y_1,\ldots,y_r)\in\X^r$, define
\[
    \eta(\bm y) := \int_{\Theta\times\R_+}\gamma^r\prod_{\ell=1}^r\kappa(y_\ell;\theta)\nu(\dd\theta\,\dd\gamma)
\]
Then, the $k$-th factorial moment measure of $\Psi$ admits the disintegration $M_\Psi^{(k)}(\dd\bm x\,\dd\bm s)=\rho^{(k)}(\dd\bm s\mid \bm x)\tilde m_\xi^{(k)}(\dd\bm x)$,
where
\[
    \rho^{(k)}(\dd\bm s\mid \bm x)=\prod_{\ell=1}^k\rho(\dd s_\ell), \qquad \tilde m_\xi^{(k)}(\dd\bm x)=\left\{\sum_{\bm t\in\mathcal T_k}\prod_{h=1}^{|\bm t|}\eta(\bm x_h)\right\}\prod_{\ell=1}^k\dd x_\ell.
\]
Here $\mathcal P_k$ denotes the set of partitions of $\{1,\ldots,k\}$.
\end{lemma}

\begin{proof}
Let $g:(\X\times(0,1])^k\to\R_+$ be measurable. Then, 
\begin{align*}
    \int g(\bm x,\bm s)M_\Psi^{(k)}(\dd\bm x\,\dd\bm s)& =\E\left[\int g(\bm x,\bm s)\prod_{\ell=1}^k\rho(\dd s_\ell)L_\Lambda(x_\ell)\dd x_\ell\right] \\
    &= \int g(\bm x,\bm s) \E\left[\prod_{\ell=1}^kL_\Lambda(x_\ell)\right]\prod_{\ell=1}^k\rho(\dd s_\ell)\dd x_\ell \\
    &= \int  g(\bm x,\bm s)  \prod_{\ell=1}^k\rho(\dd s_\ell)\dd x_\ell  \sum_{\bm t\in\mathcal T_k}\prod_{h=1}^{|\bm t|} \int_{\Theta\times\R_+}\gamma^{n_h}\prod_{\ell\in B}\kappa(x_\ell;\theta)\nu(\dd\theta\,\dd\gamma) \dd x_\ell
\end{align*}
where the first equality follows from the definition of $\Psi$, the second by Tonelli's theorem, and the third from the moment measure formula for Poisson processes.
The claim follows recognizing the definition of $\eta(\bm x_B)$.
\end{proof}

Now, we provide the proof of \Cref{prop:bayesian_sncp_features_posterior}, exploiting \Cref{thm:red_palm_gensncp_ibp}.\\

\noindent \emph{Proof of \Cref{prop:bayesian_sncp_features_posterior}}. The posterior distribution of $\mu$, given a sample $\bm{Z}$ displaying $k$ features with labels $\bm{x^*} = (x^*_1, \ldots, x^*_k)$ and corresponding vector of  frequency counts $\bm{m}:= (m_1, \ldots, m_k)$, is obtained by specializing \Cref{thm:post} in \Cref{sec:general_bayesian_analysis}, which provides the following distributional equality
\[
    \mu \mid \bm Z \dequal  \sum_{\ell=1}^{k} S^*_{\ell} \delta_{x^*_{\ell}} +  \mu^{\prime},
\]
where $\bm{S^*} := (S^*_1, \ldots, S^*_k)$ is a vector of positive random variables with joint distribution 
       \[
        f_{\bm{S^*}}(\dd \bm s) \propto  \E\left\{e^{\int_{\X \times (0, 1]} n \log(1-t) \Psi^!_{\bm{x^*}, \bm s}(\dd z \, \dd t)}\right\} \prod_{\ell=1}^k s_\ell^{m_\ell} (1 - s_\ell)^{n - m_\ell} \rho^{(k)}(\dd \bm s \mid \bm{x^*}).
       \]
From \Cref{thm:red_palm_gensncp_ibp}, the reduced Palm version $\Psi^!_{\bm{x^*}, \bm s}$ does not depend on $\bm s$, thus the $S^*_{\ell}$'s turn out to be independent random variables with marginal law $f_{S^*_{\ell}}(\dd s) \propto s^{m_\ell} (1 - s)^{n - m_\ell} \rho(\dd s)$, as $\ell=1, \ldots , k$. Conditionally to $\bm{S^*}$, from \Cref{thm:post}, $\mu^\prime$ can be represented as $\mu^\prime = \sum_{j\geq 1} S^\prime_j \delta_{X^\prime_j}$, where $\Psi^\prime  = \sum_{j \geq 1} \delta_{(X^\prime_j, S^\prime_j)}$ is characterized by the Laplace functional 
\begin{equation}\label{eq:laplace_psi_prime}
    \mathcal{L}_{\Psi^\prime\mid \bm s^*}(g) = \frac{\mathcal{L}_{\Psi^!_{\bm{x^*}, \bm s^*}} ( g(x,s) - n \log(1-s))}{\mathcal{L}_{\Psi^!_{\bm{x^*}, \bm s^*}} ( - n \log(1-s))},
\end{equation}
for any measurable function $g:\X\times (0,1] \to \R$. As already observed from \Cref{thm:red_palm_gensncp_ibp}, none of the terms depend on $\bm s^*$. Using the expression of the reduced Palm version $\Psi^!_{\bm{x^*}, \bm s^*}$ provided by \Cref{thm:red_palm_gensncp_ibp} in \eqref{eq:laplace_psi_prime}, the thesis follows.\\

\noindent \emph{Proof of \Cref{prop:sncp_Cmn}}.
From \Cref{prop:bayesian_sncp_features_posterior}, a new cluster appears if a point of $\Lambda^{(0)}$, say $(\theta_0, \gamma_0)$, generates a process $\Psi^{(0)}_{(\theta_0, \gamma_0)} = \sum_{j \geq 1}  \delta_{(X^\prime_j, S^\prime_j)} \mid (\theta_0, \gamma_0) \sim \mathrm{PP}(\gamma_0 \rho_n(\dd s)  \kappa(x; \theta_0) \dd x)$ for which at least one of the $X^\prime_j$'s appears in (at least one of) the observations $Z_{n+1}, \ldots, Z_{n+m}$. Specifically, the probability that feature $X^\prime_j$ is picked by at least one of the $m$ next observations is $1 - (1 - S^\prime_j)^m$. Consequently, the process collecting those features picked by at least one of the $m$ next observations, conditionally to $(\theta_0, \gamma_0)$, is obtained by independently thinning $\Psi^{(0)}_{(\theta_0, \gamma_0)}$ with retention probability $p(x, s) = 1 - (1 -s)^m$. By the properties of the Poisson process, such a process follows a Poisson process with intensity $\gamma_0 (1 - (1 -s)^m) \rho_n(\dd s) \kappa(x; \theta_0) \dd x$, conditionally to $(\theta_0, \gamma_0)$. Therefore, the point $(\theta_0, \gamma_0)$ gives rise to a new cluster if the such a Poisson process has at least one point, which happens with probability equal to $q(\gamma_0) := 1 - \exp(-\gamma_0 (\varphi_{n+m}- \varphi_{n}))$.
Finally, the process collecting the points $(\theta_0, \gamma_0)$ of $\Lambda^{(0)}$ that generate a new cluster in the next $m$ observations is obtained by independently thinning $\Lambda^{(0)}$ with retention probability $p(\theta,\gamma) = q(\gamma)$. That is, such a process is Poisson with intensity $q(\gamma) e^{-\gamma \varphi_n} \tilde{\rho}(\dd \gamma) G_0(\dd \theta)$, which has a Poisson number of points with parameter $\int (1 - e^{-\gamma (\varphi_{n+m}- \varphi_{n})}) e^{-\gamma \varphi_n} \tilde{\rho}(\dd \gamma)$.

\subsection{Additional theoretical results}\label{app:add_res_cox_model}

First, we provide the marginal distribution of the sample $\bm Z$ under the Cox model prior in \eqref{def:psi_sncp}.

\begin{proposition}\label{prop:bayesian_sncp_features_marginal}
Let $\bm Z$ be a sample from \eqref{eq:representation_theorem}, where $\mu$ is the functional of the process $\Psi$ described in \eqref{def:psi_sncp}. Set
\[
    \varphi_n:=\int_{(0,1]}\{1-(1-s)^n\}\rho(\dd s).
\]    
Then, the marginal distribution of the sample $\bm Z$ is
\begin{equation*}
\begin{aligned}
&\exp\left\{-\int_{\Theta\times\R_+}(1-e^{-\gamma\varphi_n})\tilde\rho(\dd\gamma)G_0(\dd\theta)\right\} \times\prod_{\ell=1}^k\int_{(0,1]}s^{m_\ell}(1-s)^{n-m_\ell}\rho(\dd s)\\
&\qquad\times\sum_{\bm t\in\mathcal T_k}\prod_{h=1}^{|\bm t|}\int_{\Theta\times\R_+}e^{-\gamma\varphi_n}\gamma^{n_h}\left\{\prod_{\ell:t_\ell=h}\kappa(x_\ell^*;\theta)\right\}\tilde\rho(\dd\gamma)G_0(\dd\theta),
\end{aligned}
\end{equation*}
where $\mathcal T_k$ is the set of canonical allocation vectors $\bm t=(t_1,\ldots,t_k)$ describing partitions of $\{1,\ldots,k\}$ as in \Cref{lem:cox_disintegration}, and for $\bm t\in\mathcal T_k$, we write $|\bm t|=\max_{\ell\leq k}t_\ell$, $\bm x_h^*=(x_\ell^*:t_\ell=h)$ and $n_h=|\bm x_h^*|$.
\end{proposition}

\begin{proof}
By Theorem \ref{thm:marg}, the marginal distribution of $\bm Z$ is obtained from
\begin{equation*}
\int_{(0,1]^k}\E\left\{\exp\left(\int_{\X\times(0,1]}n\log(1-s)\Psi^!_{\bm x^*,\bm s}(\dd x\,\dd s)\right)\right\}\prod_{\ell=1}^k s_\ell^{m_\ell}(1-s_\ell)^{n-m_\ell}\rho^{(k)}(\dd\bm s\mid\bm x^*)\tilde m_\xi^{(k)}(\dd\bm x^*).
\end{equation*}
By \Cref{lem:cox_disintegration},
\[
    \rho^{(k)}(\dd\bm s\mid\bm x^*)=\prod_{\ell=1}^k\rho(\dd s_\ell), \quad \text{and} \quad \tilde m_\xi^{(k)}(\dd\bm x^*)=\left\{\sum_{\bm t\in\mathcal T_k}\prod_{h=1}^{|\bm t|}\eta(\bm x_h^*)\right\}\prod_{\ell=1}^k\dd x_\ell^*.
\]     
Thus, it remains to compute the term
\[
    \E\left\{\exp\left(\int_{\X\times(0,1]}n\log(1-s)\Psi^!_{\bm x^*,\bm s}(\dd x\,\dd s)\right)\right\}
\]
For ease of notation, set $H_n(\xi):=\exp\left\{\int_{\X\times(0,1]}n\log(1-s)\xi(\dd x\,\dd s)\right\}$.
From \Cref{thm:red_palm_gensncp_ibp}, 
\[
    \E[H_n (\Psi^!_{\bm x^*,\bm s})] = \E\left[H_n(\Psi) + \sum_{h=1}^{|\bm T|}H_n(\Psi_{\zeta_{\bm x_h^*}}) \right]
\]  
First, it is clear that $\E[\Psi_{\zeta_{{\bm{x}}_h}} \mid \zeta_{{\bm{x}}_h} = (\theta, \gamma)] = e^{-\gamma\varphi_n}$.
Second, by the Cox construction,
\begin{align*}
    \E[H_n(\Psi)] &= \E\left[\E\{H_n(\Psi)\mid\Lambda\}\right] \\
    &= \E\left[\exp\left\{-\varphi_n\int_{\Theta\times\R_+}\gamma\Lambda(\dd\theta\,\dd\gamma)\right\}\right]\\
    &= \exp\left\{-\int_{\Theta\times\R_+}(1-e^{-\gamma\varphi_n})\nu(\dd\theta\,\dd\gamma)\right\}.
\end{align*}   
Therefore, putting things together,
\begin{multline*}
\E\{H_n(\Psi^!_{\bm x^*,\bm s})\}  \tilde m_\xi^{(k)}(\dd\bm x^*) = 
\E\{H_n(\Psi^!_{\bm x^*,\bm s})\} \sum_{\bm t\in\mathcal T_k}\prod_{h=1}^{|\bm t|}\eta(\bm x_h^*)  \prod_{\ell=1}^k\dd x_\ell^* \\
=\exp\left\{-\int_{\Theta\times\R_+}(1-e^{-\gamma\varphi_n})\nu(\dd\theta\,\dd\gamma)\right\}\sum_{\bm t\in\mathcal T_k}\prod_{h=1}^{|\bm t|}\int_{\Theta\times\R_+}e^{-\gamma\varphi_n}\gamma^{n_h}\left\{\prod_{\ell:t_\ell=h}\kappa(x_\ell^*;\theta)\right\}\nu(\dd\theta\,\dd\gamma) \prod_{\ell=1}^k\dd x_\ell^*,
\end{multline*}
and the thesis follows by multiplying both sides for  $\prod_{\ell=1}^k\int_{(0,1]}s^{m_\ell}(1-s)^{n-m_\ell}\rho(\dd s)$.   
\end{proof}

Second, we focus on the $m$-step ahead prediction. In particular,
let $K_m^{(n)}$ be the number of new features discovered in a future sample of size $m$.

\begin{proposition}\label{prop:sncp_Kmn}
Conditional on $\bm T$, $\Gamma^{(0)} = \int \gamma \Lambda^{(0)}(\dd \theta\, \dd \gamma)$, and $(\theta_{\bm x^*_h}, \gamma_{\bm x^*_h}: h = 1,\ldots, C_n )$, the law of $K_m^{(n)}$ follows
     \[
K_m^{(n)} \mid \bm T, \Gamma^{(0)},(\theta_{\bm x^*_h}, \gamma_{\bm x^*_h}: h = 1,\ldots, C_n ) \sim \mathrm{Poisson}\!\left((\varphi_{n+m} - \varphi_n )\left( \Gamma^{(0)} +\sum_{h = 1}^{C_n}\gamma_{\bm x^*_h}\right)\right).
\]
\end{proposition}
\begin{proof}
Note that, conditionally to $\bm T$, $\Lambda^{(0)}$, and $(\theta_{\bm x^*_h}, \gamma_{\bm x^*_h}: h = 1,\ldots, C_n )$, the process $\Psi^\prime$ in \eqref{eq:joint_psi_prime_T} is a Poisson point process with mean measure 
\[
    \rho_n(\dd s) \times \left[ L^{(0)}(x) + \sum_{h=1}^{C_n} \gamma_{\bm x^*_h} k(x; \theta_{\bm x^*_h}) \right]\dd x.
\]
The process describing the new features appearing in the future sample of size $m$ is obtained by retaining each point in $\Psi^\prime$ independently with probability $1- (1-S^\prime_j)^m$. This results in a number of new features $K_m^{(n)}$ distributed as in statement of the thesis.   
\end{proof}

\subsection{Posterior sampling algorithm}\label{app:mcmc_sncp}

Recall the shot-noise representation $L_\Lambda (x)=\sum_{h\ge 1}\gamma_h \kappa(x; \theta_h)$ and the Cox process $\Psi=\sum_{j\geq 1} \delta_{(X_j,S_j)}$. Following \Cref{prop:bayesian_sncp_features_posterior}, the posterior introduces:
\begin{itemize}
\item feature-specific occurrence probabilities $S^*_\ell$ for the $k$ observed features $\bm x^* = (x^*_1,\ldots,x^*_k)$;
\item a clustering of the $k$ observed features into $C_n$ clusters, encoded by the vector of latent allocation variables
$\bm{T} =(T_1,\dots,T_k)$, with $T_\ell\in\{1,\dots,C_n\}$. Moreover, features block $\bm x_h^* = \{ x_\ell^*: T_\ell = h\}$, with size $n_h=|\bm x_h^*|$, collects the features allocated to cluster $h$, for $h=1,\ldots,C_n$; 
\item for each cluster $h = 1,\ldots,C_n$, a parent shot-noise parameter $\zeta_{\bm x_h^*}=(\theta_{\bm x_h^*},\gamma_{\bm x_h^*})$, where $\theta_{\bm x_h^*}$ indexes the kernel $\kappa(\cdot;\theta_{\bm x_h^*})$ and $\gamma_{\bm x_h^*}>0$ is the cluster amplitude.
\end{itemize}

Specific to the \textsc{tcga} application, we consider the \emph{extended Gaussian-\textsc{ibp}} described in \Cref{sec:ext_gaussian_ibp}, which takes $\theta=(\mu,\sigma^2)\in\mathbb{R}^d\times\mathbb{R}_+$ and $\kappa(\cdot; \theta)=\mathcal{N}(\cdot; \mu,\sigma^2 I_d)$, with $d=5$.
A key scalar quantity entering the posterior is
\begin{equation*}
\varphi_n=\int_{(0,1]}\bigl(1-(1-s)^n\bigr)\rho(s)\,ds,
\qquad
\rho(s)=c\,s^{-1-\alpha}(1-s)^{\beta+\alpha-1}\indicator_{(0,1]}(s),
\end{equation*}
which can be evaluated stably via the finite expansion
\begin{equation}\label{eq:phi-n-series}
\varphi_n
=c\sum_{i=1}^n(-1)^{i+1}\binom{n}{i}\,B(i-\alpha,\beta+\alpha),
\end{equation}
where $B(\cdot,\cdot)$ denotes the Beta function. The quantity $\varphi_n$ depends on the hyperparameters $(\alpha,\beta, c)$ of the model.
In the following, we describe the Gibbs scheme for posterior sampling from the \emph{extended Gaussian-\textsc{ibp}} model described in \Cref{sec:ext_gaussian_ibp}. 
Remind that, at each retained iteration, we obtain posterior draws of predictive quantities using \Cref{prop:sncp_Kmn} and \Cref{prop:sncp_Cmn}. In particular, in \Cref{prop:sncp_Kmn} under the assumed model, the total mass $\Gamma^{(0)}=\int \gamma\,\Lambda^{(0)} (\dd \theta \dd \gamma)$ has conditional distribution $\Gamma^{(0)}\mid \tau_0,b_0,\varphi_n \sim \mathrm{Gamma}(\tau_0,b_0+\varphi_n)$.

\subsubsection{Gibbs updates for the main parameters}\label{app:gibbs_main_params}

For notational convenience, let $\varepsilon := (\alpha,\beta,c,\tau_0,b_0)$ collect the hyperparameters of the model. Here, we report the Gibbs steps for updating the main parameters of the model conditionally to such hyperparameters.\\

\noindent\emph{Update of the observed feature probabilities $\bm S^*$.}
From \Cref{prop:bayesian_sncp_features_posterior} and the form of $\rho$, the full-conditionals factorize as
\begin{equation}\label{eq:q-fullcond}
S^*_{\ell}\mid \bm Z,\varepsilon \ind \mathrm{Beta}(m_\ell-\alpha,n-m_\ell+\beta+\alpha),
\qquad \ell=1,\dots,k.
\end{equation}
This update does not depend neither on the clustering $\bm T$ nor on the cluster-specific parameters $\zeta_{\bm x_h^*}=(\theta_{\bm x_h^*},\gamma_{\bm x_h^*})$, for $h=1,\ldots,C_n$.\\

\noindent\emph{Update of the clustering $\bm T$ (cluster allocations).}
We use a collapsed Gibbs scan over $\ell=1,\dots,k$. Let ${\bm T}_{-\ell}$ denote the allocations with feature $\ell$ removed. For any existing cluster $h$, let $\bm x_{h,-\ell}^*$ be the set of feature labels belonging to the cluster (excluding feature $\ell$), with size $n_{h,-\ell}$.
The conditional probability of assigning feature $\ell$ to an existing cluster $h$ is proportional to the ratio of cluster marginal likelihoods induced by \eqref{eq:distribution_T}:
\begin{equation}\label{eq:assign-existing}
\Pr(T_\ell=h\mid {\bm T}_{-\ell},x^*_\ell,\varepsilon)\propto
n_{h,-\ell} \,\,
p(x^*_{\ell}\mid \bm x_{h,-\ell}^*),
\end{equation}
where $p(x^*_{\ell}\mid \bm x_{h,-\ell}^*)=\int \kappa(x^*_{\ell};\theta)\,\pi(\dd \theta\mid \bm x_{h,-\ell}^*)$ is the posterior predictive density of $x^*_\ell$ under the base prior $G_0$ updated with the feature labels $\bm x_{h,-\ell}^*$ in cluster $h$.
The probability of creating a new cluster is
\begin{equation}\label{eq:assign-new}
\Pr(T_\ell=\text{new}\mid {\bm T}_{-\ell},x^*_\ell,\varepsilon)\propto
\tau_0 \,\,p_0(x^*_{\ell}),
\end{equation}
where $p_0(x^*_\ell)=\int \kappa(x^*_\ell;\theta)\,G_0(\dd \theta)$ is the prior predictive density of $x^*_\ell$.
After normalization across all existing clusters and the new-cluster option, we sample $T_\ell$ accordingly and relabel clusters to keep labels contiguous.\\

\noindent\emph{Update of cluster-specific parameters $\zeta_{\bm x_h^*}$.}
Conditionally to the clustering $\bm T$ (and \emph{rest}), cluster-specific parameters are independent. In particular, each $\zeta_{\bm x_h^*}$ is updated independently following the two following steps.

\begin{itemize}
    \item \emph{Update of $\gamma_{\bm x_h^*}$.}
Under the Gamma process prior for $\Lambda$, i.e.,  $\tilde{\rho}(\dd \gamma)=\tau_0 \gamma^{-1}e^{-b_0\gamma} \indicator_{\R_+}(\gamma) \dd \gamma$, the full-conditional for $\gamma_{\bm x_h^*}$ is
\begin{equation}\label{eq:gamma-fullcond}
\gamma_{\bm x_h^*}\mid \bm T,\bm Z,\varepsilon \sim \mathrm{Gamma}\bigl(n_h,b_0+\varphi_n\bigr),
\qquad h=1,\dots,C_n,
\end{equation}
where $\mathrm{Gamma}(a,b)$ denotes a Gamma distribution with shape $a$ and rate $b$.

\item \emph{Update of $\theta_{\bm x_h^*}=(\mu_{\bm x_h^*},\sigma_{\bm x_h^*}^2)$.}
Since $\kappa(\cdot;\theta)$ is Gaussian and $G_0$ normal-inverse-gamma, the updates are conjugate. Let $\bar{\bm x}_h^*$ be the empirical mean of feature labels in $\bm x_h^*$ and let 
\[
V_h=\sum_{x^*_\ell\in \bm x_h^*}\lVert x^*_{\ell}-\bar{\bm x}_h^*\rVert^2,
\]
where $\lVert \cdot \rVert$ denotes the Euclidean norm in $\R^d$. Then, the updates follow
\begin{equation}\label{eq:tau-fullcond}
\mu_{\bm x_h^*}\mid \sigma_{\bm x_h^*}^2,\bm T,\bm Z,\varepsilon \sim \mathcal{N}\Bigl(m_{\bm x_h^*},\sigma_{\bm x_h^*}^2(\lambda_{\bm x_h^*}^{-1})I_d\Bigr), \quad
\sigma_{\bm x_h^*}^2\mid \bm T,\bm Z,\varepsilon \sim \mathrm{Inv-Gamma}(a_{\bm x_h^*},b_{\bm x_h^*}),
\end{equation}
with 
\begin{align*}
\lambda_{\bm x_h^*}&=\lambda_0+n_h,
&
m_{\bm x_h^*}&=\frac{\lambda_0 m_0+n_h\bar{\bm x}_h^*}{\lambda_{\bm x_h^*}},
\\
a_{\bm x_h^*}&=a+\frac{n_h d}{2},
&
b_{\bm x_h^*}&=b+\frac{1}{2}\Bigl(V_h+\frac{\lambda_0 n_h}{\lambda_{\bm x_h^*}}\lVert \bar{\bm x}_h^*-m_0\rVert^2\Bigr).
\end{align*}

\end{itemize}

\subsubsection{Gibbs updates for global hyperparameters}

As discussed in \Cref{sec:ext_gaussian_ibp}, we put hyperpriors on $\varepsilon = (\alpha,\beta,c,\tau_0,b_0)$ in order to let the model adapt to the observed data. In particular, consider the hyperpriors detailed in \Cref{sec:sncp_genomics} for the \textsc{tcga} application. 
Here, we report the updating step for $\varepsilon$, conditionally to $\bm Z$. We denote the prior distribution on $\varepsilon$ by $p(\varepsilon)$. Therefore, we aim at drawing $\varepsilon$ from 
\begin{equation}\label{eq:post_hyper}
    p(\varepsilon\mid \bm Z) \propto p(\bm Z\mid \varepsilon) \, p(\varepsilon),
\end{equation}
where $p(\bm Z\mid \varepsilon)$ denotes the marginal distribution of $\bm Z$, conditionally to the hyperparameters $\varepsilon$, which is given in \Cref{prop:bayesian_sncp_features_marginal}. 
\\

From \Cref{prop:bayesian_sncp_features_marginal}, it holds
\[
p(\bm Z\mid \varepsilon) \propto e^{-A_n(\varepsilon)} \left\{ \prod_{\ell=1}^k q_\ell(\varepsilon) \right\}
\sum_{\bm t\in\mathcal{T}_k} \prod_{h=1}^{|\bm t|} w_{\bm x^*_h}(\varepsilon),
\]
where $\propto$ is up to constant not depending on $\varepsilon$, and 
\[
A_n(\varepsilon) = \int_{\R_+} \{1-e^{-\gamma\varphi_n(\varepsilon)} \} \tilde{\rho}(\dd \gamma), \qquad 
q_\ell(\varepsilon) = \int_{(0,1]} s^{m_\ell}(1-s)^{n-m_\ell}\rho(\dd s),
\]
and
\[
r_{\bm x^*_h}(\varepsilon)
=
\int
\prod_{\ell: t_\ell = h }
\kappa(x_\ell^*;\theta)
G_0(\dd\theta), \qquad w_{\bm x^*_h}(\varepsilon)
=
r_{\bm x^*_h}(\varepsilon)
\int_{\R_+}
e^{-\gamma\varphi_n(\varepsilon)}
\gamma^{n_h}
\tilde{\rho}(\dd\gamma).
\]
The expression of $p(\bm Z\mid \varepsilon)$ suggests to consider the augmented probability distribution
\[
p_{\mathrm{aug}}(\bm Z, \bm t \mid \varepsilon)
\propto
e^{-A_n(\varepsilon)}
\left\{
\prod_{\ell=1}^k q_\ell(\varepsilon)
\right\}
\prod_{h=1}^{|\bm t|}
w_{\bm x^*_h}(\varepsilon),
\qquad
 \bm t\in\mathcal{T}_k,
\]
where $\propto$ is up to constant not depending on $\varepsilon$.

Now, instead of directly targeting the posterior $p(\varepsilon\mid \bm Z)$ in \eqref{eq:post_hyper}, we target the augmented posterior
\begin{equation}\label{eq:post_hyper_aug}
 p(\varepsilon,\bm t \mid \bm Z)
\propto
p_{\mathrm{aug}}(\bm Z, \bm t \mid \varepsilon)\, p(\varepsilon),   
\end{equation}
whose marginal distribution in $\varepsilon$ coincides with the desired posterior distribution in \eqref{eq:post_hyper}.
The \textsc{mcmc} update then requires the sampling from $p(\varepsilon,\bm t \mid \bm Z)$ in \eqref{eq:post_hyper_aug}, which is then performed alternating two steps: (i) sampling from $p(\bm t \mid \varepsilon, \bm Z)$, (ii) sampling from $p(\varepsilon\mid \bm t, \bm Z)$. Specifically, we have that
\begin{equation}\label{eq:post_t_givenhyper}
  p(\bm t \mid \varepsilon, \bm Z) \propto 
\prod_{h=1}^{C_n}
w_{\bm x^*_h}(\varepsilon),  
\end{equation}
and 
\begin{equation}\label{eq:post_hyper_givenT}
    p(\varepsilon\mid \bm t, \bm Z) \propto e^{-A_n(\varepsilon)}
\left\{
\prod_{\ell=1}^k q_\ell(\varepsilon)
\right\}
\prod_{h=1}^{C_n}
w_{\bm x^*_h}(\varepsilon) \times p(\epsilon).
\end{equation}
Notably, the draw from \eqref{eq:post_t_givenhyper} does not need to be actually performed, since the sampled vector of latent allocation variables $\bm T$ in \eqref{app:gibbs_main_params} is drawn from the same distribution, so we condition to that value $\bm T$ when sampling in \eqref{eq:post_hyper_givenT}.\\

Specializing the distribution \eqref{eq:post_hyper_givenT} to the \emph{extended Gaussian-\textsc{ibp}} model, the involved quantities write as follows:
\[
A_n(\varepsilon)
=
\tau_0
\log\left(\frac{b_0+\varphi_n(\varepsilon)}{b_0}\right), \qquad q_\ell(\varepsilon)
=
c\,B(m_\ell-\alpha,n-m_\ell+\beta+\alpha),
\]
and $r_{\bm x^*_h}(\varepsilon)$ does not depend on $\varepsilon$, so that
\[
w_{\bm x^*_h}(\varepsilon)
=
r_{\bm x^*_h}
\int_0^\infty
e^{-\gamma\varphi_n(\varepsilon)}
\gamma^{n_h}
\tau_0\gamma^{-1}e^{-b_0\gamma}\dd\gamma
=
r_{\bm x^*_h}\,
\tau_0
\frac{\Gamma(n_h)}
{\{b_0+\varphi_n(\varepsilon)\}^{n_h}}.
\]
Therefore, the update in \eqref{eq:post_hyper_givenT} specializes to
\begin{equation*}
\begin{aligned}
    p(\varepsilon\mid \bm t, \bm Z) &\propto \exp \left\{- \tau_0
\log\left(\frac{b_0+\varphi_n(\varepsilon)}{b_0}\right) \right\}\,  c^k  \prod_{\ell=1}^k B(m_\ell-\alpha,n-m_\ell+\beta+\alpha) \\
&\quad \times {\tau_0}^{C_n} \{b_0+\varphi_n(\varepsilon)\}^{-k}   \times p(\varepsilon)  
\end{aligned}
\end{equation*}
since $\prod_{h=1}^{C_n} \{ r_{\bm x^*_h} \Gamma(n_h) \}$ does not depend on $\varepsilon$. In particular, the hyperparameters in $\varepsilon$ are updated sequentially through the Gibbs sampling approach. Under the prior $\tau_0 \sim \mathrm{Gamma}(a_{\tau_0},b_{\tau_0})$, then the full-conditional of $\tau_0$ is
\begin{equation}\label{eq:theta0-fullcond}
\tau_0\mid \bm t,\bm Z, \alpha, \beta, c, b_0 \sim \mathrm{Gamma}\left(a_{\tau_0}+C_n,b_{\tau_0}+ \log\left(\frac{b_0+\varphi_n(\varepsilon)}{b_0}\right) \right).
\end{equation} 
The remaining hyperparameters $(b_0,\alpha,\beta,c)$ are updated by Metropolis-Hastings steps.

\section{Proofs and additional details of \Cref{sec:dpp_model_and_application}}\label{app:dpp_prior_model}

\subsection{The independently marked (repulsive) determinantal process prior: proof of \Cref{thm:bayesian_dpp} and details of \Cref{cor:dpp_beta}}

\noindent \emph{Proof of point (i) of \Cref{thm:bayesian_dpp}}.
The marginal distribution of $\bm Z$ is recovered from \Cref{thm:marg} as follows. 
Since $\Psi$ is an independently marked process with ground process $\xi$ and mark kernel $H$, then from \cite[Proposition 3.2.14]{BaBlaKa}, the reduced Palm version $\Psi^!_{x,s}$ is still an independently marked process, with ground process $\xi^!_x$ and mark kernel $H$, thus it does not depend on $s$.
By Palm algebra, we can extend this property by claiming that $\Psi^!_{\bm x,\bm s}$ is an independently marked process, with ground process $\xi^!_{\bm x}$ and mark kernel $H$. 
Then,
\begin{equation}\label{eq:ev_marginal_dpp}
   \begin{aligned}
    \E\left\{ e^{ \int_{\X \times (0, 1]} n \log(1 - t) \Psi^!_{\bm x^*, \bm s}(\dd z \, \dd t)} \right\} &= \mathcal{L}_{\Psi^!_{\bm x^*, \bm s}}(-n\log(1-t))\\
    &= \mathcal{L}_{\xi^!_{\bm x^*}} \left[ -\log\left\{\int_{(0,1]} (1-t)^n H(\dd t\mid x) \right\} \right],
\end{aligned} 
\end{equation}
where the second equality follows from \cite[Proposition 2.2.20]{BaBlaKa}. The thesis in point (i) of the statement follows.\\

\noindent \emph{Proof of point (ii) of \Cref{thm:bayesian_dpp}}.
For the posterior distribution of $\mu$, expressed in \Cref{thm:post}, we need to determine the law of the vector $\bm S^* = (S^*_1,\ldots,S^*_k)$ and the law of $\mu^\prime$, conditionally to $\bm S^*$. From point (i) of \Cref{thm:post} and \eqref{eq:ev_marginal_dpp}, which does not depend on $\bm s$, the $S^*_\ell$'s are independent with marginal laws $f_{S^*_\ell}(\dd s) \propto s^{m_\ell}(1 - s)^{n-m_\ell} H(\dd s\mid x^*_\ell)$. Moreover, by point (ii) in \Cref{thm:post}, the law of $\Psi^\prime$, conditionally to $\bm S^*$, has Laplace functional given by
\begin{equation*}
    \begin{aligned}         \mathcal{L}_{\Psi^\prime\mid \bm S^* = \bm s^*}(g) &= \frac{\mathcal{L}_{\Psi^!_{\bm x^*, \bm s^*}}(g(x,s) - n\log(1-s))}{ \mathcal{L}_{\Psi^!_{\bm x^*, \bm s^*}}(- n\log(1-s))}\\
         &= \frac{\mathcal{L}_{\xi^!_{\bm x^*}} \left[ -\log\left\{\int_{(0,1]} e^{-g(x,s)} (1-s)^n H(\dd s\mid x) \right\} \right]}{\mathcal{L}_{\xi^!_{\bm x^*}} \left[ -\log\left\{\int_{(0,1]} (1-s)^n H(\dd s\mid x) \right\} \right]}.
    \end{aligned}
\end{equation*}
By \cite[Proposition 2.2.20]{BaBlaKa}, this equals the Laplace transform of the independently marked point process $\Psi^\prime = \sum_{j \geq 1} \delta_{(X^\prime_j, S^\prime_j)}$ where $S^\prime_j \mid X^\prime_j = x^\prime_j \sim H^\prime(\cdot \mid x^\prime_j) \propto (1 -s)^n H(\dd s \mid x^\prime_j)$ and $\xi^\prime = \sum_{j\geq 1} \delta_{X^\prime_j}$ has Laplace transform as in the statement of \Cref{thm:bayesian_dpp}.\\

\noindent \emph{Details of \Cref{cor:dpp_beta}}.
The results in \Cref{cor:dpp_beta} are obtained by specializing the treatment to the case where the mark kernel $H(\cdot\mid x )$ corresponds to the law of a beta with parameters $(a,b)$. Under this assumption, we get that the law of $M^\prime$ can be determined as follows
\begin{align*}
    \mathcal L_{M^\prime}(u) &= \E\left\{e^{-u \xi^\prime(\X)}\right\} =\mathcal L_{\xi^\prime}(u \indicator_{\X} ) \\
    & = \frac{\mathcal{L}_{\xi^!_{\bm x^*}} \left[ u -\log\left\{\int_{(0,1]}  (1-s)^n \mathrm{Beta}(\dd s \,; a, b) \right\} \right]}{\mathcal{L}_{\xi^!_{\bm x^*}} \left[ -\log\left\{\int_{(0,1]} (1-s)^n \mathrm{Beta}(\dd s \,; a, b) \right\} \right]} \\
    & = \frac{\mathcal{L}_{\xi^!_{\bm x^*}}  \left[ u - \log\left\{B(a, b+n) / B(a, b)\right\}\right]}{\mathcal{L}_{\xi^!_{\bm x^*}}  \left[ - \log\left\{ B(a, b+n) / B(a, b)\right\}\right]}.
\end{align*}
Moreover, we focus on the mean measure of $\xi^\prime$, denoted with $M_{\xi^\prime}$. Indicating with $f_{\xi^\prime}$ the density of the law of $\xi^\prime$ with respect to the law of $\xi^!_{\bm x^*}$, we have
\[
M_{\xi^\prime} (A) = \E\left\{f_{\xi^\prime}(\xi^!_{\bm x^*}) \xi^!_{\bm x^*}(A) \right\} = \E\left\{ \int_{\X} \indicator_A(y) f_{\xi^\prime}(\xi^!_{\bm x^*}) \xi^!_{\bm x^*}(\dd y) \right\}.
\]
By applying the alternative statement of the \textsc{clm} formula in \Cref{lem:clm} in terms of the Palm distributions, we obtain that the mean measure of $\xi^\prime$ equals
\begin{align*}
M_{\xi^\prime} (A) &= \int_{\X} \E \left[ \indicator_A(y) f_{\xi^\prime}\left\{\left(\xi^!_{\bm x^*} \right)_y \right\}  \right] M_{\xi^!_{\bm x^*}}(\dd y) \\
&= \int_{\X} \E \left[ \indicator_A(y) f_{\xi^\prime}\left\{\left(\xi^!_{\bm x^*} \right)^!_y + \delta_y \right\}  \right] M_{\xi^!_{\bm x^*}}(\dd y) \\
&= \int_{\X} \E \left[ \indicator_A(y) f_{\xi^\prime}\left\{\xi^!_{(\bm x^*, y)} + \delta_y \right\}  \right] M_{\xi^!_{\bm x^*}}(\dd y) \\
 &= g(n; a, b) \int_{A} \E \left[ f_{\xi^\prime}\left\{ \xi^!_{(\bm x^*, y)} \right\} \right] M_{\xi^!_{\bm x^*}}(\dd y).
\end{align*}

\subsection{Fitting details and numerical implementation} \label{app:fitting_dpp_model}

As mentioned in \Cref{sec:spruces}, the two main tasks in this setting, i.e., predicting the number and locations of the missing trees, require to handle the distribution of $\xi^!_{\bm x}(\X)$, for some specific points $\bm x$. Here, we go into the details of this point, and related computational aspects.
From \Cref{thm:bayesian_dpp}, $\xi^!_{\bm x}$ is a \textsc{dpp} on $R$ with kernel $ K_{\bm x}$, and associated Mercer decomposition $K_{\bm x}(y_1, y_2) = \sum_{k \geq 1} \lambda^*_k \varphi^*_k(y_1) \overline{\varphi^*_k(y_2)}$. 
Then, from \cite{Hou(06)} \cite[see also][]{Lav(15)}, we have that $\xi^!_{\bm x}(\X)$ follows a Poisson-binomial distribution with parameters  $(\lambda^*_k)_{k \geq 1}$.
Unfortunately, as discussed in \cite{LavRub23}, the eigendecomposition of $ K_{\bm x}$ is generally not analytically available. We consider two approaches: the first consists of approximating the eigendecomposition numerically, and the second exploits an approximation of the Poisson-binomial distribution that does not require such an eigendecomposition.

To numerically approximate the $\lambda^*_k$'s, we proceed as follows.
Let $R_g$ be a grid of $(N_g)^2$ equispaced points in $R$, define  the $N_g^2 \times N_g^2$ matrix $\bm{\tilde K}$ by $\bm{\tilde K}_{i, j} = K_{\bm x}(y_{1,i}, y_{2,j})$ for $(y_{1,i}, y_{2,j}) \in R_g \times R_g$. Let $\Delta = \prod_{i=1}^2 L_i / N_g$ where $L_i$ is the length of the $i$-th side of $R$. 
Of course, $\bm{\tilde K}$ is positive definite since $K_{\bm x}$ is a covariance kernel, with eigenvalues $\tilde \lambda_1, \ldots, \tilde \lambda_{N^2_g}$. 
Then, we set $\lambda^*_k \approx \tilde \lambda_k \Delta$ for $k=1, \ldots, N^2_g$ and $\lambda^*_k = 0$ for $k >N^2_g$, and approximate the law of $\xi^!_{\bm x}(\X)$ with a Poisson-binomial distribution of parameters $(\tilde \lambda_1 \Delta, \ldots,  \tilde \lambda_{N^2_g} \Delta)$. 
In our experiments presented in the subsequent sections, the decreasingly ordered sequence of $ \lambda^*_k$ decreases extremely fast with $k$ so that the truncation error is negligible. 
We checked the accuracy of the numerical approximation of $\lambda^*_k$ against the analytical value in the case of a Gaussian covariance, reporting errors of the order of $0.01$ for the largest $50$ eigenvalues.
To compute the probability mass function of a Poisson-binomial distribution, we use the Python package \texttt{fast-poibin}. Evaluating the distribution on the whole support takes less than one second for $N^2_g = 2500$. Further speed-ups can be achieved by truncating the series of eigenvalues earlier, for example, by keeping only those eigenvalues exceeding a pre-specified threshold. 

An alternative strategy would focus on approximating the Poisson-binomial distribution via Le Cam's theorem \citep{steele1994cam}, i.e., approximating the law of $\xi^!_{\bm x}(\X)$ with a Poisson distribution with parameter $\sum_{k \geq 1} \lambda^*_k$. Since the sum of eigenvalues is equal to the trace of $K_{\bm x}$, i.e., $\int_{R} K_{\bm x}(y, y) \dd y$, using Le Cam's approximation does not require performing any numerical eigendecomposition. 
However, Le Cam's approximation introduces a non-negligible error. In particular, the total variation between the true distribution of $\xi^!_{\bm x}(\X)$ and its approximation is bounded by above by $\sum_{k\geq1} (\lambda^*_k)^2$.

\subsection{Synthetic scenarios}\label{app:synthetic_dpp_model}

\begin{figure}[ht]
    \centering
    \includegraphics[width=\linewidth]{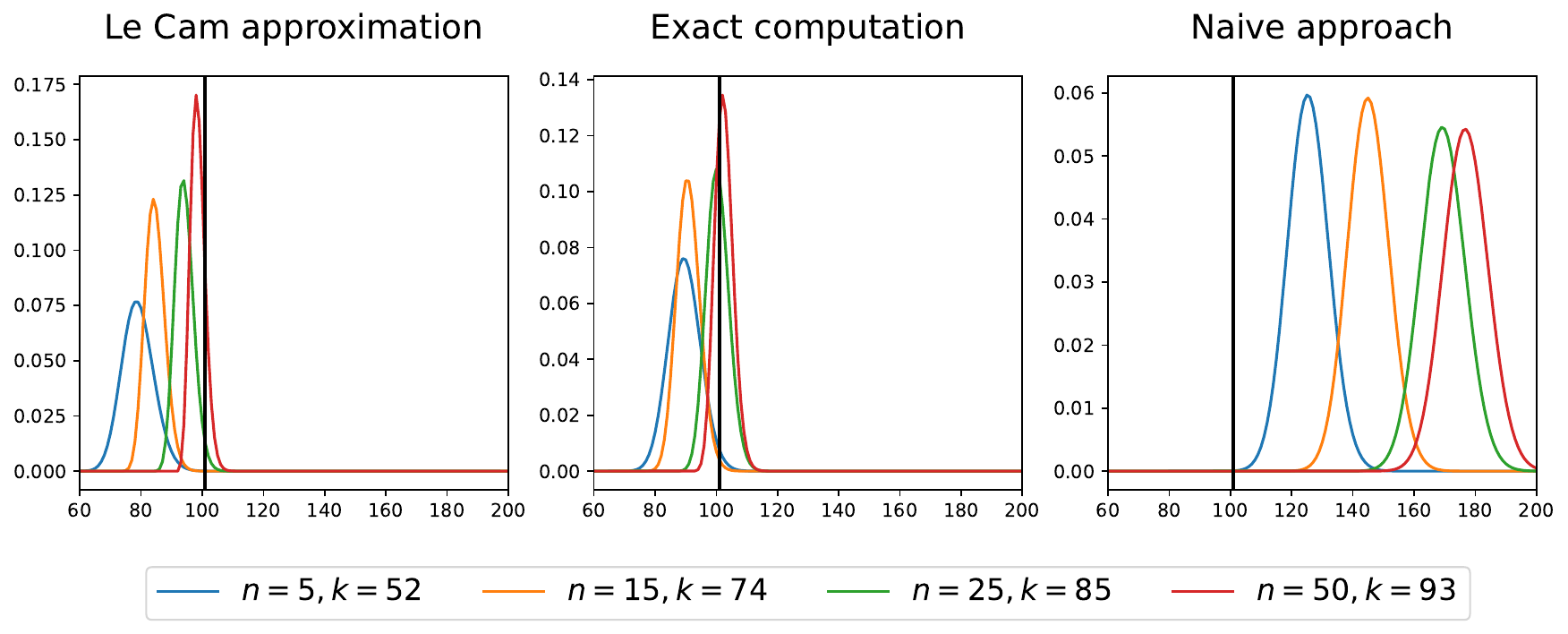}
    \caption{Posterior distribution of the total number of trees in the synthetic scenario. From left to right: calculations performed using Le Cam's approximation of the Poisson-binomial, exact computations, and posterior of $\xi^!_{\bm x^*}(\X)$. Different line colors correspond to different sample sizes; the black vertical line indicates the true number of trees. }
    \label{fig:ntree_simulation}
\end{figure}

\begin{figure}[ht]
    \centering
    \includegraphics[width=\linewidth]{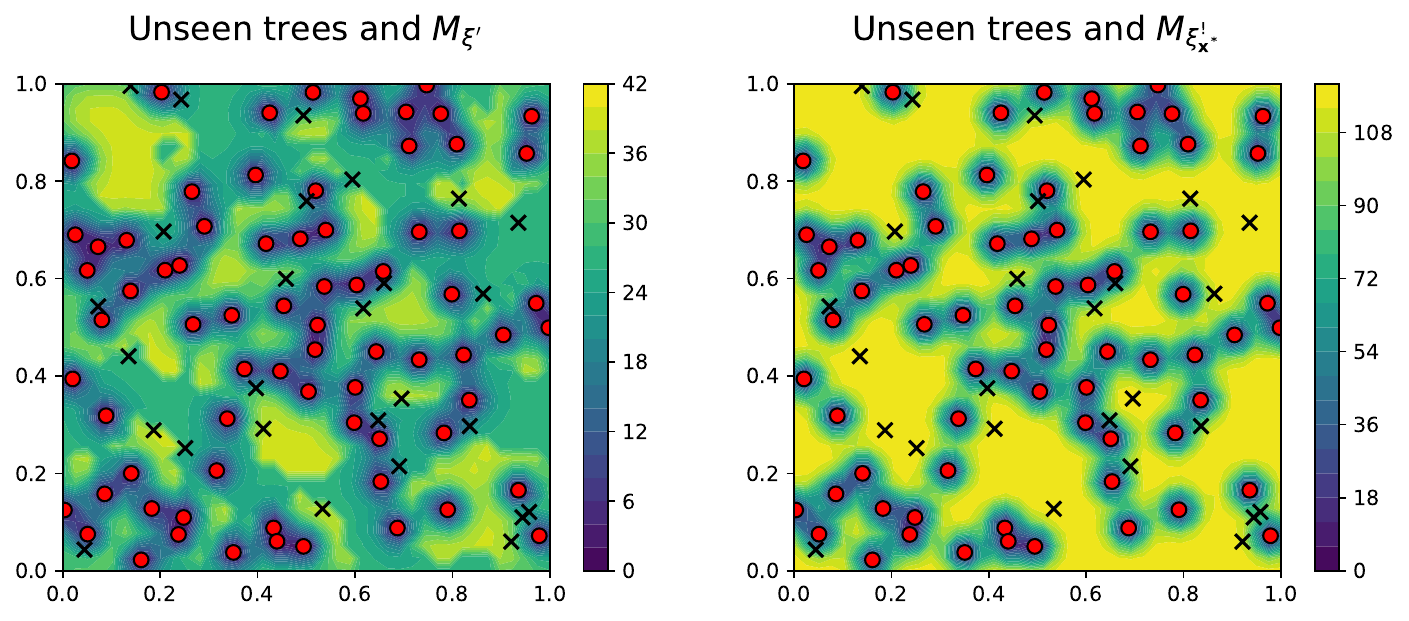}
    
    \caption{Locating the unobserved trees for $n = 15$ in the synthetic scenario: infinitesimal probability of observing an unseen tree in a given location. Left plot: the mean measure of $\xi^\prime$. Right plot: the mean measure of $\xi^!_{\bm x^*}$. The red dots represent the observed trees in the sample. The black crosses indicate the unseen trees. Note that the color scales of the two plots are different. }
    \label{fig:mean_measure_simulation}
\end{figure}

We generate data by simulating the true point process of the ``trees'', denoted with $\xi_{0}$, from a Gaussian \textsc{dpp} on $[0, 1]^2$ with intensity parameter $\rho = 100$ and scale $\alpha = 0.0535$. To the points in $\xi_0$ we attach i.i.d. marks $S_j$'s from the beta distribution of parameters $(1, 5)$, therefore obtaining a realization for $\Psi$ and $\mu$. Observations $Z_i$'s are obtained by simulating i.i.d. Bernoulli processes conditionally to $\mu$, mimicking the collection of data from the surveyors. 
 
For different sample sizes $n \in \{5, 15, 25, 50\}$, we compute the distribution of the total number of trees as $ M^\prime + k$, where $M^\prime$ is defined in \Cref{cor:dpp_beta}, and compare it with the total number of trees in $\xi_0$. 
We compare inference obtained using the exact computation of the Poisson-binomial distribution and Le Cam's approximation. Additionally, we consider a naive approach where we disregard the $Z_i$'s, retaining only the distinct locations $\bm x^*$ and computing the law of $\xi_{\bm x^*}^!$. Beyond predicting the total number of trees, we address the problem of locating the unobserved trees through $M_{\xi^\prime}$, as discussed in Section \ref{app:fitting_dpp_model}. When adopting the naive approach, this task is tackled by considering the mean measure of $\xi^!_{\bm x^*}$. All results presented here are obtained by estimating the hyperparameters of the model via the empirical Bayes approach described in Section \ref{app:fitting_dpp_model}. For completeness,  \Cref{app:details_spatial} provides the corresponding analyses under an oracle scenario, where all hyperparameters are fixed at their true values.

Figures \ref{fig:ntree_simulation} and \ref{fig:mean_measure_simulation} highlight the key features of our predictions: in estimating the total number of trees, Figure \ref{fig:ntree_simulation} clearly shows that the naive approach performs poorly across all sample sizes. Le Cam's approximation, albeit faster, introduces some errors that lead to slightly underestimating the number of trees. In contrast, the exact posterior looks centered around the true number of trees, with its variance shrinking as the sample size increases. Figure \ref{fig:mean_measure_simulation} addresses the problem of locating unseen trees for a sample size of $n=15$, comparing our proposed model with the naive approach. Refer to Section \ref{app:details_spatial} for the corresponding plots when $n \in \{5, 25, 50\}$. Beyond the substantial discrepancy between the two scales, caused by the naive approach significantly overestimating the number of trees, further remarks are needed. Specifically, consider the infinitesimal probability of observing an unseen tree at location $\dd x$. Since $\xi^!_{\bm x^*}$ is a \textsc{dpp}, the naive approach assumes that the farther $\dd x$ is from the observed trees $\bm x^*$, the more likely it is to contain an unseen tree. On the other hand, the repulsive pattern of $\xi^\prime$ is peculiar, leading to a different behavior: our model predicts that it is unlikely to find a tree at $\dd x$ if it is too close to an observed tree, as expected. However, it also predicts that locations ``too far'' from observed trees may have small probabilities of containing a tree, as they might not align with the accumulation patterns estimated from the data.

Finally, we briefly discuss the utility of imposing repulsiveness between the points $X_j$'s of $\Psi$. Consider any prior process for $\Psi$ where the points $X_j$'s are marginally i.i.d. from some diffuse distribution $G_0$, such as the Poisson, mixed Poisson, or mixed binomial processes. From the predictive characterizations in \Cref{teo:pred_char} and \Cref{teo:pred_char}, we know that the locations of the unseen trees $X_j^\prime$'s in $\Psi^\prime$ are independent of the observed locations $\bm x^*$. Furthermore, these locations remain marginally i.i.d. from $G_0$, as demonstrated in \Cref{thm:james}, \Cref{thm:mixed_poisson_bayesian}, and \Cref{thm:mixed_binom_bayesian}. Therefore, for these models, the task of locating the unobserved trees is answered by $G_0$, which provides no informative structure about the spatial arrangement of the unseen trees. This highlights the advantage of incorporating repulsiveness in this spatial illustration, as our model provides a more informative and structured prediction of tree locations.

\subsubsection{Additional details about the synthetic scenarios}\label{app:details_spatial}

We present here the analysis of the synthetic scenarios from \Cref{app:synthetic_dpp_model} under the oracle strategy, which assumes knowledge of the true values of all hyperparameters used to generate the data. First, we examine inference on the total number of trees, $M^\prime + k$, across different sample sizes, $n \in \{5, 15, 25, 50\}$. Figure \ref{fig:ntree_oracle_simulation} compares results obtained via the exact computation of the Poisson-binomial distribution and Le Cam's approximation. Additionally, we assess the naive approach described in  \Cref{app:synthetic_dpp_model}. The same observations made for Figure \ref{fig:ntree_simulation} apply here. Specifically, the naive approach performs poorly across all sample sizes when estimating the total number of trees. While Le Cam's approximation is computationally faster, it introduces some errors, leading to a slight underestimation of the tree count. In contrast, the exact posterior distribution is centered around the true number of trees, with its variance decreasing as the sample size increases.

Next, we address the problem of locating unobserved trees through 
$M_{\xi^\prime}$, as discussed in Section \ref{app:fitting_dpp_model}. Under the naive approach, this task is performed using the mean measure of 
$\xi^!_{\bm x^*}$. Figure \ref{fig:mean_measure_oracle_simulation} compares inference from our model and the naive alternative for 
$n=15$, while Figure \ref{fig:mean_measure_multiple_training_oracle_simulation} presents results from our model for 
$n \in \{5,25,50\}$. Similar insights to those discussed for Figure \ref{fig:mean_measure_simulation} apply. In particular, Figure \ref{fig:mean_measure_oracle_simulation} shows that the naive approach assumes a higher probability of finding an unobserved tree at locations farther from the observed trees 
$\bm x^*$, as inference is based on 
$\xi^!_{\bm x^*}$, which is a \textsc{dpp}. In contrast, Figures \ref{fig:mean_measure_oracle_simulation} and \ref{fig:mean_measure_multiple_training_oracle_simulation} reveal the distinctive repulsive structure of 
$\xi^\prime$ across different sample sizes. Specifically, our model predicts a low probability of finding a tree at 
$\dd x$ if it is too close to an observed tree, as expected. However, it also suggests that locations "too far" from observed trees may have a small probability of containing a tree, as they might not align with the accumulation patterns inferred from the data.

For completeness, Figure \ref{fig:mean_measure_multiple_training_fitted_simulation} presents inference on the locations of unobserved trees through $M_{\xi^\prime}$ for $n \in \{5,25,50\}$ under the scenario where hyperparameters are estimated via the empirical Bayes approach from \Cref{app:synthetic_dpp_model}. As aimed for, the results are consistent with those obtained under the oracle strategy, as evidenced by the clear similarity between Figures \ref{fig:mean_measure_multiple_training_fitted_simulation} and~\ref{fig:mean_measure_multiple_training_oracle_simulation}.

\begin{figure}[ht]
    \centering
    \includegraphics[width=\linewidth]{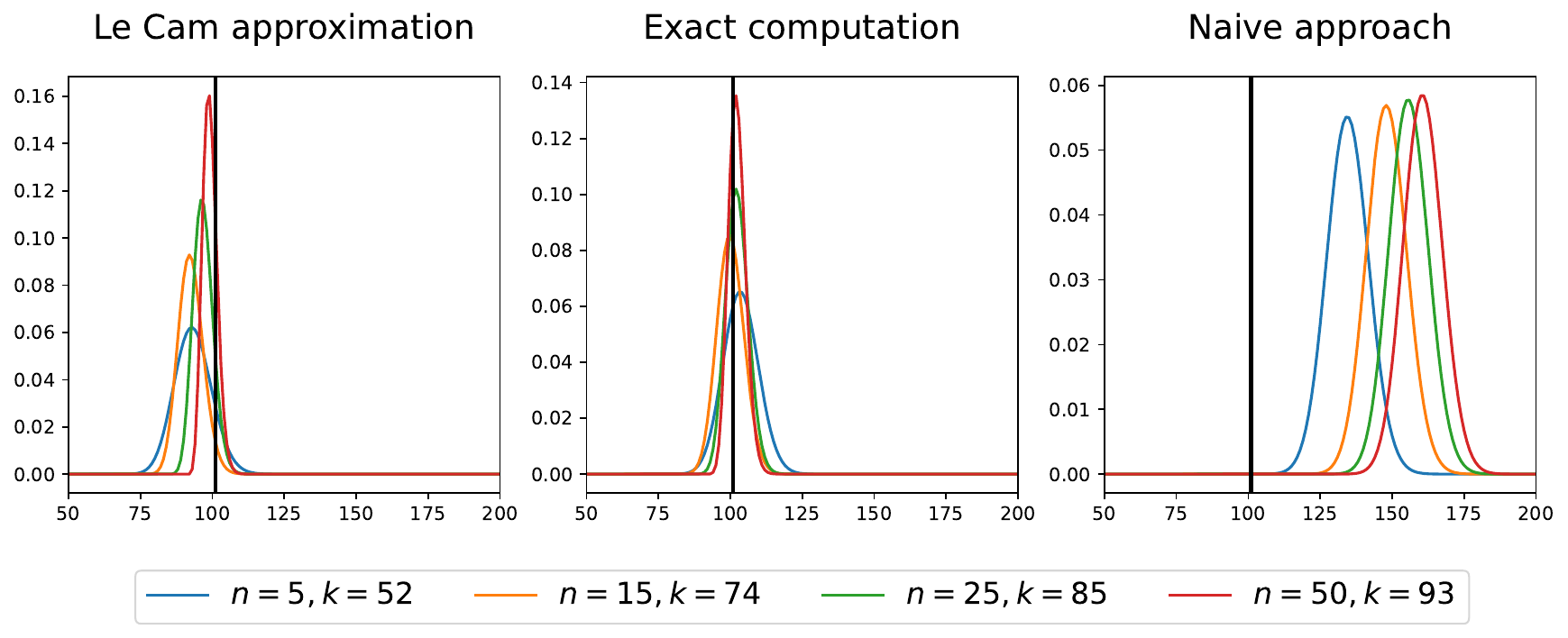}

    \caption{Posterior distribution of the total number of trees. From left to right: calculations performed using Le Cam's approximation of the Poisson-binomial, exact computations, and posterior of $\xi^!_{\bm x^*}(\X)$. Different line colors correspond to different sample sizes; the black vertical line indicates the true number of trees.}
    \label{fig:ntree_oracle_simulation}
\end{figure}

\begin{figure}[ht]
    \centering
    \includegraphics[width=\linewidth]{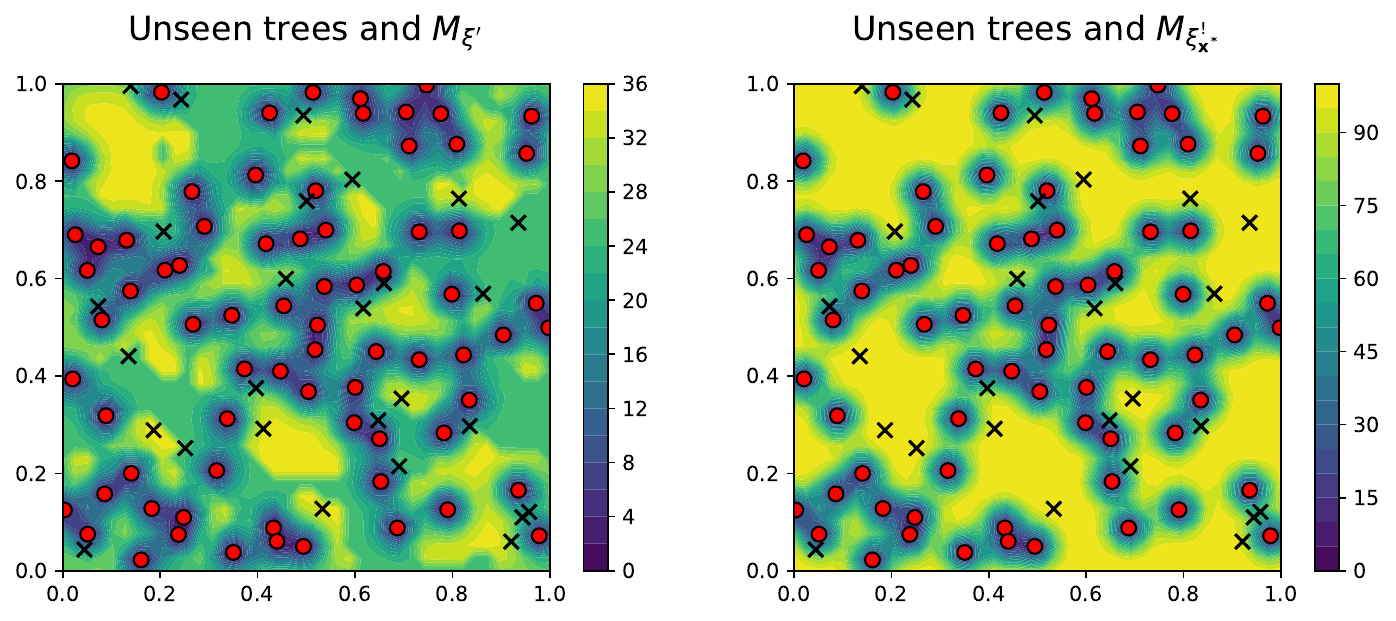}
    
    \caption{Locating the unobserved trees for $n = 15$: infinitesimal probability of observing an unseen tree in a given location. Left plot: the mean measure of $\xi^\prime$. Right plot: the mean measure of $\xi^!_{\bm x^*}$. The red dots represent the observed trees in the sample. The black crosses indicate the unseen trees. Note that the color scales of the two plots are different. }
    \label{fig:mean_measure_oracle_simulation}
\end{figure}

\begin{figure}[ht]
    \centering
    \includegraphics[width=\linewidth]{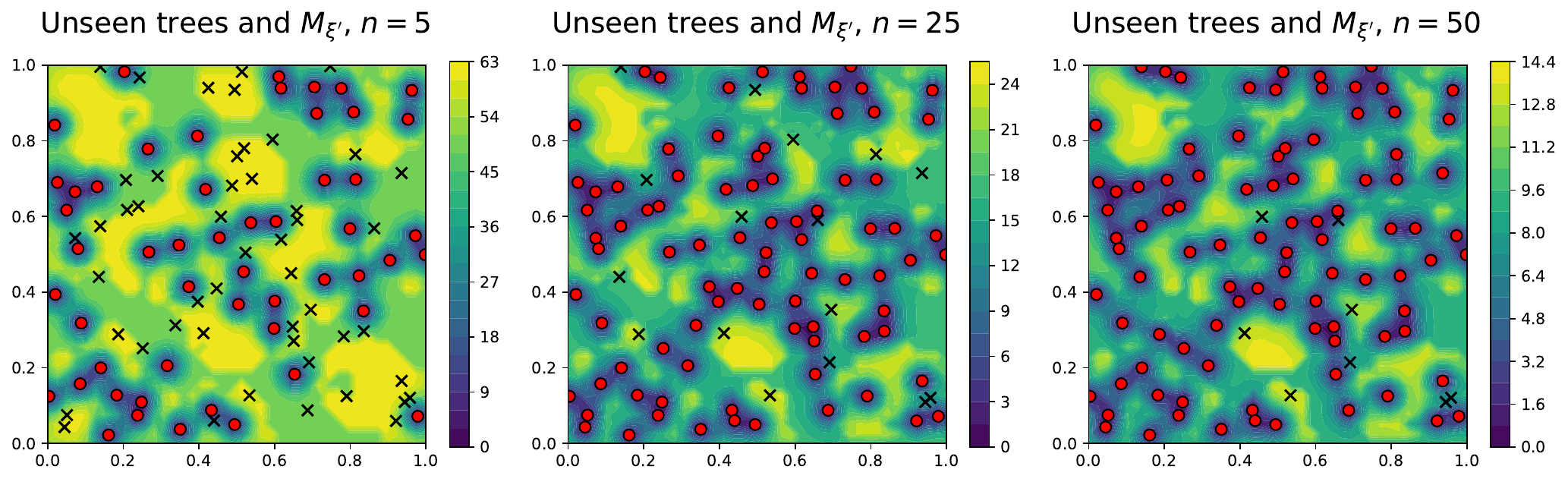}
    
    \caption{Locating the unobserved trees for $n \in \{5, 25,50\}$: infinitesimal probability of observing an unseen tree in a given location. The three plots report $M_{\xi^\prime}$ for the three sample sizes. The red dots represent the observed trees in the sample. The black crosses indicate the unseen trees. Note that the color scales of the plots are different. }
    \label{fig:mean_measure_multiple_training_oracle_simulation}
\end{figure}

\begin{figure}[ht]
    \centering
    \includegraphics[width=\linewidth]{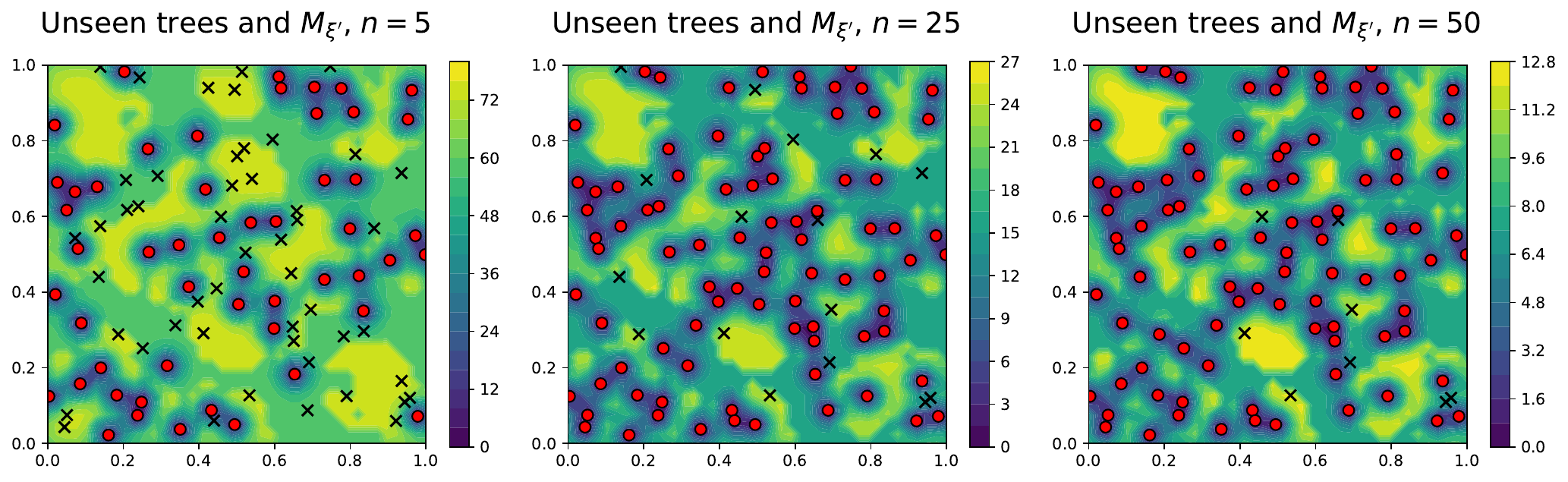}
    
    \caption{Locating the unobserved trees for $n \in \{5, 25,50\}$: infinitesimal probability of observing an unseen tree in a given location. The three plots report $M_{\xi^\prime}$ for the three sample sizes. The red dots represent the observed trees in the sample. The black crosses indicate the unseen trees. Note that the color scales of the plots are different. }
    \label{fig:mean_measure_multiple_training_fitted_simulation}
\end{figure}

\section{Detailed analysis of specific extended feature models}\label{app:examples_bayesian_analysis}

\Cref{sec:predictive_characterization} highlights the distinctive role of Poisson, mixed Poisson, and mixed binomial process priors in shaping the dependencies of the induced predictive distributions on the observed sample. 
In this section, we specialize the general Bayesian analysis developed in Section \ref{sec:general_bayesian_analysis} to these notable classes of prior distributions.
Finally, we introduce a tractable model leading to dependence on the whole frequency spectrum.

\subsection{The Poisson process prior} \label{supp:Poisson}

Consider the model \eqref{eq:representation_theorem}, where $\mu$ is as in \eqref{eq:mu_definition} and $\Psi$ is a Poisson process on $\X \times (0,1]$ with an (infinite) mean measure $\nu$. This is one of the most popular cases in the literature, and the Indian buffet process arises as a specific example. 
Under the standard assumption that each observation $Z_i$ exhibits a finite number of features a.s., the $k$-th factorial moment measures are $\sigma$-finite, as shown in \Cref{prop:poisson_details}, point (i). The disintegration in \eqref{eq:mpsi_disinteg} writes as $\nu(\dd x \, \dd s) = \rho(\dd s \mid x) G_0(\dd x)$, where $\rho$ is a kernel from $\X$ to $(0,1]$ and $G_0$ is a $\sigma$-finite measure on $\X$. The Bayesian analysis of model \eqref{eq:representation_theorem} under a Poisson process prior for $\Psi$ can be addressed by specializing theorems of Section \ref{sec:bnp} as in the next result. Let us introduce some shorthand notations that will be useful throughout this section: $\rho_n(\dd s \mid x) = (1 - s)^n \rho(\dd s \mid x)$.

\begin{corollary}[Bayesian analysis under the Poisson process] \label{thm:james} 
Consider a sample $\bm Z$ from the statistical model \eqref{eq:representation_theorem}, where $\mu$ is the functional of a Poisson point process $\Psi$ with intensity measure $\nu$ defined via \eqref{eq:mu_definition}.  
    \begin{enumerate}
        \item[(i)] The marginal distribution of the sample $\bm Z$, equals
        \[
            e^{- \varphi_n } \prod_{\ell=1}^k \int_{(0,1]} (1 - s)^{n - m_\ell} s^{m_\ell} \rho(\dd s\mid x^*_\ell)  G_0(\dd x^*_\ell),
        \]
        where we defined
        \begin{equation}\label{eq:phi_def}
        \varphi_n 
        = \sum_{i=0}^{n-1}\int_{\X \times (0,1]} s  \rho_i(\dd s\mid x) G_0(\dd x).    
        \end{equation}
        \item[(ii)] The posterior distribution of $\mu$ satisfies the distributional equality in \eqref{eq:post_mu_sum},  where in this case $\mu^\prime$ is a \textsc{crm} with L\'evy intensity measure $\rho_n(\dd s\mid x) G_0(\dd x)$. The weights $S^*_\ell$'s of previously observed features are independent random variables, and independent of $\mu^\prime$, with marginal density $f_{S^*_\ell}(\dd s) \propto s^{m_\ell} (1 - s)^{n - m_\ell} \rho(\dd s\mid x^*_\ell)$, as $\ell=1, \ldots  , k$.
        \item[(iii)] The predictive distribution of a future observation $Z_{n+1}$, given the sample $\bm Z$, equals  the distribution of the measure defined in \eqref{eq:pred_sum},  where $Z^\prime_{n+1}$ is a  Poisson process on $\X$ with (finite) intensity measure given by $\int_{(0,1]} s \rho_n(\dd s\mid x) G_0(\dd x)$, and the
        $A^*_{n+1, \ell}$'s are independent Bernoulli random variables with parameters $S^*_\ell$'s, as $\ell=1, \ldots , k$.
    \end{enumerate}
\end{corollary}
The previous corollary is in agreement with the theory developed by \citet{Jam(17)} for the Poisson process case.
We first point out that the Poisson process $Z_{n+1}^\prime$ in point (iii) depends on the observable sample $\bm Z$ only through the sample size $n$, and this is not surprising by virtue of Theorem \ref{teo:pred_char}. 
We also observe that if  $\rho(\dd s\mid x) = \rho(\dd s)$ is independent of the location, then $G_0$ can be taken as a probability measure, as evident from \Cref{prop:poisson_details}, point (ii), and $\mu$ turns out to be a homogeneous \textsc{crm}. In this case,  $Z_{n+1}^\prime$ is a mixed binomial process $\mathrm{MB}(G_0, q_{K^\prime})$, where  $q_{K^\prime}$ is the probability mass function of the Poisson distribution with mean $\lambda_n = \int_{(0,1]} s \rho_n(\dd s)$, where $\rho_n(\dd s) = (1-s)^n \rho(\dd s)$. In other words, 
$Z^\prime_{n+1}$ has $K^\prime \sim \mathrm{Poi}(\lambda_n)$ points, represented as
$
Z^\prime_{n+1}= \sum_{\ell=1}^{K^\prime} \delta_{X^\prime_\ell}
$,
where the $X^\prime_\ell$'s are i.i.d. from $G_0$. Again, the distribution of $Z^\prime_{n+1}$ depends on the sample only through the sample size $n$.

\subsection{The mixed Poisson process prior} \label{supp:mixed_Poisson}

Let now $\Psi \sim \mathrm{MP}(\nu, f_\gamma)$, where $\nu$ is a (infinite) locally finite measure on $\X \times (0, 1]$. That is, $\Psi $ is a Poisson process with random mean measure $\gamma\nu$ and $\gamma \sim f_{\gamma}$ is a positive random variable.
As usual, we assume that $Z_i(\X) < \infty$ a.s.. Moreover, we suppose that $\gamma$ has finite moments of any order $k \geq 1$. Then, the $k$-th factorial moment measures are $\sigma$-finite for any $k$ (cf. \Cref{prop:poisson_details}) and $\nu(\dd x \, \dd s) = \rho(\dd s\mid x) G_0(\dd x)$, where $\rho$ is a kernel from $\X$ to $(0,1]$ and $G_0$ is a $\sigma$-finite measure on $\X$. 
The following corollary specifies the Bayesian analysis of this extended feature allocation model.

\begin{corollary}[Bayesian analysis under the mixed Poisson process] \label{thm:mixed_poisson_bayesian}
Consider a sample $\bm Z$ from the statistical model \eqref{eq:representation_theorem}, where $\mu$ is the functional of  a mixed Poisson point process $\Psi$, i.e.,  $\Psi \sim \mathrm{MP}(\nu, f_\gamma)$, defined via \eqref{eq:mu_definition}.
    \begin{enumerate}
        \item[(i)] The marginal distribution of $\bm Z$ equals
        \[
            \E\left(e^{-\gamma \varphi_n} \gamma^k\right)  \prod_{\ell=1}^k \int_{(0,1]} (1 - s)^{n - m_\ell} s^{m_\ell} \rho(\dd s\mid x^*_\ell)  G_0(\dd x^*_\ell),
        \]
        where $\varphi_n$ is defined as in \eqref{eq:phi_def}.
        \item[(ii)] The posterior distribution of $\mu$ satisfies the distributional equality in  \eqref{eq:post_mu_sum}, where  $\mu^{\prime} = \sum_{j \geq 1} S^\prime_j \delta_{X^\prime_j}$ and $\{(X_j^\prime, S_j^\prime)\}_{j \geq 1}$ are the points of a mixed Poisson point process $\Psi^\prime \sim  \mathrm{MP} (\nu^\prime, f_{\tilde\gamma})$, with 
\[
\nu^\prime (\dd x \, \dd s) =  \rho_n(\dd s\mid x) G_0(\dd x) \quad \text{and} \quad f_{\tilde\gamma} (\dd \gamma) \propto e^{-\gamma \varphi_n} \gamma^k f_\gamma(\dd \gamma).
\]
       In addition, the  $S^*_\ell$'s in \eqref{eq:post_mu_sum} are independent random variables, and independent of $\mu^\prime$, with marginal density $f_{S^*_\ell}(\dd s) \propto s^{m_\ell} (1 - s)^{n - m_\ell} \rho(\dd s\mid x^*_\ell)$, as $\ell=1, \ldots, k$.
        \item[(iii)] The predictive distribution of $Z_{n+1}$, given the sample $\bm Z$, satisfies the distributional equality in \eqref{eq:pred_sum}, where $Z^\prime_{n+1}$ is a mixed Poisson process on $\X$ with distribution
        \begin{equation} \label{eq:Z_prime_mixed_poisson}
        Z^\prime_{n+1} \sim \mathrm{MP} \Big(\int_{(0,1]} s \rho_n(\dd s\mid x) G_0(\dd x), f_{\tilde{\gamma}} \Big), 
        \end{equation}
        where the first parameter is a finite measure on $\X$. Moreover, the $A_{n+1, \ell}^*$'s in \eqref{eq:pred_sum} are independent Bernoulli random variables with parameters $S^*_\ell$'s, as $\ell=1, \ldots , k$. 
    \end{enumerate}
\end{corollary}
We emphasize that the process $Z^\prime_{n+1}$ in \eqref{eq:Z_prime_mixed_poisson} depends on the initial sample $\bm Z$ only through the sample size $n$ and the number of distinct features $k$, which appear in the mixing law $f_{\tilde{\gamma}}$, and on no additional sampling information, as expected from Theorem \ref{teo:pred_char}.
Finally, if the kernel $\rho$ does not depend on the location $x$, i.e.,  $\rho(\dd s\mid x) = \rho(\dd s)$, then $G_0$ can be taken as a probability measure (see \Cref{prop:poisson_details}, point (ii)) and $\mu$, conditionally on $\gamma$,  turns out to be a homogeneous \textsc{crm}. Thus, thanks to \Cref{lemma:MP_as_MB}, $Z_{n+1}^\prime$ is a mixed binomial process distributed as $Z_{n+1}^\prime\sim\mathrm{MB}(G_0, q_{K^\prime})$, where $q_{K^\prime}\mid \tilde\gamma$ is a Poisson density with parameter
$\tilde{\gamma}\int_{(0,1]} s \rho_n(\dd s)$, where $\rho_n(\dd s) = (1-s)^n \rho(\dd s)$, conditionally on a positive random variable $\tilde{\gamma}$ with probability distribution $f_{\tilde{\gamma}}$.
The stable beta scaled process in \cite{Cam(23)} and the mixtures of Indian buffet processes analyzed in \cite{ghilotti2024bayesian} fall under the umbrella of models considered in this section.

\subsubsection{A fresh look at scaled processes}

We now clarify the connection between a stable beta scaled process \citep{Cam(23)} and a mixed Poisson process, which has been only mentioned at the end of the last section. A scaled process is a random measure $\mu= \sum_{j \geq 1} S_j \delta_{X_j}$, where the $X_j$'s are i.i.d. random variables from a base measure $G_0$ on $\X$, while $(S_j)_{j \geq 1}$ arises as a suitable transformation of a Poisson process on the positive real line with intensity measure $\rho(\dd s)$. More precisely, to define the sequence of weights, one considers the jumps $(\Delta_j)_{j \geq 1}$ of a Poisson process in decreasing order and set $S_j^{\Delta_1}:= \Delta_{j+1}/\Delta_1$. Then, the distribution of $(S_j)_{j \geq 1}$ is obtained by mixing the conditional distribution of $(S_j^{\Delta_1})_{j \geq 1}$, given $\Delta_1$, with respect to a new distribution for the largest jump $\Delta_1$. See \cite{Cam(23)} for additional details.

Under a scaled process prior for the model \eqref{eq:representation_theorem}, the
distribution of $Z^\prime_{n+1}$ in \eqref{eq:pred_sum} may depend on $n$, $k$ and $\bm m$, and often involves intractable expression \citep[Proposition 2]{Cam(23)}. The notable tractable case is represented by scaled processes obtained from stable subordinators, i.e., $\rho(\dd s)= c s^{-1 - \alpha} \dd s$ for some constants $c > 0$ and $\alpha \in (0, 1)$, referred to as stable beta scaled processes.
In this case, $Z^\prime_{n+1}$ depends on the sample only through  $n$ and $k$.
However, \Cref{teo:pred_char} claims that such a dependence is retained if and only if $\Psi$ is a mixed Poisson or a mixed binomial process and, at a first glance,  the definition of scaled processes does not align with that of mixed Poisson processes.
This apparent inconsistency can be solved by resorting to an alternative construction of scaled processes that can be evinced either by   \citep[Theorem 1.1]{Jam(15)} or \citep[Lemma~1]{Cam(23)}.
\begin{proposition} \label{def:scaled}
    Let $\rho: \R_+ \rightarrow \R_+$ be the L\'evy intensity of a subordinator, $C$ a positive random variable, and $G_0$ a diffuse measure on $\X$. Let $\Psi = \sum_{j\geq 1}  \delta_{(X_j, S_j)}$ be such that 
    \[
        \Psi \mid  C \sim \mathrm{PP}\left( C \rho(  C s) \indicator_{(0,1]}(s) \dd s \, G_0(\dd x)\right).
    \]
    Then, $\mu= \sum_{j \geq 1} S_j \delta_{X_j}$ is a scaled process. 
\end{proposition}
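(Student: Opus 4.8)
The plan is to reconcile the two descriptions of a scaled process: the ``size-biased'' construction recalled in the text (take the ranked jumps $(\Delta_j)_{j\ge 1}$ of a Poisson process with intensity $\rho$, set $S_j^{\Delta_1}=\Delta_{j+1}/\Delta_1$, and then mix over a new law for $\Delta_1$) and the Poisson-conditional description in the statement. First I would fix the value of the largest jump, say $\Delta_1 = c$, and recall the classical fact (going back to the theory of Poisson processes and used in \citep{Jam(15)}) that, conditionally on $\Delta_1=c$, the remaining jumps $(\Delta_{j+1})_{j\ge 1}$ of a Poisson process with intensity $\rho(\dd s)$ form a Poisson process on $(0,c]$ with intensity $\rho(\dd s)\indicator_{(0,c]}(s)$. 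This is the standard description of a Poisson process conditioned on its maximal atom: removing the largest point leaves a Poisson process restricted to lie below that point.

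Next I would apply the mapping $s \mapsto s/c$ to this conditional Poisson process. Since the image of a Poisson process under a measurable bijection is again a Poisson process with the pushed-forward intensity, the normalized jumps $(S_j^{c})_{j\ge 1} = (\Delta_{j+1}/c)_{j\ge 1}$ form, conditionally on $\Delta_1 = c$, a Poisson process on $(0,1]$ whose intensity is obtained by the change of variables $s \mapsto cs$ applied to $\rho(\dd s)\indicator_{(0,c]}(s)$; a one-line Jacobian computation gives the intensity $c\,\rho(cs)\,\indicator_{(0,1]}(s)\,\dd s$. Tensoring with the i.i.d. labels $X_j \simiid G_0$, which are independent of everything else and live on a diffuse space, yields that $\sum_{j\ge 1}\delta_{(X_j, S_j^{c})}$ is, conditionally on $\Delta_1 = c$, a $\mathrm{PP}\big(c\,\rho(cs)\indicator_{(0,1]}(s)\,\dd s\,G_0(\dd x)\big)$. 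Finally, in both constructions one mixes over a fresh law for the largest jump: identifying that mixing law with the distribution of $C$ in the statement, and integrating the conditional Poisson law against it, gives exactly the hierarchical description $\Psi \mid C \sim \mathrm{PP}(C\rho(Cs)\indicator_{(0,1]}(s)\,\dd s\,G_0(\dd x))$, and hence $\mu = \sum_{j\ge 1} S_j \delta_{X_j}$ is a scaled process.

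The main obstacle is the first step: making rigorous the claim that removing the largest atom of a Poisson process with a possibly infinite intensity $\rho$ (a subordinator Lévy measure, so $\rho((0,\infty)) = \infty$ but $\int (1\wedge s)\,\rho(\dd s) < \infty$) leaves a Poisson process on $(0,\Delta_1]$ with the restricted intensity. When $\rho((\epsilon,\infty))<\infty$ for every $\epsilon>0$ the ranked jumps are well defined and this is a clean consequence of the Poisson property together with the independence of restrictions to disjoint sets; one has to check that conditioning on the event $\{\Delta_1 \in \dd c\}$ is a legitimate disintegration (the event has positive ``density'' since $\rho((c,\infty))<\infty$) and that the conditional law is the asserted restricted Poisson process. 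Since the statement explicitly allows us to invoke \citep[Theorem 1.1]{Jam(15)} or \citep[Lemma~1]{Cam(23)}, I would cite one of these for this disintegration rather than reproving it, and devote the written proof to the transparent change-of-variables and mixing steps above.
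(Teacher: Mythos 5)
Your plan is correct, and it actually supplies more than the paper does: the paper states Proposition \ref{def:scaled} without proof, remarking only that it ``can be evinced'' from \citep[Theorem 1.1]{Jam(15)} or \citep[Lemma~1]{Cam(23)}, and no argument appears in the supplementary material. Your reconstruction is precisely the argument underlying those references: since $\rho$ is a subordinator L\'evy intensity, $\rho((\epsilon,\infty))\leq \epsilon^{-1}\int_0^\infty (1\wedge s)\,\rho(\dd s)<\infty$ for every $\epsilon>0$, so the ranked jumps exist; the event $\{\Delta_1\in \dd c\}$ depends only on the restriction of the Poisson process to $[c,\infty)$, so by independence over disjoint sets the points below $c$ remain a Poisson process with intensity $\rho\,\indicator_{(0,c)}$; the change of variables $s\mapsto s/c$ gives the intensity $c\,\rho(cs)\indicator_{(0,1]}(s)\,\dd s$; independent marking by $G_0$ and mixing over the new law of $\Delta_1$ (identified with $C$) yields the hierarchical description in the statement. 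The only point I would make explicit in a written version is the one you already flag: the disintegration with respect to $\Delta_1$ should either be justified directly (e.g., via the joint density of $\Delta_1$ and the restricted process, using $\rho((c,\infty))<\infty$) or delegated to the cited lemmas, exactly as you propose. No gap.
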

The construction outlined in Proposition \ref{def:scaled}  is available for any scaled process, as introduced at the beginning of the section.
It becomes clear that $\Psi$ is a mixed Poisson process if and only if for any $s$ and any $C$, the following factorization holds  $C \rho(C s) = h(C) \tilde \rho(s)$ where $\tilde \rho(s)$ is a L\'evy density and $h: \R_+ \rightarrow \R_+$ is a function. When $\rho(s) = s^{-1 - \alpha}$, i.e., the case of stable subordinators, we get $C \rho(C s) = C^{-\alpha} s^{-1 -\alpha}$ so that in this case the scaled process $\mu$ is induced by a mixed Poisson process $\Psi$ directed by the law of $C^{-\alpha}$.
Finally, the predictive characterization \citep[Theorem 1]{Cam(23)} of stable beta scaled processes within the class of scaled processes is straightforward from Theorem \ref{teo:pred_char}.
Indeed, the L\'evy intensities of stable subordinators are the only $\rho$ such that $C \rho(C s) = h(C) \tilde \rho(s)$. Therefore, scaled processes obtained from stable subordinators are the only scaled processes induced by a mixed Poisson process  $\Psi$. Finally, \Cref{teo:pred_char} highlights how the dependence of the distribution of $Z_{n+1}^\prime$ solely on $n$ and $k$ is not a distinctive feature of stable beta scaled processes but pertains to the larger class of priors obtained from mixed Poisson and mixed binomial processes.

\subsection{The mixed binomial process prior} \label{sec:mixed_binomial}

We now consider the other class of priors identified in Theorem \ref{teo:pred_char}, i.e., mixed binomial processes.
Let $\Psi \sim \mathrm{MB}(\nu, q_M)$, where $q_M$ is a probability mass function on the nonnegative integers with finite moments and $\nu$ is a probability measure on $\X \times (0,1]$.
Therefore, the disintegration $\nu(\dd x \, \dd s) = \rho(\dd s \mid x) G_0(\dd x)$ always holds, where $G_0$ is a probability measure on $\X$, and $\rho$ is a probability kernel from $\X$ to $(0,1]$. This follows from a suitable application of \cite[Theorem 3.4]{Kallenberg21}. 
The Bayesian analysis of model \eqref{eq:representation_theorem} under the mixed binomial process prior for $\Psi$ is presented in the next result.  For ease of notation, we will use $\kappa_n$ to represent the integral $\kappa_n = \int_{\X \times (0, 1]} \rho_n(\dd s \mid x) G_0(\dd x)$.

\begin{corollary}[Bayesian analysis under the mixed binomial process] \label{thm:mixed_binom_bayesian}
Consider a sample $\bm Z$ from the statistical model \eqref{eq:representation_theorem}, where $\mu$ is the functional of a mixed binomial point process $\Psi$, i.e.,  $\Psi \sim \mathrm{MB}(\nu, q_M)$, defined via \eqref{eq:mu_definition}. 
    \begin{enumerate}
        \item[(i)] The marginal distribution of $\bm Z$ equals
        \[
             \E\left\{ (\kappa_n) ^{\tilde M}\right\} \E(M^{(k)}) \prod_{\ell=1}^k \int_{(0,1]} s^{m_\ell} (1 - s)^{n - m_\ell} \rho(\dd s \mid x^*_\ell) G_0(\dd x^*_\ell),
        \]
        where the first expected value is taken with respect to a nonnegative integer-valued random variable $\tilde M$ having probability mass function 
        \[
        q_{\tilde{M}}(m) \propto q_M(m+k) \frac{(m + k)!}{m!}.
        \]
        
        \item[(ii)] The posterior distribution of $\mu$ satisfies the distributional equality in  \eqref{eq:post_mu_sum}, where  $\mu^{\prime} = \sum_{j \geq 1} S^\prime_j \delta_{X^\prime_j}$ and $\{(X_j^\prime, S_j^\prime)\}_{j \geq 1}$ are the points of a mixed binomial process $\Psi^\prime \sim \mathrm{MB}  (\nu^\prime, q_{M^\prime})$  with 
        \[
        \nu^\prime (\dd s\, \dd x)=\rho_n(\dd s \mid x) G_0(\dd x) \quad \text{ and }  \quad q_{M^\prime}(m) \propto (\kappa_n)^m q_{\tilde{M}}(m).  
        \]
        Moreover, the $S^*_\ell$'s are independent random variables, and independent of $\mu^\prime$, with marginal density $f_{S^*_\ell}(\dd s) \propto s^{m_\ell} (1 - s)^{n - m_\ell} \rho(\dd s\mid x^*_\ell)$, as $\ell=1, \ldots , k$.

        \item[(iii)] The predictive distribution of a future observation $Z_{n+1}$, given the sample $\bm Z$, satisfies the distributional equality in \eqref{eq:pred_sum}, where $Z^\prime_{n+1}$ is 
        a mixed binomial point process on $\X$ with parameters $(\tilde G, q_{K^\prime})$, with 
        $\tilde G(\dd x) = \int_{(0,1]} s \rho_n(\dd s \mid x) G_0(\dd x)$ and
        \[
            q_{K^\prime}(m) \propto \sum_{z \geq m} \binom{z}{m} \kappa_n^z q_M(z+k) 
        c_p^{m} (1 - c_p)^{z-m} \frac{(z + k)!}{z!} ,
        \]
        having set 
        \[
            c_p = \frac{\int_{\X \times (0,1]} s \rho_n(\dd s \mid x)G_0(\dd x)}{ \int_{\X \times (0,1]} \rho_n(\dd s \mid x) G_0(\dd x)}. 
        \]
        Moreover, the $A_{n+1, \ell}^*$'s in \eqref{eq:pred_sum} are independent Bernoulli random variables with parameters $S^*_\ell$'s, as $\ell=1, \ldots, k$.
    \end{enumerate}
\end{corollary}

As expected, the distribution of $Z^\prime_{n+1}$ in point (iii) of the previous corollary depends on the sample only through the sample size $n$ and the number of observed features $k$. 
A special case of interest is obtained by assuming that  $\rho(\dd s\mid x) = \rho(\dd s)$, so that $\rho_n(\dd s \mid x) = \rho_n(\dd s) = (1 - s)^n \rho(\dd s)$.
Under this assumption, $Z^\prime_{n+1}$ is a mixed binomial process $\mathrm{MB}(G_0, q_{K^\prime})$ and $q_{K^\prime}$ does not depend on $G_0$. We further specialize this case to two choices of $M \sim q_M$.

If $M \sim \mathrm{Poi}(\lambda)$, then $\Psi$ is a Poisson process with finite intensity measure. In this case, $\Psi^\prime$ is a Poisson process with finite intensity $\lambda \rho_n(\dd s) G_0(\dd x)$, having $M^\prime \sim \mathrm{Poi}(\lambda \rho_n(0, 1])$  points. 
In addition, $Z^\prime_{n+1}$ is a Poisson process with finite intensity, having $K^\prime$ points, with $K^\prime$ being a Poisson random variable with mean $\lambda \int_{(0,1]} s \rho_n(\dd s)$.

Let now $M$ be a negative binomial random variables with parameters $(r, p)$, where $p \in [0,1]$ denotes the success probability and $r > 0$ is the number of successes. 
In this case, $\Psi^\prime$ is a mixed binomial process with $M^\prime$ points, where $M^\prime$ is a negative binomial random variable with updated parameters $r^\prime = r+k $ and $p^\prime= 1 - \kappa_n (1-p)$. Moreover, the process involving new features  $Z^\prime_{n+1}$ is a mixed binomial process with $K^\prime$ points, where ${K^\prime}$ is a negative binomial with parameters $ r^{th}=r+k$ and 
\[
    p^{th} = \frac{p^\prime}{p^\prime + (1 - p) \int_{(0,1]} s\rho_n(\dd s) }.
\]
We finally point out that the Gibbs-type feature models with finitely many features introduced in \cite{ghilotti2024bayesian} correspond to specific choices of the two models specified above.

\subsection{Predictions depending on the whole frequency spectrum} \label{supp:prediction_all}

In the examples analyzed so far, we have found predictive distributions for $Z^\prime_{n+1}$ depending either solely on $n$, or $n$ and $k$, or $n$ and $\bm x^*$ (including $k$).
As shown in \cite{Jam(15), Cam(23)} and as discussed at the end of Section \ref{sec:predictive_characterization}, in general, scaled processes yield predictive distributions for the newly discovered features which depend on the sampling information through $n, k$ and the frequency spectrum $m_1,\ldots,m_k$. 
However, outside the case of stable beta scaled processes, the resulting expressions for the posterior and marginal distributions (and the associated computations) are somewhat involved. For instance, if one considers scaled processes built from gamma and generalized gamma subordinators, the distribution of $\mu^\prime$ involves intractable integrals that must be evaluated numerically.

Here, we propose a different and straightforward strategy to induce predictive distribution depending on the whole frequency spectrum while maintaining computational convenience. 
The idea is simple and generally applicable: starting from the expression of the L\'evy intensity $\rho$ of a subordinator, assign a prior distribution to a parameter that does not enter only multiplicatively in the expression.
For simplicity and specificity, we will consider here the case of the three-parameter beta process \citep{Teh09, Bro(14)}, i.e., model  \eqref{eq:representation_theorem} with $\mu$ a homogeneous \textsc{crm} having L\'evy intensity measure
\begin{equation}\label{eq:sb_intensity}
     \rho(\dd s) \, G_0 (\dd x) = s^{-1 - \alpha} (1 - s)^{\beta + \alpha - 1}\indicator_{(0, 1]}(s) \dd s \, G_0(\dd x),
\end{equation}
where $\alpha \in (0, 1)$ and $\beta > - \alpha$. 
The next proposition shows how considering $\alpha$ random yields the desired predictive distribution.
\begin{proposition}\label{prop:frequency_dependence}
    Consider a sample $\bm Z$ from the statistical model \eqref{eq:representation_theorem}, where $\mu$ is the functional of a point process $\Psi$ defined via \eqref{eq:mu_definition} and $\Psi$ is such that 
    \begin{align*}
        \Psi \mid \alpha &\sim \mathrm{PP}(\gamma s^{-1 - \alpha} (1 - s)^{\beta + \alpha - 1}\indicator_{(0, 1]}(s) \dd s \, G_0(\dd x)), \\
        \alpha &\sim \pi_\alpha,
    \end{align*}
    where $\gamma > 0, \beta > 0$, $G_0$ is a diffuse probability measure on $\X$ and $\pi_\alpha$ is supported on $(0,1)$.
Then, if $\pi_\alpha$ is not a degenerate (Dirac) measure, the predictive distribution of $Z^\prime_{n+1}$, given the sample $\bm Z$,
    depends on $n$, $k$ and $m_1,\ldots, m_k$.
\end{proposition}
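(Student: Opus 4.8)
The plan is to condition on $\alpha$, where model \eqref{eq:representation_theorem} becomes a three‑parameter beta process — a homogeneous Poisson‑based prior covered by \Cref{thm:james} — and then to mix over the posterior of $\alpha$ given the sample. Conditionally on $\alpha\in(0,1)$, $\Psi\mid\alpha$ is a Poisson process with intensity $\nu_\alpha(\dd x\,\dd s)=\gamma\, s^{-1-\alpha}(1-s)^{\beta+\alpha-1}\indicator_{(0,1]}(s)\,\dd s\,G_0(\dd x)$, and $\int_{\X\times(0,1]}s\,\nu_\alpha=\gamma B(1-\alpha,\beta+\alpha)<\infty$, so $Z_i(\X)<\infty$ a.s.\ and the regularity hypotheses of \Cref{thm:james} hold conditionally; the $\sigma$‑finiteness of $M_\Psi^{(k)}$ is inherited after averaging over $\alpha$. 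Since $\rho(\dd s\mid x)=\gamma s^{-1-\alpha}(1-s)^{\beta+\alpha-1}\indicator_{(0,1]}(s)\,\dd s$ does not depend on $x$ and $G_0$ is a probability measure, \Cref{thm:james}(i) gives
\[
    \Pp(\bm Z \mid \alpha) \;=\; e^{-\varphi_n(\alpha)}\,\gamma^{k}\prod_{\ell=1}^{k} B\!\left(m_\ell-\alpha,\; n-m_\ell+\beta+\alpha\right),
\]
where $\varphi_n(\alpha)=\gamma\sum_{i=0}^{n-1}B(1-\alpha,\beta+\alpha+i)$ does not involve the frequency counts, so the posterior of $\alpha$ is
\[
    \pi_\alpha(\dd\alpha\mid\bm Z)\;\propto\; e^{-\varphi_n(\alpha)}\prod_{\ell=1}^{k} B\!\left(m_\ell-\alpha,\,n-m_\ell+\beta+\alpha\right)\pi_\alpha(\dd\alpha),
\]
and depends on $\bm Z$ only through $n$ and $\bm m$. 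Moreover, conditionally on $\alpha$ we are in the homogeneous \textsc{crm} case discussed after \Cref{thm:james}: $Z^\prime_{n+1}\mid(\bm Z,\alpha)\sim\mathrm{MB}(G_0,q_{K^\prime\mid\alpha})$, with $q_{K^\prime\mid\alpha}$ the Poisson law of mean $\lambda_n(\alpha):=\gamma B(1-\alpha,\,n+\beta+\alpha)$, depending on $(\bm Z,\alpha)$ only through $n$ and $\alpha$. Averaging, $Z^\prime_{n+1}\mid\bm Z\sim\mathrm{MB}(G_0,q_{K^\prime})$ where $q_{K^\prime}$ is the Poisson mixture driven by the posterior law of $\lambda_n(\alpha)$; in particular the predictive depends on $\bm Z$ only through $n$ and $\bm m$.

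It remains to show the dependence is genuine. For fixed $n$, the map from the posterior law of $\alpha$ to the law of $Z^\prime_{n+1}$ is injective: the law of $\mathrm{MB}(G_0,q_{K^\prime})$ determines $q_{K^\prime}$ ($G_0$ being fixed and diffuse); the probability generating function of $q_{K^\prime}$ at $1-u$, $u\ge 0$, is $\E[e^{-u\lambda_n(\alpha)}\mid\bm Z]$, the Laplace transform of the posterior law of $\lambda_n(\alpha)$, which is therefore determined by $q_{K^\prime}$; and $\alpha\mapsto\lambda_n(\alpha)=\gamma B(1-\alpha,n+\beta+\alpha)$ is strictly increasing on $(0,1)$ — since $x\mapsto\Gamma^\prime(x)/\Gamma(x)$ is increasing and $n+\beta+\alpha>1-\alpha$ — hence injective, so the law of $\lambda_n(\alpha)$ determines that of $\alpha$. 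It thus suffices to exhibit feature configurations with the same $n$ inducing different posteriors of $\alpha$. Take $n=2$ and compare a single feature of frequency $1$ with a single feature of frequency $2$: the posteriors are proportional to $g_1(\alpha)\pi_\alpha(\dd\alpha)$ and $g_2(\alpha)\pi_\alpha(\dd\alpha)$ with
\[
    \frac{g_1(\alpha)}{g_2(\alpha)} \;=\; \frac{B(1-\alpha,\,1+\beta+\alpha)}{B(2-\alpha,\,\beta+\alpha)} \;=\; \frac{\beta+\alpha}{1-\alpha},
\]
which is strictly increasing in $\alpha\in(0,1)$ (logarithmic derivative $(1+\beta)/\{(\beta+\alpha)(1-\alpha)\}>0$), hence non‑constant. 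As $\pi_\alpha$ is not degenerate, a strictly monotone function cannot be $\pi_\alpha$‑a.s.\ constant, so the two posteriors differ and, by the injectivity above, so do the corresponding laws of $Z^\prime_{n+1}$: the predictive genuinely depends on the frequency counts. Comparing instead the configurations $(1)$ and $(1,1)$ (still $n=2$) gives ratio $\{\gamma B(1-\alpha,1+\beta+\alpha)\}^{-1}$, again non‑constant, so the predictive also genuinely depends on $k$; dependence on $n$ is immediate.

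The main obstacle is precisely this last step — passing from ``the posterior of $\alpha$ varies with $\bm m$'' to ``the predictive of $Z^\prime_{n+1}$ varies with $\bm m$'' — which rests on injectivity of the posterior‑to‑predictive map, and hence on identifiability of Poisson mixtures together with strict monotonicity of $\lambda_n$; non‑degeneracy of $\pi_\alpha$ is used exactly here, to ensure the non‑constant likelihood ratio $(\beta+\alpha)/(1-\alpha)$ is not $\pi_\alpha$‑a.s.\ constant. A more conceptual alternative proceeds by contraposition through \Cref{teo:mixed_pred}: were the predictive to depend on $\bm Z$ only through $n$ and $k$, then $\Psi$ would be a mixed Poisson or a mixed binomial process; it cannot be mixed binomial, since $\nu_\alpha$ has infinite total mass for every $\alpha$, whence $\Psi$ has infinitely many atoms a.s.; and it cannot be mixed Poisson, since for such a process $M_\Psi^{(2)}$ is a constant multiple of $M_\Psi^{(1)}\otimes M_\Psi^{(1)}$, whereas here, after integrating out the diffuse component $G_0$, the $s$‑densities of these measures yield $\E_\alpha[t_1^\alpha t_2^\alpha]=c\,\E_\alpha[t_1^\alpha]\,\E_\alpha[t_2^\alpha]$ for all $t_1,t_2>0$ (with $t_i=(1-s_i)/s_i$); evaluating at $t_1=t_2=1$ forces $c=1$, hence $\E_\alpha[t^{2\alpha}]=\E_\alpha[t^\alpha]^2$, i.e.\ $\mathrm{Var}_\alpha(t^\alpha)=0$ for every $t>0$, i.e.\ $\alpha$ is a.s.\ constant — contradicting the hypothesis on $\pi_\alpha$.
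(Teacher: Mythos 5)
Your proof is correct, and its first half — conditioning on $\alpha$, invoking \Cref{thm:james} for the conditional marginal and predictive, and reading off the posterior $\pi_{\alpha\mid\bm Z}(\dd a)\propto e^{-\varphi_n(a)}\prod_{\ell=1}^k B(m_\ell-a,\,n-m_\ell+\beta+a)\,\pi_\alpha(\dd a)$ together with $Z^\prime_{n+1}\mid(\bm Z,\alpha)$ Poisson-distributed in number with mean $\lambda_n(\alpha)=\gamma B(1-\alpha,n+\beta+\alpha)$ — is exactly the route taken in the paper. Where you genuinely go beyond it is the final step. The paper stops at the assertion that the dependence on $n$, $k$ and $m_1,\ldots,m_k$ ``is evident'' and, notably, never invokes the non-degeneracy of $\pi_\alpha$; strictly speaking the posterior computation only shows that the predictive depends on the sample through \emph{at most} $(n,k,\bm m)$ and not through the labels. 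Your identifiability argument supplies the missing half: the law of $Z^\prime_{n+1}$ determines $q_{K^\prime}$, whose generating function is the Laplace transform of the posterior law of $\lambda_n(\alpha)$, and strict monotonicity of $\alpha\mapsto\lambda_n(\alpha)$ (via the digamma inequality $\psi(n+\beta+\alpha)>\psi(1-\alpha)$) then pins down the posterior of $\alpha$ itself; the explicit non-constant likelihood ratios $(\beta+\alpha)/(1-\alpha)$ and $\{\gamma B(1-\alpha,1+\beta+\alpha)\}^{-1}$ are where non-degeneracy is actually used. This is the step a careful reader would demand, and your version is the more complete proof. The alternative contrapositive route through \Cref{teo:mixed_pred} — excluding the mixed binomial case because $\nu_\alpha$ has infinite mass, and the mixed Poisson case because $M_\Psi^{(2)}=c\,M_\Psi^{(1)}\otimes M_\Psi^{(1)}$ would force $\mathrm{Var}(t^\alpha)=0$ for all $t>0$ — is also sound and conceptually attractive, though it still needs your explicit computation to rule out dependence on anything beyond $(n,k,\bm m)$.
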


To the best of our knowledge, there is no conjugate prior for $\alpha$. However, inference under this model is straightforward by means of Markov chain Monte Carlo algorithms. 
Indeed, conditionally to $\alpha$, marginal, posterior, and predictive distributions are readily available, and the posterior distribution of $\alpha$ has a simple density function that is amenable to posterior simulation algorithms.

\section{Proofs of Section \ref{app:examples_bayesian_analysis}}\label{app:examples_computation}

\subsection{The Poisson process prior: proof of \Cref{thm:james}}

\noindent \emph{Proof of point (i) of \Cref{thm:james}}.
The marginal distribution of $\bm Z$ follows from specializing \Cref{thm:marg}. 
In particular, we first observe that $\tilde m_{\xi}^{(k)}(\dd \bm x^*) = \prod_{\ell=1}^k G_0(\dd x^*_\ell)$ and $\rho^{(k)}(\dd \bm s \mid \bm x^*) = \prod_{\ell=1}^k \rho(\dd s_\ell\mid x^*_\ell)$. 
Moreover, since $\Psi$ is a Poisson process, by \Cref{lem:poi}, it holds $\Psi^!_{\bm x^*, \bm s} \dequal \Psi$. Consequently, the expected value in the marginal expression of \Cref{thm:marg} equals
\begin{equation}\label{eq:ev_marginal_poisson}
\E\left\{ e^{\int_{\X \times (0, 1]} n \log(1 - t) \Psi^!_{\bm x^*, \bm s}(\dd z \, \dd t)} \right\} = \exp\left[ - \int_{\X \times (0, 1]} \{1 - (1 - s)^n\} \rho(\dd s\mid x) G_0(\dd x) \right].    
\end{equation}
It is easy to verify that $\int_{\X \times (0, 1]} \{1 - (1 - s)^n\} \rho(\dd s\mid x) G_0(\dd x)$ equals $\varphi_n$, where $\varphi_n$ is defined in \eqref{eq:phi_def}. The resulting marginal distribution recovers the marginal expression found in \cite[Proposition 3.1]{Jam(17)}.

\noindent \emph{Proof of point (ii) of \Cref{thm:james}}.
For the posterior distribution of $\mu$, expressed in \Cref{thm:post}, we need to determine the law of the vector $\bm S^* = (S^*_1,\ldots,S^*_k)$ and the law of $\mu^\prime$, conditionally to $\bm S^*$. From point (i) of \Cref{thm:post} and \eqref{eq:ev_marginal_poisson}, the $S^*_\ell$'s are independent with marginal laws $f_{S^*_\ell}(\dd s) \propto s^{m_\ell}(1 - s)^{n-m_\ell} \rho(\dd s\mid x^*_\ell)$. Moreover, since $\Psi^!_{\bm x^* , \bm s^*}$ is a Poisson point process with intensity $\rho(\dd s\mid x) G_0(\dd x)$, simple algebra applied to \eqref{eq:post_laplace_main} leads to recognizing
\[
        \E\left\{ e^{- \int f(z) \mu^\prime(\dd z)} \mid \bm S^* = \bm s^* \right\} = \exp\left\{ - \int_{\X \times (0, 1]} (1 - e^{-s f(x)})(1 - s)^n  \rho(\dd s\mid x) G_0(\dd x) \right\},
\]
that is the Laplace functional of a \textsc{crm} with L\'evy measure $(1 - s)^n \rho(\dd s\mid x) G_0(\dd x)$, thus $\Psi^\prime$ is a Poisson process with the same mean measure. Further note that $\mu^\prime$ is independent of $\bm S^*$.

\noindent \emph{Proof of point (iii) of \Cref{thm:james}}.
To obtain the predictive distribution of $Z^\prime_{n+1}$, we observed in \Cref{cor:pred} how $Z^\prime_{n+1}$ is obtained by first thinning the process $\Psi^\prime$ with retention probability $p(x, s) = s$, and then discarding the second component. 
Since $\Psi^\prime$ is a Poisson process with intensity $(1 - s)^n \rho(\dd s\mid x) G_0(\dd x)$, the thinned process is still Poisson with intensity $s(1 - s)^n \rho(\dd s\mid x) G_0(\dd x)$ and the resulting $Z^\prime_{n+1}$ is a Poisson process on $\X$ with intensity  $\int_{(0,1]} s(1 - s)^n \rho(\dd s\mid x) G_0(\dd x)$.

\subsection{The mixed Poisson process prior: proof of \Cref{thm:mixed_poisson_bayesian}}

\noindent \emph{Proof of point (i) of \Cref{thm:mixed_poisson_bayesian}}.
The marginal distribution of $\bm Z$ follows from exploiting that $\Psi\mid \gamma \sim \mathrm{PP}(\gamma\nu)$, for which the marginal law is given in point (i) of \Cref{thm:james}, and then integrating out the variable $\gamma$.

\noindent \emph{Proof of point (ii) of \Cref{thm:mixed_poisson_bayesian}}.
For the posterior distribution of $\mu$, expressed in \Cref{thm:post}, we need to determine the law of the vector $\bm S^* = (S^*_1,\ldots,S^*_k)$ and the law of $\mu^\prime$, conditionally to $\bm S^*$. To this end, it is convenient to exploit the disintegration of the law of $\Psi$ into $\Psi\mid \gamma$ and $\gamma$. First, as for the posterior distribution of $\Psi\mid \gamma$, since $\Psi\mid \gamma \sim \mathrm{PP}(\gamma \nu)$, we resort to point (ii) of \Cref{thm:james}. Specifically, such a posterior is equal in distribution to $\Psi^\prime + \sum_{\ell = 1}^k {\delta_{(x^*_\ell, S^*_\ell)}}$, where $\Psi^\prime\mid \gamma \sim \mathrm{PP}(\gamma (1-s)^n \nu(\dd x\, \dd s))$ and $S^*_\ell \mid \gamma$ are independent random variables, further independent of $\Psi^\prime\mid \gamma$, with laws $f_{S^*_\ell}(\dd s) \propto s^{m_\ell}(1 - s)^{n-m_\ell} \rho(\dd s\mid x^*_\ell)$. Since the laws of $S^*_\ell \mid \gamma$ do not depend on $\gamma$, then the $S^*_\ell$'s and $\Psi^\prime\mid \gamma$ are independent. Second, the posterior distribution of $\gamma$ is obtained from the likelihood in point (i) of \Cref{thm:james} and the prior $f_\gamma$, thus $\gamma\mid \bm Z \sim f_{\tilde{\gamma}}$, with $f_{\tilde{\gamma}}(\dd \gamma) \propto e^{-\gamma \varphi_n} \gamma^k f_\gamma(\dd \gamma)$. The thesis in point (ii) follows.

\noindent \emph{Proof of point (iii) of \Cref{thm:mixed_poisson_bayesian}}.
To describe the predictive distribution for $Z^\prime_{n+1}$ we proceed as follows: first consider the thinning of the process $\Psi^\prime$ with retention probability $p(x, s) = s$, and then discard the second component. 
Since $\Psi^\prime$ is a mixed Poisson process $\mathrm{MP}((1 - s)^n \nu(\dd x \, \dd s), f_{\tilde\gamma})$, the thinned process is still a mixed Poisson $\mathrm{MP}(s(1 - s)^n \nu(\dd x \, \dd s), f_{\tilde{\gamma}})$ and the resulting $Z^\prime_{n+1}$ is a mixed Poisson process (on $\X$) distributed as $\mathrm{MP}( \int_{(0,1]} s(1 - s)^n \nu(\dd x \, \dd s), f_{\tilde{\gamma}})$.

\subsection{The mixed binomial process prior: proof of \Cref{thm:mixed_binom_bayesian}}

\noindent \emph{Proof of point (i) of \Cref{thm:mixed_binom_bayesian}}.
The marginal distribution of $\bm Z$ is recovered from \Cref{thm:marg} as follows. 
First, it is straightforward to see that $\tilde m^{(k)}_{\xi}(\bm x) = \E(M^{(k)})  G_0^k(\bm x)$ and $\rho^{(k)}(\dd \bm s \mid \bm x) = \prod_{\ell=1}^k \rho(\dd s_\ell \mid x_\ell)$ satisfy \eqref{eq:mpsi_disinteg}. Second, from \Cref{prop:palm_mb}, we have that if $\Psi$ is a mixed binomial process $\mathrm{MB}(\nu, q_M)$, then $\Psi^!_{\bm x^* ,\bm s}$ is a mixed binomial process $\mathrm{MB}(\nu, q_{\tilde{M}})$ with $q_{\tilde{M}}(m) = q_M(m+k) (m + k)!/(\E(M^{(k)}) m!)$. Consequently, the expected value in the marginal expression of \Cref{thm:marg} equals
\begin{equation}\label{eq:ev_marginal_mixed_bin}
\begin{aligned}
    &\E\left\{ e^{ \int_{\X \times (0, 1]} n \log(1 - t) \Psi^!_{\bm x^*, \bm s}(\dd z \, \dd t)} \right\} = \E\left[ \exp\left\{  n \textstyle \sum_{j=1}^{\tilde{M}} \log(1 - \tilde Sj)\right\} \right] \\ 
        & = \E \left[\left\{\int_{\X\times (0,1]} (1 - s)^{n} \nu(\dd x \, \dd s)\right\}^{\tilde{M}}\right] = \mathcal{G}_{\tilde M}(\kappa_n),
\end{aligned}
\end{equation}
where $\tilde{M}$ has probability mass function $q_{\tilde{M}}$ and $(\tilde{X}_j, \tilde{S}_j) \iid \nu$, for $j=1,\ldots,\tilde{M}$; moreover, $\mathcal{G}_{\tilde M}(z) = \E(z^{\tilde M})$ is the probability-generating function of $\tilde M$ and $\kappa_n = \int_{\X\times (0,1]} (1 - s)^{n} \nu(\dd x \, \dd s)$.
It follows that the marginal distribution in \Cref{thm:marg} boils down to the expression in point (i) of the statement.

\noindent \emph{Proof of point (ii) of \Cref{thm:mixed_binom_bayesian}}.
For the posterior distribution of $\mu$, expressed in \Cref{thm:post}, we need to determine the law of the vector $\bm S^* = (S^*_1,\ldots,S^*_k)$ and the law of $\mu^\prime$, conditionally to $\bm S^*$. From point (i) of \Cref{thm:post} and \eqref{eq:ev_marginal_mixed_bin}, the $S^*_\ell$'s are independent with marginal laws $f_{S^*_\ell}(\dd s) \propto s^{m_\ell}(1 - s)^{n-m_\ell} \rho(\dd s\mid x^*_\ell)$, as for the Poisson case. Moreover, from point (ii) of \Cref{thm:post} we have that for any measurable function $g: \X\times (0,1] \to \R_+$,
\begin{equation*}
    \begin{aligned}
        \mathcal{L}_{\Psi^\prime}(g) &=  \mathcal{L}_{\Psi^!_{\bm x^*, \bm s^*}}(g(x,s) - n\log(1-s))/\mathcal{L}_{\Psi^!_{\bm x^*, \bm s^*}}(- n\log(1-s)) \\
        &= \mathcal{G}_{\tilde{M}}\left( \int_{\X\times (0,1]} \exp\left\{ - g(x,s) + n \log(1-s)\right\} \nu(\dd x \, \dd s) \right)/  \mathcal{G}_{\tilde{M}}(\kappa_n) \\
        &= \mathcal{G}_{\tilde{M}}\left( \int_{\X\times (0,1]} e^{- g(x,s)} (1-s)^n \nu(\dd x \, \dd s) \right) /\mathcal{G}_{\tilde{M}}(\kappa_n) \\
        &= \mathcal{G}_{M^\prime}\left( \int_{\X\times (0,1]} e^{- g(x,s)} (1-s)^n \nu(\dd x \, \dd s)/\kappa_n \right),
    \end{aligned}
\end{equation*}
where the second equality follows from the fact that $\Psi^!_{\bm x^* ,\bm s^*}$ is a mixed binomial process $\mathrm{MB}(\nu, q_{\tilde{M}})$; moreover, $M^\prime$ is a nonnegative integer-valued random variable with probability mass function $q_{M^\prime}$ with $q_{M^\prime}(m) \propto q_{\tilde{M}}(m) \kappa_n^m \propto  \kappa_n^m q_M(m+k) (m + k)!/m! $. Therefore, $\Psi^\prime$ is a mixed binomial process $\mathrm{MB}((1 - s)^n \nu(\dd x \, \dd s), q_{M^\prime})$ and it is independent of $\bm S^*$.

\noindent \emph{Proof of point (iii) of \Cref{thm:mixed_binom_bayesian}}.
To describe the predictive distribution of $Z_{n+1}$ remind that $Z^\prime_{n+1}$ is obtained by first thinning the process $\Psi^\prime$ with retention probability $p(x, s) = s$, and then discarding the second component, as described in \Cref{cor:pred}. By \Cref{prop:mbp_thin}, since $\Psi^\prime$ is a mixed binomial process, the thinned process is still mixed binomial, specifically $\mathrm{MB}(s(1-s)^n \nu(\dd x \, \dd s), q_{K^\prime})$, with 
\begin{equation*}
    \begin{aligned}
       q_{K^\prime}(m) &= \sum_{z \geq m} \binom{z}{m} q_{M^\prime}(z) c_p^{m} (1 - c_p)^{z-m}\\
        &\propto \sum_{z \geq m} \binom{z}{m} \kappa_n^z q_M(z+k) 
        c_p^{m} (1 - c_p)^{z-m} (z + k)!/z! ,
    \end{aligned}
\end{equation*}
where $c_p = \int_{\X\times (0,1]} s (1 -s)^n \nu(\dd x \, \dd s) / \int_{\X\times (0,1]} (1 -s)^n \nu(\dd x \, \dd s)$. Removing the second component of the retained points, we obtain $Z^\prime_{n+1}$, which is then a mixed binomial $\mathrm{MB}(\int_{(0,1]} s (1 -s)^n \nu(\dd x \, \dd s), q_{K^\prime})$.

\subsection{Proof of \Cref{prop:frequency_dependence}}
    Conditionally to $\alpha$, the posterior distribution of $\mu$ and the marginal law of the sample $\bm Z$ are as in \Cref{thm:james}. Moreover, the posterior law of $\alpha$ is given by
    \begin{equation}\label{eq:post_alpha}
         \pi_{\alpha \mid \bm Z}(\dd a) \propto \exp\{- \varphi_n(a) \} \prod_{\ell=1}^k B(m_\ell - a, n - m_\ell + \beta + a)  \times \pi_\alpha(\dd a)
    \end{equation}
    where $\varphi_n(a) = \gamma \int_{(0,1]} \{1 - (1 - s)^n\}  s^{-1 - a} (1 - s)^{\beta + a - 1}\dd s = \sum_{h=0}^{n-1} B(-a +1, h + \beta +a)$.

    From the posterior distribution of $\mu$ given $\alpha$ and the posterior density of $\alpha$ in \eqref{eq:post_alpha}, it is evident that the predictive distribution for the newly discovered features depends on the observed sample through $n$, $k$ and $m_1,\ldots, m_k$, while it does not depend on the labels $x^*_1,\ldots,x^*_k$.

\end{document}